\DeclareMathOperator{\Aut}{Aut}
\DeclareMathOperator{\Stab}{Stab}
\DeclareMathOperator{\Isom}{Isom}
\DeclareMathOperator{\Res}{Res}
\DeclareMathOperator{\pr}{pr}
\DeclareMathOperator{\Out}{Out}
\renewcommand{\d}{\mathrm d}
\renewcommand{\le}{\leqslant}
\renewcommand{\ge}{\geqslant}
\renewcommand{\epsilon}{\varepsilon}
\renewcommand{\phi}{\varphi}
\renewcommand{\rm}{\mathrm}
\newcommand{\m}{\mathbf}
\newcommand{\tendv}{\xrightarrow[n\rightarrow +\infty]{}}
\newcommand{\dr}{\partial}
\newcommand{\suml}{\sum\limits}
\newcommand{\id}{\mathrm{id}}
\newcommand{\bra}{\langle}
\newcommand{\ket}{\rangle}
\newcommand{\mc}{\mathcal}
\newcommand{\op}{\mathrm{op}}
\newcommand{\lk}{\mathrm{lk}\:}
\newcommand{\proj}{\mathrm{proj}}
\newcommand{\Conv}{\mathrm{Conv}}
\newcommand{\inv}{^{-1}}
\theoremstyle{definition}
\newtheorem{dfn}{Definition}[section]
\theoremstyle{plain}
\newtheorem{theo}[dfn]{Theorem} 
\newtheorem{pro}[dfn]{Proposition}
\newtheorem{lem}[dfn]{Lemma}
\newtheorem{cor}[dfn]{Corollary}
\newtheorem{prodfn}[dfn]{Proposition-Definition}
\newtheorem{theointro}{Theorem}
\newtheorem{corintro}[theointro]{Corollary}
\theoremstyle{remark}
\newtheorem{rem}[dfn]{Remark}
\newtheorem{ex}[dfn]{Example}
\title[Boundaries, buildings and rigidity]{Poisson boundaries of building lattices and rigidity with hyperbolic-like targets}
\author{Antoine Derimay}
\date{}
\begin{document}
\begin{abstract}
    We prove that a Poisson boundary of any regular thick Euclidean building, as well as lattices thereof is the space of chambers at infinity of the building with the harmonic measure class. We then use this result to generalize rigidity results of Guirardel-Horbez-Lécureux on morphisms and cocycles from lattices in buildings to groups with negative curvature properties. 
\end{abstract}
\maketitle

\tableofcontents
\section{Introduction}
Ever since the celebrated result of Mostow stating that hyperbolic manifolds of dimension at least $3$ are determined by their fundamental group \cite{Mos73}, followed by Margulis' superrigidity and normal subgroup theorems (see \cite{Zim84}), rigidity theory has been at the forefronts of geometric group theory. This work still continues to this day, for instance and most notably through the Zimmer program whose philosophy is that groups of high rank cannot act on manifolds of low dimensions. To this end, Zimmer's cocycle superrigidity theorem (see \cite{Zim84}), generalizing Margulis' superrigidity, is of crucial importance, highlighting the usefulness of cocycles in rigidity theory.

All the theorems mentioned above also make use of \emph{boundaries} in one way or another, which are geometric and/or measured objects encompassing the large-scale behaviour of the studied group. Originally studied by Furstenberg \cite{Fur63} in the context of semisimple Lie groups, boundary theory has been a central point of rigidity theory ever since, being studied (amongst others) by Margulis \cite{Mar91}, Zimmer \cite{Zim84}, Kaimanovich \cite{Kai00}, Bader-Shalom \cite{BS06}, and in more recent years by Bader and Furman \cite{BF14}. It is still a relevant object of study, providing a useful framework for proving rigidity results, as in \cite{GHL22} and \cite{BCFS24} for instance. However, boundaries are best understood in the setting of higher rank semisimple Lie groups, or higher rank semisimple algebraic groups over local fields, where most notions of boundary for this group turn out to be (closely related to) the flag variety $G/P$. 

One such notion of boundary is the Poisson-Furstenberg boundary. Given an irreducible aperiodic random walk $P$ on a countable set $X$, the \emph{Poisson-Furstenberg boundary} $B_{PF}(X,P)$ is a measured object which encompasses the large-scale behaviour of the walk, as well as the bounded harmonic functions via the \emph{Poisson formula} \cite{Woe00}. The case mostly studied in geometric/measured group theory is that of a group with a walk of the form $P(g,h)=\mu(g\inv h)$ for some measure $\mu$ on the group. The study of those Poisson boundaries has been a subject of its own right, with for instance results by Furstenberg (\cite{Fur73}, Kaimanovich-Vershik \cite{KV83} and Rosenblatt \cite{Ros81}) stating that a lcsc group $G$ is amenable if and only if there is some measure $\mu$ of full support for which $B_{PF}(G,\mu)$ is trivial. 

In some cases the Poisson boundary of the walk is somewhat easy to compute, and one might expect to retrieve informations about the Poisson boundary of walks on groups which act on the walk. We give here a tool that does just that, under the hypothesis that the group is big enough (see Theorem \ref{theo: inducedRW} and Proposition \ref{pro: integrability results of walk on lattice}).

\begin{theointro}\label{theointro: inducedRW}
    Let $(X,P)$ be an irreducible random walk on a countable discrete space, and $\Gamma<\Aut(X,P)$ be a discrete subgroup. If $\Gamma$ is a lattice in $X$, there is an admissible measure $\mu$ on $\Gamma$ such that
    $$B_{PF}(X,P)\simeq B_{PF}(\Gamma,\mu),$$
    where the isomorphism is $\Gamma$-equivariant.
    Moreover, $\mu$ is symmetric whenever $P$ is, and if $\Gamma$ is uniform then $\mu$ has a finite exponential moment.
\end{theointro}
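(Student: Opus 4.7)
The plan is a classical discretization by return times, familiar in the theory of random walks on groups acting on spaces (Furstenberg's induction in the Lie-group setting, Kaimanovich's and others' work in the discrete setting). Fix a fundamental domain $F\subset X$ for the $\Gamma$-action and a basepoint $x_0\in F$; let $(X_n)_{n\ge 0}$ denote the $P$-walk started at $x_0$ and $\tau=\inf\{n\ge 1 : X_n\in\Gamma x_0\}$ its first positive return to the orbit. The lattice assumption should make the induced Markov chain on the finite-covolume quotient $\Gamma\backslash X$ positive recurrent (this uses irreducibility of $P$ together with finiteness of the natural $\Aut(X,P)$-invariant measure restricted to a fundamental domain), so that $\tau<\infty$ almost surely. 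Pick $\gamma_1\in\Gamma$ with $\gamma_1 x_0=X_\tau$, randomizing uniformly over the coset $\gamma_1\Stab_\Gamma(x_0)$ when the stabilizer is nontrivial, and define $\mu$ as the law of $\gamma_1$. Irreducibility of $P$ then translates into admissibility of $\mu$ (every $\gamma\in\Gamma$ appears as a first-return element for some excursion), and if $P$ is reversible with respect to its stationary measure, time-reversal of excursions combined with the stabilizer randomization yields $\mu(\gamma)=\mu(\gamma^{-1})$.

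For the Poisson boundary identification, iterating the construction at the successive return times $(\tau_n)_{n\ge 1}$ produces increments $(\gamma_n)_{n\ge 1}$ which, by the strong Markov property and $\Gamma$-invariance of $P$, are i.i.d.\ of law $\mu$, with partial products satisfying $X_{\tau_n}=\gamma_1\cdots\gamma_n\cdot x_0$. Since $\tau_n\to+\infty$ almost surely and the excursions between consecutive returns are, conditionally on the sequence of endpoints, independent and tail-trivial, the tail $\sigma$-algebras of $(X_n)$ and $(X_{\tau_n})$ coincide. This provides a $\Gamma$-equivariant identification $B_{PF}(X,P)\simeq B_{PF}(\Gamma,\mu)$, the equivariance being inherited from the orbit embedding $\gamma\mapsto \gamma x_0$.

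When $\Gamma$ is uniform, $F$ is finite so the projected chain lives on a finite state space; standard finite-Markov-chain theory gives exponential tails for $\tau$. A Svarc--Milnor-type comparison between any word metric on $\Gamma$ and the graph distance on $\Gamma x_0$ (coming from cocompactness of the action), combined with a linear bound of $d_X(x_0,X_\tau)$ in terms of $\tau$ (which holds as soon as $P$ has bounded step-size, the case for the walks on buildings considered in the paper), then yields the finite exponential moment for $\mu$. The main difficulty, I expect, lies in the Poisson boundary identification: one has to verify carefully that no tail information is lost in passing from $(X_n)$ to $(X_{\tau_n})$ and that the stabilizer randomization interacts correctly with the $\Gamma$-equivariant measured structure on the boundary. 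The non-uniform case additionally requires a careful unpacking of ``lattice in $X$'' in the countable setting to ensure $\tau<\infty$ almost surely; here one should use an $\Aut(X,P)$-invariant measure of finite mass on $\Gamma\backslash X$ to invoke positive recurrence of the quotient chain.
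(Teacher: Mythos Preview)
Your outline is correct and matches the paper closely on most steps: the recurrence of the orbit via positive recurrence of the quotient chain (using the finite-mass stationary measure coming from the lattice assumption), the definition of $\mu$ by pulling back the induced walk on $\Gamma x_0$ with uniform randomization over the stabilizer, and the exponential moment in the uniform case via exponential tails of the return time on a finite state space combined with a linear bound on displacement.

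The one substantive difference is in the Poisson boundary identification. You propose to compare tail $\sigma$-algebras of $(X_n)$ and $(X_{\tau_n})$ directly, arguing that the excursions between returns carry no tail information; you correctly flag this as the delicate point. The paper sidesteps this entirely by working instead with bounded harmonic functions: it shows that restriction $h\mapsto h|_{\Gamma x_0}$ is an isomorphism $H^\infty(X,P)\to H^\infty(\Gamma x_0,Q)$, with explicit inverse $h\mapsto \tilde h(x)=\mathbb{E}_x[h(Z_{\tau_0})]$. Both directions are one-line applications of Doob's optional stopping theorem to the bounded martingale $(h(Z_n))$. This is cleaner than your route because it avoids any measurable-structure subtleties on the path space, and it makes the $\Gamma$-equivariance transparent (restriction and extension are manifestly equivariant). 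Your approach would work, but the harmonic-function translation is what turns the ``main difficulty'' you anticipate into a triviality.
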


This result is a simplified version of the famous Lyons-Sullivan procedure \cite{LS84} in the discrete setting. 

Here by \emph{lattice} in $X$ we mean a discrete subgroup of $\Aut(X)$ such that for some (equivalently any) fundamental domain $D$ for the action of $\Gamma$, 
$$\sum_{x\in D}\frac1{|\Gamma_x|}<\infty.$$
The proof of this proposition then comes from an explicit construction of a stationary distribution on $\Gamma\backslash X$.\medskip

Due to the founding work of Bruhat and Tits \cite{BT72}, \cite{BT84}, there is a deep link between reductive groups over local fields and Euclidean buildings. These are simplicial objects, which locally look like tilings of the Euclidean plane glued together. They have an associated natural CAT(0) metric, and are contractible. A Euclidean building of dimension $1$ is a tree without leaves, for which a lot can happen. In contrast, due to a result of Tits \cite{Tit86}, any irreducible Euclidean building in dimension at least $3$ actually comes from the Bruhat-Tits construction. In dimension $2$ however, the situation is mixed. There exist irreducible buildings which do not come from the Bruhat-Tits construction, but which still have cocompact automorphism groups \cite{Ron86}, \cite{Bar00}. These groups and their lattices still inherit many of the properties verified by higher rank semisimple groups over local fields \cite{KL97,KW14,LSW23,BFL23,Opp24}, showing that buildings are a natural generalization of algebraic groups. However, they differ from algebraic groups in a crucial point: they are not linear \cite{BCL19}, and they are even conjectured to not be residually finite, although this has for now been proven in only one example \cite{TMW23}, which is of type $\tilde C_2$. Alongside the normal subgroup theorem, which was proven in \cite{BFL23} only for type $\tilde A_2$, this would imply that any exotic building lattice is simple, providing numerous examples of finitely presented simple groups, a task that has been historically very hard to accomplish, and these groups would moreover enjoy higher rank rigidity properties of the classical case.

In this case too, the study of boundary actions is a key part of all the proofs of rigidity. In \cite{BF14}, following \cite{BM02}, Bader and Furman define a \emph{strong boundary} of a locally compact second countable group $\Gamma$ to be some measured space $(B,\nu)$ such that $\Gamma$ acts by measure-class preserving automorphisms, amenably and such that the diagonal action $\Gamma\curvearrowright B^2$ is isometrically ergodic. 

If $\Gamma$ is a lattice in some thick irreducible Euclidean building $X$ in the sense of Definition \ref{dfn: réseau dans un graphe}, one might expect that some boundary of $X$ would also be a boundary of $\Gamma$. Although a natural boundary to consider would be the Gromov boundary of the building, a geometric object which has a natural structure of spherical building, it turns out that the set $\Omega$ of its chambers corresponds to the Poisson-Furstenberg boundary in the classical case. It can be endowed with a family of natural measures, called harmonic measures, which are absolutely continuous with respect to each other. The $\Gamma$-action on $\Omega$ is then measure-class preserving, and we prove that it is a strong boundary of $\Gamma$. To do so, we exploit \cite[Theorem 2.7]{BF14}, telling us that any Poisson-Furstenberg boundary of a group is a strong boundary.\medskip

Using Theorem \ref{theointro: inducedRW}, we give the following description of one of the Poisson-Furstenberg boundaries of a building and of its lattices (see Theorems \ref{theo: Poisson boundary of building} and \ref{theo: discretization of walk}): 
\begin{theointro}\label{theo: intro BPF(X & Gamma)}
    Let $X$ be a regular thick Euclidean building, and let $\Gamma$ be a lattice in $X$. For any symmetric irreducible isotropic bounded range random walk $P$ on $X$, there exists a symmetric generating measure $\mu$ on $\Gamma$ with finite first moment such that 
    $$B_{PF}(\Gamma,\mu)=B_{PF}(X,P)=(\Omega,(\nu_x)_x),$$
    where $(\Omega,(\nu_x)_x)$ is the set of chambers at infinity of $X$, along with the harmonic measures. If moreover $\Gamma$ is uniform, $\mu$ has a finite exponential moment.
\end{theointro}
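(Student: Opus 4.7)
The plan is to combine two intermediate results announced in the introduction: the identification of the Poisson-Furstenberg boundary $B_{PF}(X,P)$ with the chambers at infinity carrying harmonic measures (this would be the content of Theorem \ref{theo: Poisson boundary of building}), and the discretization procedure of Theorem \ref{theointro: inducedRW} together with the moment estimates of Proposition \ref{pro: integrability results of walk on lattice}, which transfers the identification from $X$ down to $\Gamma$.

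For the first step, I would work inside $\Aut(X,P)$ and exploit the fact that $P$ is symmetric, isotropic and of bounded range. I would first show that, for every vertex $x$, the sample path $(Z_n^x)$ almost surely drifts to infinity and converges to a chamber $\omega_\infty^x \in \Omega$; the convergence follows from geodesic-tracking in CAT(0) spaces combined with isotropy (which forces the limiting direction to be generic and hence to determine a unique chamber). Let $\nu_x$ be the law of $\omega_\infty^x$; it is $P$-stationary and, by unique solvability of the Dirichlet problem at $\Omega$, lies in the harmonic measure class, so $(\Omega,(\nu_x))$ is a $P$-boundary. For maximality, I would use Kaimanovich's strip criterion: two independent walks converging to $\omega_\pm \in \Omega$ almost surely produce a pair of opposite chambers, and the strip joining finite portions of the two tracking rays lies in the unique apartment determined by $(\omega_+,\omega_-)$, hence has polynomially bounded cardinality. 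Paired with positivity of the entropy of $P$, this yields that $(\Omega,(\nu_x))$ is the full Poisson-Furstenberg boundary.

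For the second step, $\Gamma$ is by hypothesis a lattice in $X$, hence a discrete subgroup of $\Aut(X,P)$ satisfying the lattice summability condition. Theorem \ref{theointro: inducedRW} then produces an admissible $\mu$ on $\Gamma$ with $B_{PF}(\Gamma,\mu)\simeq B_{PF}(X,P)$ as $\Gamma$-spaces, symmetric because $P$ is. Composing with the identification from the first step gives $B_{PF}(\Gamma,\mu)=(\Omega,(\nu_x))$. The moment statements come from Proposition \ref{pro: integrability results of walk on lattice}: the explicit stationary distribution on $\Gamma\backslash X$ used in the discretization yields finite first moment for $\mu$ in general, and a finite exponential moment when $\Gamma$ is cocompact, since the return time to a fundamental domain can then be dominated by a geometric random variable.

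The main obstacle is the maximality step within the first part. Convergence to $\Omega$ and stationarity of $\nu_x$ are standard once drift is established, but proving that $(\Omega,(\nu_x))$ admits no strictly larger measurable extension requires entropy positivity together with a geometric control of the strip spanned by pairs of opposite chambers inside the apartment, and more care is needed than in the tree case because $X$ has dimension at least $2$ and the "strip" is genuinely higher-dimensional.
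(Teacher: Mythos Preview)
Your second step (discretization from $X$ to $\Gamma$ via Theorem~\ref{theointro: inducedRW}, plus the moment estimates of Proposition~\ref{pro: integrability results of walk on lattice}) matches the paper exactly.

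Your first step, however, takes a genuinely different route. The paper does \emph{not} use the strip criterion. Instead, it invokes the description of the Martin compactification of $X$ due to R\'emy--Trojan \cite{RT21}: since $X$ is thick the spectral radius is strictly less than $1$, so their ``above the spectrum'' case identifies the Martin boundary with the join of the visual and Furstenberg compactifications. General Markov-chain theory (\cite{KKS66}) then says that the walk converges almost surely in the Martin compactification and that the Poisson boundary is the hitting distribution there. A separate result of Parkinson pins the limiting direction inside the open fundamental chamber, so the hitting measure is supported on a copy of $\Omega$; a direct isotropy computation (not a Dirichlet-problem argument) then shows that the hitting measure on each $\Omega_o(x)$ equals $1/N_\lambda$, hence coincides with $\nu_o$.

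The comparison is worth noting. Your strip-criterion approach is the natural one by analogy with trees and symmetric spaces, and indeed the paper acknowledges that \cite{Kai05} and \cite{MZ03} point in this direction. But there is a real gap in your sketch: Kaimanovich's ray/strip criteria are formulated for random walks on \emph{groups}, where one has Avez entropy and the conditional-entropy machinery. Here $(X,P)$ is a Markov chain on a state space with no group structure; the versions of the entropy criterion that exist for general Markov operators are considerably more delicate, and you have not indicated which one you would use or why its hypotheses are met. Likewise, ``unique solvability of the Dirichlet problem at $\Omega$'' is asserted but not justified, and in higher rank it is not automatic. The paper's route trades these analytic difficulties for the heavy input of \cite{RT21}, after which everything reduces to soft arguments. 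Your route might be completable, but as written the maximality step is a plan rather than a proof.
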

The first equality is an application of Theorem \ref{theointro: inducedRW}, which in this context resembles closely work done in \cite{Hae20}.

The second equality comes from a description of the Poisson-Furstenberg boundary for $(X,P)$ given in Theorem \ref{theo: intro BPF(X & Gamma)}. This description of the Poisson boundary was expected, with many earlier results pointing in this direction (\cite{Kai05,MZ03} for instance). We provide a proof here, building on the deep work of Rémy and Trojan \cite{RT21} describing the \emph{Martin boundary} of a building.

This discretization process was not known, even in the case of semisimple algebraic groups and their lattices acting on their Bruhat-Tits building. In that case, $(\Omega,\nu)$ corresponds to the flag variety $G/P$ for a minimal parabolic subgroup $P$ of $G$, with the Haar measure class. In this case, Theorem \ref{theo: intro BPF(X & Gamma)} can thus be written as follows:

\begin{corintro}
    Let $G$ be a product of semisimple algebraic groups over non-archimedean local fields. Let $\Gamma<G$ be a lattice, $P<G$ be a minimal parabolic subgroup, and $\nu$ be the Haar measure class on $G/P$. Then there exists a symmetric measure $\mu$ on $\Gamma$ such that 
    $$B_{PF}(\Gamma,\mu)=(G/P,\nu).$$
    Moreover, the measure $\mu$ always has finite first moment, and a finite exponential moment if $\Gamma$ is uniform.
\end{corintro}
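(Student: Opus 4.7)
The plan is to apply Theorem~\ref{theo: intro BPF(X & Gamma)} to the Bruhat--Tits building of $G$, and then identify the space of chambers at infinity together with its harmonic measure class with $(G/P,\nu)$ via the uniqueness of quasi-invariant measure classes on $G/P$. Writing $G=\prod_i G_i$ with each $G_i$ semisimple over a non-archimedean local field $k_i$, I would take $X=\prod_i X_i$ to be the product of the Bruhat--Tits buildings of the factors. This is a regular thick Euclidean building, and the $G$-action realises $\Gamma$ as a discrete subgroup of $\Aut(X)$.

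The first step is to check that $\Gamma$ is a lattice in $X$ in the sense of Definition~\ref{dfn: réseau dans un graphe}. Picking a vertex $x$ with open compact stabiliser $K_x<G$, and unfolding Haar measure along a fundamental domain $D$ for $\Gamma$ acting on the vertex set, one gets
$$\mathrm{vol}(\Gamma\backslash G)=\sum_{x\in D}\frac{\mathrm{vol}(K_x)}{|\Gamma_x|}.$$
Since there are finitely many vertex types and $G$ acts transitively on each, $\mathrm{vol}(K_x)$ takes only finitely many values, so finiteness of $\mathrm{vol}(\Gamma\backslash G)$ is equivalent to $\sum_{x\in D}1/|\Gamma_x|<\infty$. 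Moreover $\Gamma$ is uniform in $G$ iff the action on $X$ is cocompact.

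Next I would pick any symmetric irreducible isotropic bounded range random walk $P$ on $X$ (for instance a simple random walk on the $1$-skeleton built from $\Aut(X)$-invariant data) and apply Theorem~\ref{theo: intro BPF(X & Gamma)}. This produces a symmetric generating measure $\mu$ on $\Gamma$ with finite first moment (finite exponential moment when $\Gamma$ is uniform), together with a $\Gamma$-equivariant identification
$$B_{PF}(\Gamma,\mu)=B_{PF}(X,P)=(\Omega,(\nu_x)_x).$$

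The remaining step, and the only one with any real content, is to identify $(\Omega,(\nu_x)_x)$ with $(G/P,\nu)$. As $G$-sets, the identification $\Omega\simeq G/P$ is classical: $G$ acts transitively on the chambers of the spherical building at infinity, with stabiliser a minimal parabolic. The harmonic measure class is $G$-quasi-invariant: since $P$ can be chosen $\Aut(X)$-invariant, for any $g\in G$ the pushforward $g_*\nu_x$ lies in the class of $\nu_{gx}$, which is the class of $\nu_x$. Since $G/P$ admits a unique $G$-quasi-invariant measure class, this class must be the Haar class, concluding the proof. The delicate point I anticipate is precisely this last identification; everything else is a direct specialisation of the main theorem.
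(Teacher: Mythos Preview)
Your proposal is correct and matches the paper's approach: the corollary is presented in the paper without proof, as an immediate specialisation of Theorem~\ref{theo: intro BPF(X & Gamma)} to the Bruhat--Tits building, together with the classical identification $\Omega\simeq G/P$ and the remark that the harmonic measure class is the Haar class. One minor imprecision: the $G$-quasi-invariance of the harmonic class does not need the random walk at all, since the $\nu_x$ are defined intrinsically from the building and satisfy $g_*\nu_x=\nu_{gx}$ for any automorphism $g$; your detour through the walk $P$ is unnecessary but harmless.
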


\begin{rem}
    This partially answers a question raised in \cite[Remark 1.6]{BCFS24}, in that this gives a description of the Poisson-Furstenberg boundary of a lattice in a product of semisimple algebraic groups over local \emph{non archimedean} fields. The techniques developed here are however not enough to provide a description of the Poisson-Furstenberg boundary of a lattice in a product of semisimple algebraic groups over local fields, including $\m R$. This will be done in an upcoming work, using a more general discretization procedure.
\end{rem}

With this description of the Poisson-Furstenberg boundary, we prove a number of rigidity results using the work of Bader and Furman on boundary theory \cite{BF14}, and the formalism of \emph{geometrically rigid groups} developed in \cite{GHL22}. The notion of a group being geometrically rigid with respect to a discrete countable space has a convoluted definition (see Definition \ref{dfn: geometrically rigid}), but it is made to encompass all the properties a group with negative curvature properties should have, at least for their argument to work. Examples of geometrically rigid groups include hyperbolic groups and relatively hyperbolic groups, but also right-angled Artin groups, mapping-class groups and outer automorphism groups of torsion free hyperbolic groups. We get the following result:

\begin{theointro}\label{theo: intro morphisms}
    Let $\Gamma$ be a lattice in a product of regular higher rank Euclidean buildings. Let $\Lambda$ be a group which is geometrically rigid with respect to $\m D$. Then the image of any morphism $f:\Gamma\to\Lambda$ virtually fixes a point in $\m D$.
\end{theointro}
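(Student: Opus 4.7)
The plan is to combine the identification of the Poisson--Furstenberg boundary provided by Theorem~\ref{theo: intro BPF(X & Gamma)} with the boundary-theoretic framework of Bader--Furman \cite{BF14} and the formalism of geometrically rigid groups of Guirardel--Horbez--L\'ecureux \cite{GHL22}. The very fact that the author designed Theorem~\ref{theo: intro BPF(X & Gamma)} with an integrability statement (finite first moment, symmetry) strongly suggests that these hypotheses are exactly what is needed to feed $(\Omega,\nu)$ into a GHL-type rigidity machine.

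Concretely, I would fix a morphism $f : \Gamma \to \Lambda$, let $\Gamma$ act on $\m D$ through $f$, and apply Theorem~\ref{theo: intro BPF(X & Gamma)} to obtain a symmetric generating measure $\mu$ on $\Gamma$ with finite first moment such that $B_{PF}(\Gamma,\mu) = (\Omega,\nu)$, where $\Omega$ is the space of chambers at infinity of the product building $X = X_1 \times \cdots \times X_k$. By \cite[Theorem 2.7]{BF14}, $(\Omega,\nu)$ is a strong $\Gamma$-boundary: the action $\Gamma\acts \Omega$ is amenable and $\Gamma\acts \Omega\times\Omega$ is isometrically ergodic. Moreover, $\Omega$ splits as a product $\Omega_1\times\cdots\times\Omega_k$ of factor boundaries, and each $\Omega_i$ further decomposes along the parabolic-type facets of a fundamental chamber, giving a rich collection of $\Gamma$-equivariant quotients.

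I would then combine the amenability of $\Gamma\acts\Omega$ with the amenability of $\Lambda\acts\m D$ (built into the definition of geometric rigidity) to produce a $\Gamma$-equivariant Borel map $\phi:\Omega\to\mc P(\m D)$. The GHL machinery then provides a dichotomy for such boundary maps: either $\phi$ essentially takes values in finitely supported measures on $\m D$ --- in which case, using isometric ergodicity of $\Gamma\acts\Omega$ and the cocycle description of stabilizers, the image $f(\Gamma)$ virtually fixes a point of $\m D$, which is the desired conclusion --- or $\phi$ induces a non-trivial $\Gamma$-equivariant map from $\Omega$ into a Gromov-boundary-type space naturally associated with $\m D$ by geometric rigidity.

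The main obstacle is ruling out the second alternative, and this is precisely where the higher-rank hypothesis on each building factor enters. Generic points of hyperbolic-like boundaries have very small (virtually cyclic or amenable) stabilizers, whereas in a higher-rank building the chamber boundary $\Omega$ admits many $\Gamma$-equivariant intermediate factors coming from the face structure, whose associated stabilizers are much larger. An adaptation of the Margulis-type factor argument used in \cite{GHL22}, applied to each higher-rank factor $\Omega_i$, should force the hypothetical map into the hyperbolic-like boundary to be essentially constant, contradicting non-triviality. The delicate point is to verify that the factorization step of \cite{GHL22} goes through in the building setting: this amounts to checking that the ergodic decomposition of $(\Omega,\nu)$ with respect to parabolic-type equivalence relations is rich enough to be incompatible with any non-trivial boundary map into a target with small point-stabilizers, which is the higher-rank analogue of the classical Margulis arithmeticity-type factorization.
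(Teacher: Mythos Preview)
Your outline captures the overall shape --- identify $(\Omega,\nu)$ as a strong $\Gamma$-boundary via Theorem~\ref{theo: intro BPF(X & Gamma)} and \cite[Theorem~2.7]{BF14}, then feed this into the GHL machinery --- but two genuine ingredients are missing, and your description of how the higher-rank hypothesis enters is not the mechanism the paper actually uses.

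First, property~(T). The GHL trichotomy (Proposition~\ref{pro: 3 cases morphism rigidity}, i.e.\ \cite[Proposition~4.10]{GHL22} with $X=\{\cdot\}$) has \emph{three} outcomes, not the dichotomy you describe. The middle one is that $f(\Gamma)$ stabilizes a set of at most two points in the auxiliary universally amenable $\Lambda$-space $\Delta$; this only yields that $f(\Gamma)$ is virtually amenable, not that it virtually fixes a point of $\m D$. The paper closes this case by assuming $\Gamma$ has property~(T) (see Theorem~\ref{theo: morphism superrigidity} and Remark~\ref{rem: hyp (T) pas grave}), which forces a discrete amenable image to be finite. Your outline never invokes~(T), and without it this branch of the trichotomy cannot be disposed of by any ``factor'' argument on $\Omega$, since in this branch the relevant equivariant map no longer depends on the boundary variable at all.

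Second, and more substantially, the way the third alternative is eliminated is not a stabilizer-size comparison of the kind you sketch. The operative object is the generalized Weyl group $W_{\Gamma,\Omega}=\Aut_\Gamma(\Omega\times\Omega)$ and its \emph{special subgroups} in the sense of Bader--Furman. The third outcome of the GHL trichotomy produces a surjection $W_{\Gamma,\Omega}\to\m Z/2\m Z$ sending $w_{\mathrm{flip}}$ to $1$ and whose kernel is special. The paper shows (Proposition~\ref{pro: the special subgroups are the special subgroups}) that the special subgroups of the spherical Weyl group $W_\Sigma\subset W_{\Gamma,\Omega}$ are exactly the standard parabolics $W_J$; since in an irreducible higher-rank spherical Weyl group no proper $W_J$ has index~$2$, case~(3) is impossible. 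Proving that special subgroups are parabolic is the new technical content of Section~\ref{sec: Strong boundaries and Weyl groups}: one has to build, via the prouniform-measure formalism, a family of conditional measures $\nu_{\omega,o}^w$ on $\Omega$ indexed by $w\in W_\Sigma$, and run a measurable splitting argument (Lemmas~\ref{lem: passage au suffixe} and~\ref{lem: passage au préfixe}) that is the almost-everywhere analogue of the elementary combinatorial observation in Proposition~\ref{pro: special subgroups, everywhere case}. Your ``Margulis-type factorization via rich parabolic quotients of $\Omega$'' is the correct intuition for why higher rank matters, but it does not name the right invariant (the generalized Weyl group) or the computation actually required (the identification of its special subgroups).
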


This theorem is Theorem \ref{theo: intro cocycles} with $X=\{\cdot\}$, an idea of the proof of Theorem \ref{theo: intro cocycles} is given there.\medskip

In many of the examples above, this can actually be strengthened to the fact that any morphism has finite image instead. See Corollary \ref{cor: grosse liste pour morphismes} for more details. 

\begin{rem}
    Note that in certain cases, this theorem was already known; we thus provide here a new proof of it. For instance, when $\Lambda=\Out(F_n)$ and $\Gamma$ is a lattice in an $\tilde A_2$ building, it follows from \cite{Zuk03} and \cite{BFL23} that the main result of \cite{BW11} can be applied to $\Gamma$, yielding that any morphism from $\Gamma$ to $\Lambda$ has to have finite image.

    If additionally $\Gamma$ is arithmetic, Haettel proved this in \cite{Hae20} for most of the groups listed here.
\end{rem}

Let $G,\Lambda$ be topological groups, and assume $G$ acts by measure-class preserving automorphisms on a standard space $(X,\mu)$. A \emph{cocycle} is a measurable map $c:G\times X\to \Lambda$, such that for all $g,h\in G$ and a.e. $x\in X$,
$$c(gh,x)=c(g,hx)c(h,x).$$
Two cocycles $c_1,c_2$ are \emph{cohomologous} if there exists a Borel map $f:X\to\Lambda$ such that
$$c_2(g,x)=f(gx)\inv c_1(g,x)f(x).$$
Cocycles are a generalization of a morphism that takes into account not just a group at the source, but also an action of that group on some measured space. Note that if $X=\{\cdot\}$, we recover the definition of a morphism, and of conjugacy of morphisms.

Zimmer was the first to use cocycles in rigidity theory \cite{Zim84}, his most famous result stating that any cocycle with source a higher rank semisimple algebraic group and target any semisimple algebraic group has to be cohomologous to a morphism. His results are the motivation behind Zimmer's program, stating that semisimple algebraic groups cannot act on manifolds of dimension smaller than their rank, culminating in \cite{BFH22}. They have also been a key argument in Furman's proof of the measure equivalence rigidity of lattices in higher rank simple Lie groups \cite{Fur99}, and many others. 

In \cite{GHL22}, the authors prove cocycle rigidity results when the source is still a semisimple algebraic group, but the target is changed to a group with hyperbolic properties, more precisely a geometrically rigid group. Here the idea is that higher rank lattices cannot act on spaces with hyperbolic properties. It is reflected in this context through the fact that any cocycle is cohomologous to a cocycle that takes its values in a smaller subgroup. We extend their result to all higher rank building lattices.

\begin{theointro}\label{theo: intro cocycles}
    Let $\Gamma$ be a lattice in a product of regular higher rank Euclidean buildings. Let $X$ be a standard probability space on which $\Gamma$ acts ergodically. Let $\Lambda$ be a group which is geometrically rigid with respect to $\m D$. Then any cocycle $c:\Gamma\times X\to\Lambda$ is cohomologous to a cocycle whose image virtually fixes a point in $\m D$.
\end{theointro}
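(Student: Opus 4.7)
The plan is to combine Theorem \ref{theo: intro BPF(X & Gamma)} with the boundary-theoretic framework of Guirardel-Horbez-L\'ecureux \cite{GHL22}. Their argument, developed for classical higher rank lattices in semisimple algebraic groups, uses an explicit Poisson boundary of the lattice as its central technical input; Theorem \ref{theo: intro BPF(X & Gamma)} supplies exactly such a description in the building setting, and one should be able to transfer their proof with only the modifications needed to handle the building boundary in place of the flag variety.

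First, I would apply Theorem \ref{theo: intro BPF(X & Gamma)} to produce a symmetric generating measure $\mu$ on $\Gamma$ with finite first moment such that $B_{PF}(\Gamma,\mu) = (\Omega,\nu)$ is the product of spaces of chambers at infinity of the factor buildings, with the harmonic measure class. By \cite[Theorem 2.7]{BF14}, $(\Omega,\nu)$ is a strong $\Gamma$-boundary: the $\Gamma$-action on $\Omega$ is amenable and the diagonal action on $\Omega \times \Omega$ is isometrically ergodic. Since $\Gamma \curvearrowright X$ is ergodic, the skew action $\Gamma \curvearrowright \Omega \times X$ is amenable and relatively isometrically ergodic over $X$, which are exactly the abstract inputs required by the machinery of \cite{GHL22}.

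Next, unpacking the definition of a geometrically rigid group, the amenability of $\Omega \times X$ provides a $(\Gamma,c)$-equivariant measurable map $\phi$ from $\Omega \times X$ to an appropriate $\Lambda$-compactification of $\m D$. The structural dichotomy built into the geometrically rigid formalism splits the analysis: either $\phi$ essentially lands in the hyperbolic-like boundary of $\m D$, or its values are supported on $\m D$ itself. In the second case, the relative isometric ergodicity over $X$ combined with the $\Lambda$-equivariance forces $\phi$ to descend through the $\Omega$-variable to a $c$-equivariant measurable map $f : X \to \m D$. The identity $c'(\gamma,x) = f(\gamma x)\inv c(\gamma,x) f(x)$ then exhibits $c$ as cohomologous to a cocycle $c'$ whose image sits in the stabilizer of a point of $\m D$; the usual passage to a finite-index subgroup absorbs the "virtually" in the conclusion.

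The main obstacle is ruling out the first case, and this is where the higher rank of $\Gamma$ must enter, through the product decomposition $\Omega = \Omega_1 \times \cdots \times \Omega_n$ in which each $\Omega_i$ is the chamber space of a building of rank at least two. Following the contraction/transversality arguments of \cite[Sections 4--5]{GHL22}, one uses the rich Weyl-type combinatorics of each $\Omega_i$ (transitivity of the chamber stabilizer on generic pairs, isometric ergodicity on triples) to show that any $c$-equivariant map into a hyperbolic-like boundary must degenerate, contradicting the non-elementary properties encoded in the geometrically rigid framework. The delicate technical point is verifying that these combinatorial rigidity features—which \cite{GHL22} read off from flag varieties of algebraic groups—survive for abstract building boundaries; the explicit identification of $B_{PF}$ furnished by Theorem \ref{theo: intro BPF(X & Gamma)} is what makes this verification possible.
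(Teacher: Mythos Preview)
Your opening is correct: Theorem \ref{theo: intro BPF(X & Gamma)} together with \cite[Theorem 2.7]{BF14} does give that $(\Omega,\nu)$ is a strong $\Gamma$-boundary, and this is the first of the two new ingredients the paper supplies. But the remainder of your sketch has two genuine gaps.

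First, the trichotomy of \cite[Proposition 4.10]{GHL22} is not the dichotomy you describe. One of its alternatives is that there is a $c$-equivariant map $X\to\Delta$ (or $X\to\Delta^{(2)}/\mathfrak S_2$), where $\Delta$ is the hyperbolic-like boundary, \emph{not} $\m D$. Handling that alternative requires the Spatzier--Zimmer lemma (Proposition \ref{pro: Spatzier-Zimmer}), which in turn needs property (T) for $\Gamma$; this hypothesis is implicit in the body version (Theorem \ref{theo: cocycle superrigidity}) and you cannot avoid it by isometric-ergodicity arguments alone.

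Second, and more seriously, your plan for ruling out the boundary alternative is the wrong mechanism. The third case of the trichotomy is not eliminated by contraction/transversality on $\Omega$; it is a purely structural statement about the generalized Weyl group $W_{\Gamma,\Omega}$: there would exist a surjection $W_{\Gamma,\Omega}\to\m Z/2\m Z$ sending $w_{\mathrm{flip}}$ nontrivially, with kernel a \emph{special} subgroup. The paper's second new ingredient (Proposition \ref{pro: the special subgroups are the special subgroups}) is that the special subgroups of $W_\Sigma<W_{\Gamma,\Omega}$ are exactly the parabolic subgroups $W_J$; since every factor has rank $\ge 2$, no $W_J$ has index $2$ in $W_\Sigma$, killing this case. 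Proving that special subgroups are parabolic when there is no ambient algebraic group is the substantial work here: it requires introducing the restricted prouniform measures $\nu_{\omega,o}^w$ on $\Omega$ (Section \ref{sec: Strong boundaries and Weyl groups}) and a measurable version of the reduced-word splitting argument (Lemmas \ref{lem: passage au suffixe}, \ref{lem: passage au préfixe}, \ref{lem: "ergodicite" de W}). Your reference to ``verifying that combinatorial rigidity features survive'' points in the right direction but does not identify this obstruction; the transversality arguments you cite from \cite{GHL22} will not by themselves tell you what the special subgroups of $W_{\Gamma,\Omega}$ are.
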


Just like in the morphism case, for many of the known examples of geometrically rigid spaces, this can be strengthened to the fact that any cocycle is cohomologous to a cocycle whose image is finite (see Corollary \ref{cor: grosse liste pour cocycles}).

The proof of this Theorem requires two new ingredients, compared to \cite{GHL22}. The first one is proving that $(\Omega,\nu)$ is a strong boundary of $\Gamma$ in the sense of \cite{BF14}, which boils down to Theorem \ref{theo: intro BPF(X & Gamma)}. We then need to identify the \emph{special subgroups} of the \emph{generalized Weyl group} of the boundary $(\Omega,\nu)$ (see Section \ref{sec: Strong boundaries and Weyl groups} for definitions). In our case we cannot rely on the ambient algebraic group, so the proof of this fact is not trivial and requires new ideas. In fact we introduce new families of measures on $\Omega$ and $\Omega^2$ using the machinery of prouniform measures, introduced in \cite{BFL23}. This is done in Section \ref{sec: Strong boundaries and Weyl groups}.\medskip

\noindent\textbf{Organisation of the paper.} Section \ref{sec: Buildings} recalls various definitions from the theory of (Euclidean) buildings, as well as the theory of prouniform measures. Section \ref{sec: Random walks} is devoted to the definition of the various probabilistic objects we need, and to establishing Theorem \ref{theo: intro BPF(X & Gamma)}. In Section \ref{sec: Strong boundaries and Weyl groups} we introduce new families of measures on $\Omega$, designed to be uniform on the set of chambers at a certain Weyl distance, and use them to prove that the special subgroups of the Weyl group are the Weyl subgroups. We then use this and Theorem \ref{theo: intro BPF(X & Gamma)} in Section \ref{sec: Superrigidities} to prove Theorems \ref{theo: intro morphisms} and \ref{theo: intro cocycles}.

\noindent\textbf{Acknowledgements.} The author would like to thank Jean Lécureux and Camille Horbez for proposing this topic and answering a great many questions, Bertrand Rémy and Bartosz Trojan for taking the time to talk about the Poisson boundary of a building, Jean Lécureux and Stefan Witzel for fruitful discussions on prouniform measures in the type-preserving context and Malo Hillairet for instructive discussions regarding the probabilistic part of the paper.

\section{Buildings and measures}\label{sec: Buildings}
\subsection{Generalities about buildings}
\subsubsection{Coxeter groups and complexes}
For (a lot) more information on Coxeter groups, see \cite{Bou68} and \cite[Chapter III]{Bro89}.\medskip

A group $W$ is called a \emph{Coxeter group} if it admits a presentation 
$$G=\bra s\in S|(st)^{m_{s,t}}=1\ket$$
where $S$ is finite, $m_{s,s}=1$ and $m_{s,t}=m_{t,s}\in \m N_{\ge 2}\cup\{\infty\}$ when $s\ne t$. A Coxeter group is uniquely defined by its \emph{Coxeter matrix} 
$$M=(m_{s,t})_{s,t\in S},$$
or its \emph{Dynkin diagram} $D$, defined as follows. The vertices of $D$ are elements of $S$, and an edge is drawn between $s$ and $t$ if and only if $m_{s,t}>2$. The edge $\{s,t\}$ is labelled by $m_{s,t}$ when $m_{s,t}\ge 4$. The Coxeter group $W$ is said to be \emph{irreducible} if $D$ is connected. If $W$ is not irreducible, it is the direct product of its irreducible components, which correspond to the connected components of $D$. The \emph{length} $\ell(w)$ of $w\in W$ is the smallest integer $k$ for which there exist $s_1,\ldots s_k\in S$ (not necessarily distinct) for which $w=s_1\cdots s_k$.

From a Coxeter group $(W,S)$, one can form a simplicial complex, called the \emph{Coxeter complex}, defined as the poset 
$$\Sigma=\{w\bra S'\ket, w\in W,S'\subsetneq S\}$$ with the reverse inclusion order: $A\le B\iff A\supset B$. We will then say that $A$ is a face of $B$. Maximal simplices correspond to elements of $W$, and are called \emph{chambers}, or \emph{alcoves}. Simplices of codimension $1$ are called facets, while general simplices are faces.

The group $W$ acts on $\Sigma$ by left translation: 
$$w\cdot (w'\bra S'\ket)=ww'\bra S'\ket.$$
This action is simplicial, and simply transitive on the chambers. The \emph{type} (resp. \emph{cotype}) of a simplex $w\bra S'\ket$ will be $S\setminus S'$ (resp. $S'$). In particular, the type of a vertex is a singleton, and there are $n$ different vertex types. We will say of a simplicial automorphism of $\Sigma$ that it is \emph{type-preserving} if it preserves the type of every vertex, in which case it also preserves every type. The action of $W$ on $\Sigma$ is type preserving. A \emph{gallery} in $\Sigma$ is a finite sequence $(C_k)$ of chambers of $\Sigma$ for which $C_k$ and $C_{k+1}$ are \emph{adjacent}, in that they share a facet.

\subsubsection{Root systems and Euclidean reflection groups}
Let $\Phi$ be an irreducible root system on an $n$-dimensional Euclidean space $(E,\bra\cdot,\cdot\ket)$, and let $B=\{\alpha_1,\ldots ,\alpha_n\}$ be a basis of $\Phi$. Let $\Phi^+$ be the set of positive roots of $\Phi$ given by the choice of $B$. By irreducibility, there is a unique \emph{highest root} $\tilde\alpha\in \Phi$, i.e. a root for which all the coefficients $m_i$ in the expression $$\tilde\alpha=\sum_{i=1}^nm_i\alpha_i$$ are maximal among the elements of $\Phi$.

The dual basis $(\lambda_i)_i$ of the $\alpha_i$ is defined by $\bra \lambda_i,\alpha_j\ket =\delta_{i,j}$. The $\lambda_i$ are called the \emph{fundamental coweights} of $\Phi$, and $P=\bigoplus \m Z\lambda_i$ is called the \emph{coweight lattice}. A coweight $\lambda\in P$ is said to be \emph{dominant} if $\bra \lambda,\alpha_i\ket \ge 0$ for all $i$. We denote by $P^+$ the set of dominant coweights.

Denote by $\alpha^\vee=\frac{2\alpha}{\bra\alpha,\alpha\ket}$ the dual root of $\alpha$, and let $\Phi^\vee=\{\alpha^\vee,\alpha\in\Phi\}$ be the dual root system, and by $Q$ the \emph{coroot lattice} $\bigoplus \m Z\alpha_i^\vee$. It is the set of coweights $\lambda\in P$ with the same type as $0$.

We let $s_i:x\mapsto x-(\bra x,\alpha_i\ket)\alpha_i^\vee$ be the orthogonal reflection with respect to the hyperplane $\alpha_i^\perp$, as well as $s_0:x\mapsto x-(\bra x,\tilde\alpha\ket-1)\tilde\alpha^\vee$, the orthogonal reflection with respect to the hyperplane $\{\bra x,\tilde\alpha\ket =1\}$. Let $S_0=\{s_1,\ldots ,s_n\}$ and $S=\{s_0,\ldots ,s_n\}$, and let $W_0=\bra S_0\ket$ and $W=\bra S\ket$ be the subgroups of $O(E)$ (resp. $\Isom(E)$) generated by $S_0$ (resp. $S$). Both $(W_0,S_0)$ and $(W,S)$ are Coxeter groups, $W_0$ is finite, and $W$ is countable and discrete. The group $W_0$ is called the \emph{linear}, or \emph{spherical}, Weyl group, while $W$ is the \emph{affine} Weyl group. They will both be simply called Weyl group when it is clear from context which one is being referred.

A Coxeter group $(W,S)$ will be called \emph{Euclidean}, or \emph{affine}, if it arises as an affine Weyl group for some root system. In that case the Coxeter complex is isomorphic to the tessellation of $E$ by the hyperplanes 
$$\mc H=\{H_{\alpha,k}=\{x\in E,\bra x,\alpha\ket =k\},k\in \m Z,\alpha\in\Phi\}.$$
The vertex set of this tessellation is a lattice in $E$, of which $P$ is a sublattice. The set of types $i$ which can be elements of $P$ is denoted by $I_P$, it is the set of good types, as $P$ should be thought of as the set of good vertices of $\Sigma$. All vertices of these types will then be in $P$.

The fact that $\Sigma$ can be embedded in a Euclidean space in this way means we are able to make a choice of a preferred chamber in $\Sigma$ from our choice of a basis of $\Phi$. More specifically, the chamber $C_0=\{x\in E,\bra x,\alpha_i\ket >0, 1\le i\le n\text{ and }\bra x,\tilde\alpha\ket<1\}$ is called the \emph{fundamental chamber} of $\Sigma$.

In a similar way, the hyperplanes $\alpha_i^\perp$ separate $E$ into a finite number of connected components, which are open simplicial cones called \emph{sectors}. Among those, we can identify a \emph{fundamental sector}, defined by $\Lambda=\{x\in E,\bra x,\alpha_i\ket>0,1\le i\le n\}$ (note that $P^+=\overline\Lambda\cap P$). It is also a chamber at infinity, in the following sense: $E$ is a Euclidean space, hence $\rm{CAT}(0)$, so it has a Gromov boundary $\partial_\infty E\simeq \m S^{n-1}$. Then the sectors at infinity $\partial_\infty \mc S$ partition $\partial_\infty E$, and the Coxeter complex of $(W_0,S_0)$ is isomorphic to $\partial_\infty E$, where the chambers are the sectors at infinity $\partial_\infty\mc S$ (hence the name of \emph{spherical} Weyl group).

The \emph{extended} affine Weyl group is defined to be $\tilde W=W_0\rtimes P$, where $P$ acts on $E$ by translations. Note that $W$ is a normal subgroup of finite index in $\tilde W$. A simplicial automorphism $\psi$ of $\Sigma$ is said to be \emph{type-rotating} if it can be written as $w\circ \psi_0$ where $w\in\tilde W$ and $\psi_0$ is type-preserving.

\subsubsection{Buildings}
We are now ready to define the main object of our study: buildings. For more information about them, see \cite{AB08}. Let $X$ be a simplicial complex, let $(W,S)$ be a Coxeter group, and let $\Sigma$ be the associated Coxeter complex. We say that $X$ is a \emph{building of type} $W$ if $X$ is covered by a system of subcomplexes, called apartments, satisfying the following conditions:
\begin{enumerate}
	\item All the apartments are isomorphic to $\Sigma$,
	\item For any two simplices, there is a common apartment that contains them,
	\item If two apartments $\mc A_1,\mc A_2$ both contain simplices $A,B$, then there is an isomorphism between them which fixes $A$ and $B$ pointwise.
\end{enumerate}

We say that $X$ is affine (resp. spherical) when $(W,S)$ is. Many of the definitions we had for the Coxeter complex can be extended to buildings. For instance, there is a labelling of the vertices of $X$ into types which is compatible with that of $\Sigma$, in that the isomorphism between the apartment and $\Sigma$ can be chosen to be type-preserving. Two such labellings are essentially unique, up to a permutation of the types, which will have to be an automorphism of the Dynkin diagram of $(W,S)$. In particular, in the affine case, the set $V_P$ of \emph{good} vertices, those whose type is in $I_P$, does not depend on the choice of the labelling. The set $V_Q$ of vertices of type $0$ will however depend on the choice of labelling, albeit not in a dramatic way as we will always have $V_Q\subset V_P$. We make such a choice now and will not mention it later on.

Let $C,C'$ be chambers in $X$, and let $\mc A$ be an apartment containing $C,C'$. Since $W$ acts simply transitively on (the chambers of) $\Sigma$, there is a unique element $w\in W$ such that $w$ sends $C$ to $C'$ in $\mc A$. This $w$ can be seen not to depend on the choice of $\mc A$.

\begin{dfn}
    The element $w$ is called the $W$-distance between $C$ and $C'$, and is denoted by $\delta(C,C')$
\end{dfn}

This allows us to define the notion of a \emph{residue} (see \cite[Section 5.3]{AB08}). Let $J\subset S$, and let $W_J=\langle J\rangle$. Define an equivalence relation on the chambers of $X$ by $C\sim_J D$ if and only if $\delta(C,D)\in W_J$. The $\sim_J$-equivalence classes are called the $J$-residues of $X$. A subset $\mc R$ of $X$ is said to be a residue if it is a $J$-residue for some $J$, which is unique and called the \emph{type} of $\mc R$, while $|J|$ is the \emph{rank} of $\mc R$. $J$-residues are buildings of type $(W_J,J)$. For some residue $\mc R$ and some chamber $C\in X$, there is a unique chamber of $\mc R$ which is closest to $C$, it is called the \emph{projection} of $C$ onto $\mc R$, written $\proj_\mc R(C)$.

In the affine case, there is also a $P$-distance on $V_P$: for $x,y\in V_P$, choose an apartment containing both, and a type rotating isomorphism sending $x$ to $0$ and $y$ to some $\lambda\in P^+$ (this is always possible, and $\lambda$ is unique). 

\begin{dfn}
    Define the $P$ distance between $x$ and $y$ to be $\sigma(x,y)=\lambda$, and write $V_\lambda(x)$ to denote the (finite) set of points $y\in V_P$ such that $\sigma(x,y)=\lambda$.
\end{dfn} 

For a fixed $x$, the $V_\lambda(x),\lambda\in P^+$ partition $V_P$ (see \cite[Proposition 5.6]{1.pdf}). There it is also proven that $|V_\lambda(x)|$ does not depend on $x$; the common value is denoted $N_\lambda$.

Taking a facet in $X$, we may wonder how many chambers it belongs to. In $\Sigma$, that number is always $2$, and we shall regard $\Sigma$ as a not very interesting case of a building. We will instead prefer studying buildings where there is branching: we say a building is \emph{thick} if any facet belongs to at least $3$ chambers, or in other words if any residue of rank $1$ has at least three elements.

If $|\mc R|$ only depends on the type of $\mc R$ for  rank $1$ residues, we say the building is \emph{regular}.

\textbf{We will always assume our buildings to be affine, thick and regular from now on} unless explicitly stated otherwise, and we let $q_i+1$ be the common number among the facets of cotype $i$. If $w=s_{i_1}\cdots s_{i_k}$ is a reduced expression, we let $q_w=q_{i_1}\cdots q_{i_k}$. Also, for $\alpha\in \Phi$, we let $q_\alpha$ be equal to $q_i$ where $i$ is such that $\alpha\in W\cdot \alpha_i$.
This allows us to define a multiplicative function $\chi:E\to\m R_{>0}$ by
$$\chi(\lambda)=\prod_{\alpha\in\Phi^+}q_\alpha^{\bra \lambda,\alpha\ket}.$$
The function $\chi$ is useful when trying to compute $N_\lambda$. If for a finite subset $U\subset W$ we let
$$U(q\inv)=\sum_{w\in U} q_w\inv,$$
then 
$$N_\lambda=\frac{W_0(q\inv)}{(\Stab_{W_0}(\lambda))(q\inv)}\chi(\lambda)$$
(see \cite[Theorem 5.15]{1.pdf}). The space $X$ is also a metric space, with $d(x,y)$ being the distance of $x$ to $y$ for the euclidean distance in some apartment (as usual, this does not depend on the choice of apartment). $X$ endowed with this distance is $\rm{CAT}(0)$ (see \cite{AB08} for instance).

\subsection{The building at infinity}
Another definition from the Coxeter complex that is generalisable to buildings is that of sectors, and of points at infinity. A sector is simply a subcomplex of $X$ which is a sector in some apartment. Since $X$ is a $CAT(0)$ space, it admits a boundary at infinity $\dr_\infty X$, which we will sometimes denote $\Delta_\infty$. It is naturally endowed with a structure of a simplicial complex, with the maximal simplices being boundaries at infinity of sectors of $X$. Along with the observation made previously that the boundary of $\Sigma$ is isomorphic to the Coxeter complex of $(W_0,S_0)$, we find that the system of apartments made by the boundaries of apartments of $X$ turns $\Delta_\infty$ into a building of type $(W_0,S_0)$. For $x\in V_P$ and $\omega\in\Omega$, there is a unique sector $\mc S(x,\omega)$ which is based at $x$ and is in the class of $\omega$. The apartments of $\Delta_\infty$ are in one-to-one correspondence with apartments of $X$, and two sectors of  $X$ give the same chamber at infinity if and only if they contain a common subsector. The set of chambers at infinity will be the main focus of this paper, and will be denoted by $\Omega$.

Moreover, the building $\Delta_\infty$ being spherical, there is a unique element of longest length $w_0\in W_0$, and two chambers $\omega,\omega'\in\Omega$ are contained in a unique apartment if and only if $\delta(\omega,\omega')=w_0$. In this case, we say $\omega$ and $\omega'$ are \emph{opposite}, and thus there is a bijection between the set of apartments (with a choice of a fundamental sector), and the set $\Omega^{2,\op}$ of pairs of chambers at infinity which are opposite. We will be interested in the pairs of opposite chambers at infinity whose apartment contain a given vertex.

To this end, let $\lk o$ be the \emph{link} of $o$ in $X$, that is the set of facets $F$ not containing $o$ such that there is a chamber in $X$ which contains both $o$ and $F$, which we endow with the induced simplicial complex structure. Then for $o\in V_P$, $\lk o$ is a spherical building of type $(W_0,S_0)$, and for any $\omega\in\Omega,$ there is a unique element $\proj_{\lk o}(\omega)$ of $\lk o$ which is in $\mc S(o,\omega)$.

We have the following result:
\begin{lem}\label{lem: opp ds link ->o ds apt}
    Let $\omega,\omega'\in\Omega^{2,\op}$. Then $o\in V_P$ is in the apartment defined by $\omega,\omega'$ if and only if $\proj_{\lk o}(\omega)$, $\proj_{\lk o}(\omega')$ are opposite in $\lk o$.
\end{lem}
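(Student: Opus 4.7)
The plan is to treat the two directions separately, with the key input being the correspondence between sectors based at a vertex and chambers in the link at that vertex. For the forward direction, suppose $o\in\mc A$, where $\mc A$ is the unique apartment of $X$ containing $\omega,\omega'$ at infinity. The sectors $\mc S(o,\omega)$ and $\mc S(o,\omega')$ then lie in $\mc A$, so their first chambers $\proj_{\lk o}(\omega)$ and $\proj_{\lk o}(\omega')$ belong to the spherical apartment $\lk o\cap\mc A$ of $\lk o$. Inside the Euclidean apartment $\mc A$, the assignment that sends a chamber at infinity to the first chamber at $o$ of the corresponding sector is an isomorphism of spherical Coxeter complexes, and hence preserves opposition; since $\omega,\omega'$ are opposite in $\partial_\infty\mc A$, we conclude that their projections in $\lk o$ are opposite.

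For the reverse direction, suppose $F:=\proj_{\lk o}(\omega)$ and $F':=\proj_{\lk o}(\omega')$ are opposite in $\lk o$. Pick a point $v$ in the interior of $F$ in the CAT(1) spherical realization of $\lk o$; its antipode $\bar v$ lies in the interior of $F'$ by opposition. The ray from $o$ in direction $v$ lies in $\mc S(o,\omega)$ and heads to a point $\xi\in\omega^\circ$, and the ray from $o$ in direction $\bar v$ lies in $\mc S(o,\omega')$ and heads to a point $\xi'\in\omega'^\circ$. The antipodality of $v,\bar v$ in $\lk o$ guarantees, by a standard CAT(0) concatenation lemma, that these rays glue at $o$ into a bi-infinite geodesic $\gamma\colon\m R\to X$ with $\gamma(+\infty)=\xi$ and $\gamma(-\infty)=\xi'$. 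A bi-infinite geodesic in a Euclidean building whose endpoints at infinity lie in the interiors of opposite chambers is contained in an apartment of $X$, and that apartment must have $\omega,\omega'$ as chambers at infinity; by uniqueness of the apartment with boundary containing the opposite pair $(\omega,\omega')$, this apartment is $\mc A$ itself, so $o\in\gamma\subset\mc A$.

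The main obstacle is the reverse direction, which rests on two essentially standard properties of Euclidean buildings: the CAT(0) concatenation of two geodesic rays whose initial directions at a common vertex are antipodal, and the containment of any bi-infinite geodesic with endpoints in interiors of opposite chambers at infinity in a single apartment.
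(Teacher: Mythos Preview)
Your forward direction is essentially identical to the paper's. For the converse, the paper takes a one-line route: it cites \cite[Proposition II.1.12]{Par00}, which directly asserts that when the germs $\proj_{\lk o}(\omega)$ and $\proj_{\lk o}(\omega')$ are opposite, the union $\mc S(o,\omega)\cup\mc S(o,\omega')$ is contained in an apartment; uniqueness then forces that apartment to be $\mc A$. Your argument is a genuine CAT(0) alternative: you build a single bi-infinite geodesic through $o$ by gluing two rays with antipodal initial directions, then invoke the standard fact that a geodesic line in a Euclidean building lies in some apartment, and read off $\mc A'=\mc A$ from the interior endpoints. Both are correct; the paper's version is shorter because the heavy lifting is outsourced to Parreau, while yours is more self-contained and makes the CAT(0) mechanism explicit. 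One small imprecision worth tightening: in a thick spherical building an interior point $v\in F^\circ$ has many antipodes, one in each chamber opposite $F$; you should say you choose $\bar v$ to be the antipode of $v$ in a common apartment containing $F$ and $F'$ (equivalently $\bar v=-v$ there), which then lies in $F'^\circ$.
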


\begin{proof}
    If $o$ is in the apartment defined by $\omega,\omega'$ then there is an isometry from that apartment to $\Sigma$ that sends $o$ to $0$, $\mc S(o,\omega)$ to $\Lambda$ and $\mc S(o,\omega')$ to $w_0\Lambda$. But then, since $\proj_{\lk o}\omega=\lk o\cap\mc S(o,\omega)$ we must also have 
    $$\proj_{\lk 0}\partial_\infty(w_0\Lambda)=w_0\proj_{\lk 0}\partial_\infty\Lambda,$$
    so that $\proj_{\lk o}\omega',\proj_{\lk o}\omega$ are indeed opposite in $\lk o$.\medskip

    Conversely, if $\proj_{\lk o}(\omega),\proj_{\lk o}(\omega')$ are opposite, then $A=\mc S(o,\omega)\cup\mc S(o,\omega')$ is contained in an apartment by \cite[Proposition II.1.12]{Par00}. As there is exactly one apartment containing $\omega,\omega'$, we are done.
\end{proof}

Let $x,y\in V_P$ and $\omega\in\Omega$, and take some $z\in\mc S(x,\omega)\cap\mc S(y,\omega)$. The quantity
$$h(x,y;\omega):=\sigma(x,z)-\sigma(y,z)\in P$$
does not depend on $z$ (see \cite[Theorem 3.4]{2.pdf}) and is, in a sense, the Busemann function at $\omega$. The map $h$ additionally verifies the cocycle property: 
$$h(x,y;\omega)+h(y,z;\omega)=h(x,z;\omega)$$
for any $x,y,z\in V_P$, and $\omega\in\Omega$.

The set $\Omega$ can be endowed with a topology defined such that the sets 
$$\Omega_x(y)=\{\omega\in\Omega,\:y\in \mc S(x,\omega)\}$$
for some fixed $x\in V_P$ and varying $y\in V_P$ form a basis of clopen neighbourhoods. This topology makes $\Omega$ compact, and the topology does not depend on the choice of $x$ (see \cite[Theorem 3.17]{2.pdf}).

\subsection{Harmonic measures at infinity}\label{ssec:harm meas infty}
For a fixed $x\in V_P$, there is a unique Radon measure $\nu_x$ on $\Omega$ such that for any $y\in V_\lambda(x)$, 
$$\nu_x(\Omega_x(y))=\frac1{N_\lambda}$$
(see the discussion in \cite{2.pdf} following the proof of Theorem 3.6).

In a way, $\nu_x$ measures all directions at infinity with equal probability. As we will see later, this is showcased by the fact that $\nu_x$ captures the nearest neighbour random walk very well (or any isotropic walk in fact). The measures $\nu_x,x\in V_P$ are called the \emph{harmonic measures} on $\Omega$.

\begin{pro}\label{Radon-Nikodym nu}
    For any $x,y\in V_P$, the measures $\nu_x$ and $\nu_y$ are mutually absolutely continuous, and their Radon-Nikodym derivative is given by
    $$\frac{\mathrm d\nu_y}{\mathrm d\nu_x}(\omega)=\chi(h(x,y;\omega)).$$
\end{pro}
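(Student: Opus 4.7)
The plan is to verify the Radon--Nikodym identity on the clopen basis $(\Omega_x(z))_{z\in V_P}$, exploiting the explicit formula $\nu_x(\Omega_x(z)) = 1/N_\lambda$ for $z\in V_\lambda(x)$ together with the multiplicativity of $\chi$.

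The first step is to show that $\omega\mapsto h(x,y;\omega)$ is locally constant on $\Omega$. Given $\omega\in\Omega$, the sectors $\mc S(x,\omega)$ and $\mc S(y,\omega)$ share a common subsector, so I can pick $z\in V_P\cap\mc S(x,\omega)\cap\mc S(y,\omega)$, and by pushing $z$ deep enough into this shared subsector I expect that for every $\omega'\in\Omega_x(z)$ the vertex $z$ still lies in $\mc S(y,\omega')$, and symmetrically. Geometrically, once $z$ sits past the ``branch region'' between the sectors based at $x$ and at $y$, the two sectors toward any nearby chamber at infinity must merge before reaching $z$, whence $\Omega_x(z)=\Omega_y(z)$ and $h(x,y;\omega')=\sigma(x,z)-\sigma(y,z)$ is constant on this clopen.

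The second step is the main computation. Since any $\Omega_x(z)$ can be refined into a disjoint union of smaller basic opens $\Omega_x(w)$ by taking $w$ further out in the cone of sector directions through $z$, it suffices to check the identity on basic opens $\Omega_x(z)$ satisfying Step~1 and with $\lambda=\sigma(x,z)$ and $\mu=\sigma(y,z)$ both strictly dominant, so that $\Stab_{W_0}(\lambda)=\Stab_{W_0}(\mu)=\{1\}$ and the formula for $N_\lambda$ collapses to $N_\lambda = W_0(q\inv)\chi(\lambda)$, and similarly for $N_\mu$. Then $\nu_x(\Omega_x(z))=1/N_\lambda$, $\nu_y(\Omega_x(z))=\nu_y(\Omega_y(z))=1/N_\mu$, and $\chi(h(x,y;\cdot))\equiv\chi(\lambda-\mu)$ on this set, so the desired identity reduces to $N_\lambda/N_\mu=\chi(\lambda)/\chi(\mu)=\chi(\lambda-\mu)$, which is multiplicativity of $\chi$. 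Strict positivity of $\chi$ yields mutual absolute continuity.

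The main obstacle is the uniformity in the geometric claim of Step~1: producing a single $z$ for which $\Omega_x(z)=\Omega_y(z)$ uniformly in $\omega'\in\Omega_x(z)$, rather than just for the fixed $\omega$ we started with. This should follow from a parallel-sectors argument inside the apartments simultaneously containing $x$, $y$ and $\omega'$, using the fact that $\Omega_x(z)$ shrinks as $z$ recedes so that the base points of any pair of asymptotic sectors are eventually ``behind'' $z$. Everything else is then bookkeeping and the one-line multiplicative identity above.
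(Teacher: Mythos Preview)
The paper does not actually prove this proposition: its proof is a bare citation to \cite[Theorem 3.17]{2.pdf} (Parkinson). Your proposal is essentially a reconstruction of the standard argument found there, and the outline is sound: verify the identity on a basis of clopens on which $h(x,y;\cdot)$ is constant, then reduce to the multiplicativity of $\chi$ via the explicit formula $N_\lambda = W_0(q^{-1})\chi(\lambda)$ for strictly dominant $\lambda$. The computation in Step~2 is correct as written.

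The point you flag as the ``main obstacle'' is indeed the only non-formal step, and your intuition for it is right but the sketch is too loose to count as a proof. The precise statement you need is: for every $\omega\in\Omega$ there exists $z\in \mc S(x,\omega)\cap\mc S(y,\omega)$, with both $\sigma(x,z)$ and $\sigma(y,z)$ strictly dominant, such that $\Omega_x(z)=\Omega_y(z)$; moreover such $z$ can be chosen so that $\Omega_x(z)$ is contained in any prescribed neighbourhood of $\omega$. This follows from the fact that $\mc S(x,\omega)\cap\mc S(y,\omega)$ contains a subsector $\mc S(w,\omega)$ whose base vertex $w$ satisfies $d(x,w),d(y,w)\leq d(x,y)$ (a standard consequence of the CAT(0) geometry of the building, or of the gate property of sectors), together with the observation that for $z$ with $\sigma(w,z)$ strictly dominant and each coordinate large compared to $d(x,y)$, any sector $\mc S(x,\omega')$ containing $z$ must already contain $w$, hence also $\mc S(w,\omega')\ni z$, and symmetrically for $y$. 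You should either cite this (it is Parkinson's Lemma behind his Theorem~3.17) or spell it out; the ``parallel-sectors argument'' phrase is not enough on its own.
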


\begin{proof}
    See \cite[Theorem 3.17]{2.pdf}
\end{proof}

In particular, when we say that some subset $A\subset \Omega$ is of full $\nu_x$-measure, this notion will not depend on the specific $x$ we chose.

\begin{pro}
    Let $\omega\in\Omega$. The set $\Omega^{\op}(\omega)$ of chambers in $\Omega$ which are opposite to $\omega$ is of full $\nu_x$-measure.
\end{pro}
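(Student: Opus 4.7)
The strategy is to reduce the statement to a claim about projections to links, by means of Lemma~\ref{lem: opp ds link ->o ds apt}. Fix a regular dominant coweight $\lambda_0 \in P^+$ of good type, and let $y_n$ be the unique vertex of $\mc S(x,\omega)$ with $\sigma(x,y_n) = n\lambda_0$. Set
\[
A_n := \{\omega' \in \Omega : \proj_{\lk y_n}(\omega) \text{ and } \proj_{\lk y_n}(\omega') \text{ are opposite in } \lk y_n\}.
\]
The constructive direction in the proof of Lemma~\ref{lem: opp ds link ->o ds apt} (the one using \cite{Par00}) builds an apartment containing $\mc S(y_n,\omega) \cup \mc S(y_n,\omega')$, which gives both an apartment containing $\omega,\omega'$ and the inclusion $A_n \subset \Omega^{\op}(\omega)$.

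Next I would show $\Omega^{\op}(\omega) = \bigcup_n A_n$. Given $\omega' \in \Omega^{\op}(\omega)$ with its unique apartment $A$, pick any $z \in A \cap V_P$: then $\mc S(z,\omega) \subset A$ has the same chamber at infinity as $\mc S(x,\omega)$, so by the standard fact that any two sectors with the same direction share a common sub-sector, these two sectors share a sub-sector $\mc S'$. Regularity of $\lambda_0$ guarantees $y_n \in \mc S'$ for $n$ large enough, so $y_n \in A$, and the forward direction of Lemma~\ref{lem: opp ds link ->o ds apt} gives $\omega' \in A_n$. It therefore suffices to show $\nu_x(A_n) \to 1$.

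To this end, let $C_n := \proj_{\lk y_n}(\omega)$ and partition $\Omega = \bigsqcup_D B_n(D)$, where $D$ runs over chambers of $\lk y_n$ and $B_n(D) := \{\omega' : \proj_{\lk y_n}(\omega') = D\}$, so that $\Omega \setminus A_n$ is the disjoint union of the $B_n(D)$ for $D$ not opposite to $C_n$. Each $B_n(D)$ further decomposes as a disjoint union of basic clopens $\Omega_{y_n}(z)$ for $z$ in the simplicial cone at $y_n$ spanned by $D$. Combining the explicit formula $\nu_{y_n}(\Omega_{y_n}(z)) = 1/N_\mu$ for $z \in V_\mu(y_n)$ with the Radon--Nikodym formula $d\nu_x/d\nu_{y_n} = \chi(h(y_n,x;\cdot))$ from Proposition~\ref{Radon-Nikodym nu} yields an explicit expression for each $\nu_x(B_n(D))$. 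The main obstacle is the resulting exponential-decay estimate: for $D$ not opposite to $C_n$ one needs to check, by a direct computation in apartment coordinates, that the Busemann cocycle $h(x,y_n;\omega')$ restricted to $B_n(D)$ takes values $\mu$ which, relative to the dominance order, are strictly smaller than $n\lambda_0 = h(x,y_n;\omega)$ by a positive multiple of some positive root $\alpha$. The multiplicative growth $\chi(\lambda) = \prod_{\alpha>0} q_\alpha^{\langle \lambda,\alpha\rangle}$ then produces an exponentially small factor in $n$, which, summed over the finitely many non-opposite chambers $D$ of $\lk y_n$ (whose number is independent of $n$), gives $\nu_x(\Omega \setminus A_n) \to 0$ and concludes the proof.
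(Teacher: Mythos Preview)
The paper does not give a proof of this statement; it simply refers to \cite[Theorem 6.2]{RT21}, where the result is obtained via Martin boundary analysis. Your attempt is therefore a genuinely different, more elementary-looking route.

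Your reduction is correct: the equality $\Omega^{\op}(\omega)=\bigcup_n A_n$ holds for the reasons you give (regularity of $\lambda_0$ is exactly what forces the $y_n$ into any given subsector), and establishing $\nu_x(A_n)\to 1$ would indeed finish the argument. The gap is entirely in that final estimate.

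First, the Busemann value $h(x,y_n;\omega')$ is \emph{not} constant on $B_n(D)$ once the rank exceeds one, so you cannot read off $\nu_x(B_n(D))$ from a single value of the cocycle; an integral must be controlled. Second, and more seriously, the inequality you assert points the wrong way for your conclusion: if $\mu=h(x,y_n;\omega')\preceq n\lambda_0$ in the dominance order, then the Radon--Nikodym density $\chi(h(y_n,x;\omega'))=\chi(-\mu)=\chi(-n\lambda_0)\,\chi(n\lambda_0-\mu)$ is \emph{larger} than $\chi(-n\lambda_0)$, not smaller. What you would actually need is a \emph{lower} bound on $\mu$ of the form $\mu\succeq n\lambda_0 - C$ with $C$ independent of $n$, so that $\chi(-\mu)\le M\,\chi(-n\lambda_0)$; you neither state nor prove this, and in higher rank the defect $n\lambda_0-\mu$ is not obviously bounded on all of $B_n(D)$. (Already for $D=C_n$ one has $\mu=n\lambda_0$ exactly, contradicting ``strictly smaller''.) Third, the claimed decomposition of $B_n(D)$ as a disjoint union of basic clopens $\Omega_{y_n}(z)$ is not right: $B_n(D)$ is itself a single clopen determined by the initial chamber of the sector at $y_n$, not a disjoint union indexed by vertices of the cone.

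In short, the skeleton is sound, but the decay of $\nu_x(\Omega\setminus A_n)$ is precisely where the analytic work lies, and the argument you sketch does not supply it. This is why the paper defers to \cite{RT21}.
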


\begin{proof}
    See \cite[Theorem 6.2]{RT21}.
\end{proof}

\begin{cor}\label{cor:opposées de mesure pleine}
    The subset $\Omega^{2,\op}\subset \Omega^2$ of opposite chambers is of full $\nu_x\otimes \nu_x$-measure.
\end{cor}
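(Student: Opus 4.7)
The plan is to deduce this directly from the preceding proposition by a Fubini argument on the complement. First I would note that the set $\Omega^{2,\op}$ is a Borel (in fact measurable) subset of $\Omega^2$: opposition of two chambers at infinity is detected by their $W_0$-distance being equal to $w_0$, which is a condition that can be checked on the topology generated by the clopen sets $\Omega_x(y)$ (equivalently, it can be characterized via projections to links of vertices using Lemma \ref{lem: opp ds link ->o ds apt}), so its complement $\Omega^2\setminus\Omega^{2,\op}$ is Borel.

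Next I would apply Tonelli's theorem to the indicator of the complement. For every $\omega\in\Omega$, the $\omega$-section of $\Omega^2\setminus\Omega^{2,\op}$ is precisely $\Omega\setminus\Omega^{\op}(\omega)$, which by the preceding proposition satisfies $\nu_x(\Omega\setminus\Omega^{\op}(\omega))=0$. Therefore
\[
(\nu_x\otimes\nu_x)(\Omega^2\setminus\Omega^{2,\op})=\int_\Omega \nu_x\bigl(\Omega\setminus\Omega^{\op}(\omega)\bigr)\,d\nu_x(\omega)=0,
\]
which is exactly the claim.

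There is essentially no obstacle here; the only minor point worth checking is the Borel measurability of $\Omega^{2,\op}$, but this is immediate from the compact topology on $\Omega$ described just before the subsection, together with the combinatorial characterization of opposition. Everything else is a direct invocation of the previous proposition and Tonelli.
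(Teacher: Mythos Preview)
Your argument is correct and is essentially the same as the paper's: both use Fubini/Tonelli together with the preceding proposition that $\Omega^{\op}(\omega)$ has full $\nu_x$-measure for every $\omega$, the only cosmetic difference being that you integrate the indicator of the complement while the paper integrates the indicator of $\Omega^{2,\op}$ directly. Your added remark on measurability is a reasonable extra check that the paper leaves implicit.
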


\begin{proof}
    Since 
    $$\Omega^{2,\op}=\bigsqcup_{\omega\in\Omega}\{\omega\}\times\Omega^\op(\omega),$$
    we have
    $$\nu_x\otimes\nu_x(\Omega^{2,\op})=\int_\Omega \nu_x(\Omega^\op(\omega))\d \nu_x(\omega)=\int_\Omega 1\d\nu_x(\omega)=1.$$
\end{proof}

We now define a measure in the same measure class as $\nu_x$ that is $\Gamma$-invariant, generalizing \cite{BCL19} to all types of buildings.

\begin{pro}\label{dummy1}
    For $(\omega, \omega')\in\Omega^{2,\mathrm{op}}$ and $x\in V_P$, the quantity $$\beta_x(\omega,\omega'):=h(x,z;\omega)+h(x,z;\omega'),$$
with $z$ a point in the apartment containing $\omega$ and $\omega'$, does not depend on $z$. 

Furthermore, for every vertex $y\in V_P$, we have
$$\beta_x(\omega,\omega')-\beta_y(\omega,\omega') =h(x,y;\omega)+h(x,y;\omega').$$
\end{pro}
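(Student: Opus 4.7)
The plan is to reduce the first claim to a cancellation that takes place entirely inside the apartment $\mathcal A$ containing $\omega$ and $\omega'$, using the cocycle property of $h$. For $z, z' \in V_P \cap \mathcal A$, applying the cocycle identity to each chamber at infinity gives
\[
[h(x,z;\omega)+h(x,z;\omega')] - [h(x,z';\omega)+h(x,z';\omega')] = h(z',z;\omega)+h(z',z;\omega'),
\]
so the $z$-independence of $\beta_x$ is equivalent to showing that the right-hand side vanishes for all $z, z' \in V_P \cap \mathcal A$. This is an intrinsic statement about the apartment.

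For this internal computation, I would fix a type-rotating identification $\mathcal A \simeq E$ sending $\omega$ to $\partial_\infty \Lambda$, so that $\omega'$ corresponds to $\partial_\infty(w_0 \Lambda)$. Choosing $w$ deep enough in the direction of $\omega$, both $w-z$ and $w-z'$ lie in $\overline{\Lambda} = P^+$, so the definition of $\sigma$ directly gives $h(z',z;\omega) = z - z'$. A parallel calculation with a point $w'$ deep in the direction of $\omega'$ — noting that moving a vector from $\overline{w_0 \Lambda}$ into $P^+$ requires applying $w_0$ — yields $h(z',z;\omega') = w_0(z-z')$. Hence the required cancellation amounts to $(1+w_0)(z-z') = 0$: immediate when $w_0$ acts as $-\id$ on $E$, and in the remaining types (e.g.\ $\tilde A_n$ with $n \geq 2$) requiring additional care, presumably by interpreting $\beta_x$ in the quotient $P/(1+w_0)P$ compatible with the later use of $\chi$, whose defining weight $2\rho_q = \sum_{\alpha\in\Phi^+}(\log q_\alpha)\,\alpha$ is $w_0$-anti-invariant.

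Once $\beta_x$ is known to be $z$-independent, the second formula is pure cocycle bookkeeping: the identities $h(x,z;\omega) = h(x,y;\omega)+h(y,z;\omega)$ and $h(x,z;\omega') = h(x,y;\omega')+h(y,z;\omega')$, applied with a common $z \in \mathcal A$ and summed, give $\beta_x(\omega,\omega') = h(x,y;\omega) + h(x,y;\omega') + \beta_y(\omega,\omega')$, which is the stated identity. The main obstacle is the apartment-internal cancellation above; everything else is routine.
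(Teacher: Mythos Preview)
Your reduction and in-apartment computation are correct, and you have in fact put your finger on a genuine issue with the proposition as stated rather than with your argument. The identity you are led to, $(1+w_0)(z-z')=0$, is exactly what $z$-independence of $\beta_x\in P$ amounts to, and it is false whenever $w_0\neq-\id$ on $E$ (so in particular in type $\tilde A_n$, $n\ge 2$, which is precisely the case of the cited reference). Concretely, if $x$ itself lies in the apartment $\mc A$, then evaluating with $z=x$ gives $\beta_x=0$, while another $z\in\mc A\cap V_P$ gives $(1+w_0)(z-x)$, generically nonzero in $P$. The paper does not address this: its proof is a bare citation of \cite[Lemma 6.9]{BCL19} with the remark that the argument extends verbatim.

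Your proposed remedy is the right one and is all that the sequel needs: since $w_0$ sends $\Phi^+$ to $-\Phi^+$ and $q_{w\alpha}=q_\alpha$ for $w\in W$, one has $\chi(w_0\lambda)=\chi(\lambda)^{-1}$, hence $\chi$ is trivial on $(1+w_0)P$; thus $\chi(-\beta_x(\omega,\omega'))$ is well defined independently of $z$, which is the only way $\beta_x$ is ever used. Equivalently, $\beta_x$ is well defined in $P/(1+w_0)P$. Your derivation of the second formula via the cocycle identity for $h$ is complete as written, and that identity already holds on the nose in $P$ once the same $z$ is used on both sides, so it also descends to $P/(1+w_0)P$.
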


\begin{proof}
    This is Lemma 6.9 of \cite{BCL19}. More precisely, while \cite[Lemma 6.9]{BCL19} is stated and proven in the case of $\tilde A_2$ buildings, the proof actually works for a general affine building without modification.
\end{proof}

Now for a fixed vertex $x\in V_P$, we define the measure $m_x$ on $\Omega\times \Omega$ by the following formula: 
$$\mathrm dm_x(\omega,\omega')=\chi(-\beta_x(\omega,\omega'))\mathrm d\nu_x(\omega)\mathrm d\nu_x(\omega').$$
Note that $\beta_x$ is defined $\nu_x\otimes \nu_x$-a. e. by Proposition \ref{cor:opposées de mesure pleine}, which is enough for our purpose here.

Now if $x,y\in V_P$, by \ref{Radon-Nikodym nu} and \ref{dummy1}, we have
\begin{eqnarray*}
    \frac{\mathrm dm_x}{\mathrm dm_y}(\omega,\omega')=\chi(-(\beta_x(\omega,\omega')-\beta_y(\omega,\omega')))\frac{\mathrm d\nu_x}{\mathrm d\nu_y}(\omega)\frac{\mathrm d\nu_x}{\mathrm d\nu_y}(\omega')=1,
\end{eqnarray*}

so that $m_x=m_y$ for any $x,y$. \medskip

Thus the measure $m_x$ is independent of the choice of $x$. We get that this measure is also $\Aut(X)$-invariant, as for all $g\in\Aut(X)$ we have $g\nu_x=\nu_{gx}$ and $\beta_{gx}(g\omega,g\omega') = \beta_x(\omega,\omega')$, hence $gm_x = m_{gx} = m_x.$

We will call this common value $m$.

\begin{dfn}
    We let $\mathscr F$ be the space of simplicial embeddings of the Coxeter complex $\Sigma$ in the building $X$, equipped with the compact-open topology.
\end{dfn}

Following \cite{BFL23}, if we identify $\mathscr F$ with $\Omega^{2,\op}\times \Sigma,$ we may endow $\mathscr F$ with the measure $\zeta=m\otimes \lambda,$ where $\lambda$ is the counting measure on $\Sigma$. Besides being invariant under both actions on $\mathscr F,$ $\zeta$ is interesting because it is also a prouniform measure, which were first defined in \cite{BFL23}.

\subsection{Prouniform measures}\label{ssec: Prouniform measures}
This part is adapted from \cite[Section 1.1]{BFL23}, where prouniform measures were first defined. They prove to be the right framework in the proofs of Section \ref{sec: Strong boundaries and Weyl groups}. However, the fact that we are working with general buildings means that the $q_i$, the thickness of the panels of type $i$, might not all be equal, which complicates things. In particular, we will need to restrict ourselves only to \emph{type preserving} isometries, as predicted in \cite[Remark 3.21]{BFL23}. In other words, we assume here that all our simplicial complexes $X$ are additionally endowed with a \emph{type function} from the set of vertices of $X$ to a set of types, which we will always take to be $\{0,\ldots ,n\}$ for some $n$. We will then say for $X,Y$ simplicial complexes with a type function to the same type set that a map $f:X\to Y$ is type preserving if this type function is preserved by $f$.\medskip

For this part, fix a Euclidean Coxeter complex $\Sigma$, with the type function defined before. We will say that a finite simplicial complex $Y$ \emph{has} $P$ if it is a finite convex subset of $\Sigma$, in the sense that it is an intersection of half-apartments, and if it contains a vertex of type $0$.

\begin{dfn}\label{dfn: P-sym}
    Let $X$ be a typed simplicial complex. We say $X$ is $P$\emph{-symmetric} if for any connected finite non-empty simplicial complexes $Y,Y'$ having $P$, along with a type-preserving isometric embedding $i:Y'\to Y$, given a type-preserving isometric embedding $\alpha:Y'\to X$, the size of the set
    $$\{\beta:Y\to X,\:\beta \text{ is a type-preserving isometric embedding and }\alpha=\beta\circ i\}$$
    of type-preserving isometric embeddings of $Y$ into $X$ which extend $\alpha$ is positive and does not depend on the choice of $\alpha$.
\end{dfn}

From now on, unless otherwise specified, all our isometries and isometric embeddings of typed simplicial complexes will be type-preserving. Let $\Isom(Z,Z')$ be the set of (type preserving) isometries from $Z$ to $Z'$.

\begin{pro}\label{pro: buildings are symmetric}
    A regular building $X$ of type $\Sigma$ is $P$-symmetric.
\end{pro}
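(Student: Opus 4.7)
The plan is to induct on the number $n$ of chambers of $Y$ outside the image of $i$. When $n=0$, the only extension of $\alpha$ is $\alpha\circ i\inv$, giving a count of $1$, independently of $\alpha$. For the inductive step, fix an enumeration $C_1,\ldots,C_n$ of the chambers of $Y\setminus i(Y')$, set $W_j:=i(Y')\cup\{C_1,\ldots,C_j\}$, and factor the extension count as $\prod_{j=1}^n N_j$, where $N_j$ is the number of admissible choices for $\beta(C_j)$ given any partial extension of $\alpha$ defined on $W_{j-1}$. It suffices to show that each $N_j$ is a positive constant depending only on the combinatorial type of $(W_{j-1},C_j)$, not on the particular partial extension.

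For each $j$, let $P_1,\ldots,P_r$ be the panels of $C_j$ shared with $W_{j-1}$, write $s_k$ for the cotype of $P_k$, and let $D_k\in W_{j-1}$ be the chamber adjacent to $C_j$ via $P_k$. When $r=1$: by regularity, the image panel $\beta(P_1)$ is contained in exactly $q_{s_1}+1$ chambers of $X$, one of which is $\beta(D_1)$, and the remaining $q_{s_1}$ each yield a valid type-preserving isometric extension (the only new incidence constraint is the sharing of $\beta(P_1)$, and none of the candidates can coincidentally coincide with another $\beta(D_k)$ since $P_1$ is a panel of no chamber of $W_{j-1}$ other than $D_1$). Hence $N_j=q_{s_1}$. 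When $r\geq 2$: any chamber is uniquely determined by two of its panels sharing a codimension-$2$ face, so $\beta(C_j)$ is forced. Concretely, let $C^\ast:=\proj_R(\beta(D_2))$ where $R$ is the $s_1$-residue of $\beta(D_1)$; this is a well-defined chamber of $X$, and a Weyl-distance argument shows that it is adjacent to every $\beta(D_k)$ via the panel $\beta(P_k)$. Hence $N_j=1$.

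The main obstacle is Case $r\geq 2$: one must verify that the projection-defined candidate $C^\ast$ simultaneously respects \emph{all} the shared panels $\beta(P_k)$, not just the first two. The key observation is that, even when the intermediate set $W_{j-1}$ fails to be convex in $\Sigma$, the fact that any gallery-word reduces to a unique Weyl element forces $\delta_X(\beta(D_1),\beta(D_k))=\delta_\Sigma(D_1,D_k)=s_1 s_k$ for every $k\geq 2$ (the image of a gallery in $W_{j-1}$ from $D_1$ to $D_k$ gives a word in $W$ whose reduction is the same in $X$ as in $\Sigma$). Combined with the uniqueness of minimal galleries of a given reduced type in rank-$2$ residues of a building, this forces $C^\ast$ to be the middle chamber of the minimal gallery from $\beta(D_1)$ to each $\beta(D_k)$, yielding all required panel incidences. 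The total count $\prod_j N_j$ thus depends only on $(Y,Y',i)$ and on the parameters $q_s$, and is positive by thickness of $X$ (each $q_s\geq 2$), which establishes $P$-symmetry.
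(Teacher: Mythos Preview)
Your argument has a genuine error in the central claim that the Weyl distance in $X$ between two already-placed chambers equals the Weyl distance in $\Sigma$. You justify this by saying that the gallery word ``reduces the same way in $X$ as in $\Sigma$'', but this is false in a thick building: for a gallery of type $(s_{i_1},\ldots,s_{i_n})$ that is \emph{not} reduced, the Weyl distance $\delta_X$ between its endpoints is some element obtainable from $s_{i_1}\cdots s_{i_n}$ by deletions, but \emph{which} element depends on the actual gallery in $X$, not just on its type. In a thin building ($\Sigma$) the full reduction always occurs; in a thick building it generally does not. Consequently, in your $r\geq 2$ step the hypothesis $\delta_X(\beta(D_1),\beta(D_k))=s_1s_k$ can fail, and in your $r=1$ step not all $q_{s_1}$ local choices extend to isometric embeddings of $Y$.

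Here is a concrete counterexample. In an $\tilde A_2$ building with uniform thickness $q$, take $Y$ to be the hexagon of six chambers around a vertex $v$ and $Y'=D_0$ a single chamber (both are convex and contain a type-$0$ vertex). The correct number of type-preserving isometric embeddings of $Y$ extending a fixed $\alpha:D_0\to X$ equals the number of apartments of $\lk_X(\alpha(v))$ containing the chamber $\alpha(D_0)\cap\lk_X(\alpha(v))$, which is $q_{w_0}=q^3$. Your procedure, adding $C_1,\ldots,C_5$ around the hexagon, gives $N_1N_2N_3N_4N_5=q\cdot q\cdot q\cdot q\cdot 1=q^4$. The overcount comes at step~$4$: among the $q$ chambers adjacent to $\beta(C_3)$ across the relevant panel, only one has $\delta_X(\beta(C_4),\beta(D_0))=s_1s_2$; the other $q-1$ give $\delta_X=s_2s_1s_2$, and for those there is \emph{no} chamber $C^\ast$ adjacent to both $\beta(C_4)$ and $\beta(D_0)$ with the required panel types, so the hexagon cannot close up.

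The underlying reason your scheme breaks is that the intermediate complexes $W_{j-1}$ are not convex, so a ``partial extension'' need not sit inside any apartment of $X$, and local panel-incidence constraints do not control global Weyl distances. The paper's proof sidesteps this by adding a single \emph{vertex} at a time and reducing (via several lemmas) to the situation where $Z=\Conv(Y\cup\{z\})$ with $\lk(z)\cap Y$ a simplex of codimension at most~$1$; at that point the count is read off directly from the link of a single simplex in $X$, where convexity and regularity give a clean answer.
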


The proof of this fact is inspired from the proof in the dimension $2$ case in a paper in preparation by Stefan Witzel and Jean Lécureux.

\begin{proof}
    Let $Y\subset Z\subset \Sigma$ be finite convex simplicial complexes.     First off, since $Y$ is convex, the image of $Y$ by an embedding into $X$ will be contained in an apartment by \cite[Theorem 11.53]{AB08}. We may then choose one such apartment, and extend $Y$ to $Z$ in that apartment, so that the number of isometries extending any embedding of $Y$ to $Z$ is indeed positive.\medskip
    
    We now want to prove that for any embedding $\alpha:Y\to X$, the number of embeddings $\beta:Z\to X$ which extend $\alpha$ does not depend on the choice of $\alpha$. We start by reducing the problem to a simpler one.
    
    \begin{lem}\label{lem: pas de 1 dans la preuve de P-symmetricité}
        We may (and will) assume that $Z=\Conv(Y\cup\{z\})$ with $d(z,Y)=1$, where $d$ is the combinatorial distance in the $1$ skeleton of $X$.
    \end{lem}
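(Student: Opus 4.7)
The plan is to proceed by induction on $|V(Z) \setminus V(Y)|$, the number of vertices of $Z$ not in $Y$. The base case $V(Z) = V(Y)$ forces $Y = Z$, because a convex subcomplex of $\Sigma$, being an intersection of half-apartments, contains a simplex $\sigma$ if and only if each vertex of $\sigma$ lies in the subcomplex; in this case the $P$-symmetry statement is trivial. In the induction step, I will replace the pair $(Y, Z)$ by the intermediate pair $(Y, Z_1)$, where $Z_1 = \Conv(Y \cup \{z\})$ for a well-chosen boundary vertex $z$, together with the residual pair $(Z_1, Z)$; both will have strictly smaller complexity.

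To produce $z$, pick any $v \in V(Z) \setminus V(Y)$ and take a combinatorial geodesic in the $1$-skeleton of $\Sigma$ from $v$ to a nearest vertex of $Y$. Combinatorial convexity of $Z$ (as an intersection of half-apartments) forces this path to remain in $Z$, and the last vertex along it that still lies outside $V(Y)$ is a vertex $z \in V(Z) \setminus V(Y)$ at combinatorial distance $1$ from $V(Y)$.

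Set $Z_1 := \Conv(Y \cup \{z\})$. This is a convex subcomplex of $Z$, has $P$ (it is finite and inherits a type-$0$ vertex from $Y$), and strictly contains $Y$ since $z \in V(Z_1) \setminus V(Y)$. If $Z_1 = Z$, we are already in the case asserted in the lemma. Otherwise, $V(Z_1) \subsetneq V(Z)$ by the vertex-set characterization recalled in the base case, so the induction hypothesis applies to both $(Y, Z_1)$ and $(Z_1, Z)$: the number $N_1$ of extensions of $\alpha : Y \to X$ to $Z_1$ is independent of $\alpha$, and the number $N_2$ of extensions of any $\gamma : Z_1 \to X$ to $Z$ is independent of $\gamma$. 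Since every extension $\beta : Z \to X$ of $\alpha$ factors uniquely through its restriction $\gamma := \beta|_{Z_1}$, the total count of extensions of $\alpha$ to $Z$ equals $N_1 N_2$, independent of $\alpha$, closing the induction.

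The main obstacle is the combinatorial-convexity step: one needs to be sure that a combinatorial geodesic in the $1$-skeleton of $\Sigma$ from $v$ to $V(Y)$ can be chosen to lie inside $Z$. This is the standard fact that convex subcomplexes of a Euclidean Coxeter complex, characterized as intersections of half-apartments, are closed under taking combinatorial geodesics between their vertices; the same fact underlies the clean characterization of convex subcomplexes by their vertex sets used in the base case and in the strict-decrease step.
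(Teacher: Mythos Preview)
Your proof is correct and follows the same strategy as the paper: both construct a chain of intermediate convex subcomplexes between $Y$ and $Z$, each obtained from the previous one by adjoining the convex hull of a single vertex at combinatorial distance~$1$, and then multiply the extension counts. Your single induction on $|V(Z)\setminus V(Y)|$ is a bit cleaner than the paper's two-stage reduction (first shrinking all distances to $\le 1$ via the filtration $Y_k$, then enumerating the remaining vertices). One small remark: the ``combinatorial convexity'' fact you invoke is stronger than what you actually need---since $Z$ is a convex subset of Euclidean space it is topologically connected, hence its $1$-skeleton is connected, and that alone already produces an edge of $Z$ with one endpoint in $V(Y)$ and the other outside, giving the desired vertex $z$.
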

    
    \begin{proof}
        Since $Z$ is finite, it is the convex hull of all of its points. Letting $Y_k$ be the convex hull of $Y$ and all the points in $Z^0$ at distance less than $k$ of $Y$, we have $Y_0=Y$, $Y_k=Z$ for $k$ large enough, and $d(z,Y_k)\le 1$ for any $z\in Y_{k+1}^0$ (note that we used convexity here). Thus by induction, we need only consider the case where $d(z,Y)\le 1$ for any $z\in Z^0$. Enumerate the points in $Z^0\setminus Y^0$ by $z_1,\ldots, z_k$, and let $Z_i$ be the convex hull of $Y$ and the points $z_1,\ldots ,z_i$. Again, $Z_0=Y$ and $Z_k=Z$. For a given $i<k$, either $Z_i=Z_{i+1}$ and there is nothing to do, or $Z_{i+1}=\Conv(Z_i,z_{i+1})$ and $d(z_{i+1},Z_i)>0$. In the latter case, $d(z_{i+1},Z_i)=1$, as it is an integer and $d(z_{i+1},Z_i)\le d(z_{i+1},Y)\le 1$. By induction again, we may assume that $Z=\Conv(Y\cup\{z\})$ and $d(z,Y)=1$, as wanted.
    \end{proof}

    \begin{lem}\label{lem: lk(z) cap Y is a simplex}
        The (non-empty) subcomplex $A:=\lk(z)\cap Y$ is a simplex.
    \end{lem}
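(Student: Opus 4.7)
The plan is to identify $A$ with the subcomplex of faces of a single simplex $\sigma_{\max}$ of $\Sigma$ whose vertex set is $V(z) := \{v \in Y : \{v,z\} \text{ is an edge of }\Sigma\}$, a set that is non-empty because $d(z,Y)=1$.

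First I would prove the pairwise claim that $\{z,v_1,v_2\}$ is a simplex of $\Sigma$ for any $v_1, v_2 \in V(z)$. I use two classical properties of affine Coxeter complexes: the closed star $\st(z)$ is a convex polytope in the Euclidean apartment $\Sigma$, and $\Sigma$ has the flag property (pairwise-adjacent vertices span a simplex). Given these, the segment $[v_1,v_2]$ lies in $\st(z)$ (by convexity of the star) and in $Y$ (by convexity of $Y$). If this segment were to meet the interior of some chamber $C$ of $\st(z)$, the fact that $Y$ is a subcomplex containing a point of $C^\circ$ would force $C \subset Y$; but every chamber of $\st(z)$ contains $z$ as a vertex, and this would contradict $z \notin Y$. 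Hence the segment lies in the closure of a single chamber $C$ of $\st(z)$, and $\{z,v_1,v_2\} \subset C$ is a simplex.

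Applying the flag property, the pairwise claim upgrades to the statement that $V(z) \cup \{z\}$ is the vertex set of a simplex of $\Sigma$. Let $\sigma_{\max}$ denote the corresponding simplex with vertex set $V(z)$; by convexity of $Y$, its geometric realization (the convex hull of $V(z)$) lies in $Y$, so $\sigma_{\max} \subset Y$. Every face of $\sigma_{\max}$ thus lies in $Y$ and, together with $\{z\}$, spans a face of $\sigma_{\max}\cup\{z\}$, so it is an element of $A$. Conversely, any $\sigma \in A$ has its vertex set contained in $V(z)$ (each vertex of $\sigma$ being adjacent to $z$, since $\sigma \cup \{z\}$ is a simplex), so $\sigma$ is a face of $\sigma_{\max}$. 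This shows that $A$ is exactly the simplex $\sigma_{\max}$.

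The main obstacle is the pairwise step, which rests on the two background facts about affine Coxeter complexes mentioned above. Neither is stated explicitly in the paper, so a brief recollection (e.g.\ by citing the relevant chapters of \cite{AB08}) would be needed; one must also dispatch the degenerate case where $[v_1,v_2]$ travels along the codimension-one skeleton of $\st(z)$ rather than entering any chamber interior, but any such segment is forced either to contain $z$ (since interior walls of $\st(z)$ pass through $z$) or to sit in a single chamber closure, so the conclusion is preserved.
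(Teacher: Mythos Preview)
Your argument is correct, but it is considerably more elaborate than the paper's. The paper dispatches the lemma in two lines via a combinatorial contradiction: if $A$ contained two vertices $a,b$ at combinatorial distance at least $2$, then $(a,z,b)$ would be a geodesic path in the $1$-skeleton, forcing $z\in\Conv(\{a,b\})\subset\Conv(A)\subset Y$ and contradicting $z\notin Y$. Thus all vertices of $A$ are pairwise adjacent, and the (implicit) flag property of $\Sigma$ yields that $A$ is a simplex.

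You reach the same pairwise adjacency via Euclidean segments inside the polytope $\st(z)$, then invoke the flag property explicitly. Both proofs rest on the same two pillars (convexity of $Y$ and the flag structure of $\Sigma$), but the paper's combinatorial version sidesteps the case analysis about segments travelling along the codimension-one skeleton of $\st(z)$ and requires no appeal to the geometry of the closed star. Your route has the virtue of making the use of the flag property and the inclusion $\sigma_{\max}\subset Y$ fully explicit, which the paper leaves to the reader; but for this lemma the combinatorial argument is both shorter and cleaner.
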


    \begin{proof}
        Assume $A$ contains two points $a,b$ at distance at least $2$ from each other. Then the path $(a,z,b)$ is a path of length $2$ joining $a$ and $b$, so $z\in\Conv(A)$. Since $Y$ is convex and $A\subset Y$, we have $z\in\Conv(A)\subset Y$, a contradiction.
    \end{proof}

    \begin{lem}
        We may assume that $A$ is of codimension at most $1$ in $Y$.
    \end{lem}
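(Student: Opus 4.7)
The plan is to interpolate the extension $Y\hookrightarrow Z$ through a finite chain of convex subcomplexes so that each step is a single-vertex extension with codimension-at-most-$1$ attachment, and then appeal to the multiplicativity of extension counts: establishing the $P$-symmetry identity for each codim-$\leq 1$ step is enough to get it for the composite. This legitimizes the assumption $\dim A \geq \dim Y - 1$ in the sequel. Observe first that for $\dim Y \leq 1$ the non-empty simplex $A \subseteq Y$ automatically has codimension at most $1$, so there is nothing to do; I henceforth assume $\dim Y \geq 2$ and $\dim A \leq \dim Y - 2$.

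To construct the first interpolation step, I look for a top-dimensional simplex $C$ of $Z$ (a ``chamber'' of $Z$) not contained in $Y$ and sharing a panel $F$ with a chamber $D$ of $Y$. Such a $C$ is produced by a gallery argument. Since $Z$ is a finite convex subcomplex of the Coxeter complex $\Sigma$, its chamber system is gallery-connected; pick a chamber of $Y$ and a chamber of $Z \setminus Y$ and let $\gamma$ be a minimal gallery between them in $Z$. By convexity of $Y$, the chambers of $\gamma$ lying in $Y$ form an initial segment, so the first chamber $C$ of $\gamma$ outside $Y$ is gallery-adjacent to a chamber $D$ of $Y$, sharing with it a panel $F$. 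Let $v$ be the vertex of $C$ opposite $F$. Since $C \not\subseteq Y$ but $F \subseteq Y$, we get $v \notin Y$; and since $F \subseteq \lk(v)$, we have $F \subseteq \lk(v) \cap Y$, so $\lk(v)\cap Y$ has codimension at most $1$ in $Y$, as required.

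Setting $Y' = \Conv(Y \cup \{v\}) \subseteq Z$, the extension $Y \hookrightarrow Y'$ is a single-vertex extension at combinatorial distance $1$ with codim-$\leq 1$ attachment, hence of the form to which we wish to reduce. Any type-preserving embedding $Z \to X$ extending $\alpha:Y \to X$ restricts to a type-preserving embedding $Y' \to X$ extending $\alpha$, so counts factorize through $Y'$. The residual extension $Y' \hookrightarrow Z$ satisfies $|Z^{(0)} \setminus (Y')^{(0)}| < |Z^{(0)} \setminus Y^{(0)}|$; applying Lemma \ref{lem: pas de 1 dans la preuve de P-symmetricité} to decompose it into single-vertex distance-$1$ extensions and inducting on this strictly decreasing count reduces each such step, in turn, to the codim-$\leq 1$ case. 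The main obstacle is extracting the gallery-adjacent chamber $C$, which rests on the gallery-connectedness of chamber systems of convex subcomplexes of Coxeter complexes together with the convexity of $Y$; once it is in hand, the rest is straightforward bookkeeping.
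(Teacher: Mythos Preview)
Your approach is genuinely different from the paper's and is largely sound. The paper works inside the spherical Coxeter complex $\lk A$: it takes a geodesic from a vertex of $Y\cap\lk A$ to $z$ inside $\lk A$, uses $\pi$-convexity of $Z\cap\lk A$ to keep this path inside $Z$, and then recurses on the dimension of the link to peel off vertices one at a time. Your gallery argument in $Z$ is more direct and avoids that recursion entirely, which is a real simplification.

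There is, however, a gap. Your gallery argument tacitly assumes $\dim Y=\dim Z$: you need a $Z$-chamber contained in $Y$ to start the gallery $\gamma$, and no such chamber exists when $\dim Y<\dim Z$. You do not verify that $\mathrm{codim}_Y A\ge 2$ forces $\dim Y=\dim Z$, and this is not obvious---the convex hull $Z=\Conv(Y\cup\{z\})$ can jump dimension even when $A$ is small relative to $Y$. The repair is easy and in the same spirit: if $\dim Z=\dim Y+1$, take any $Z$-chamber $C$ containing a maximal simplex $D$ of $Y$ (this exists since $Z$ is pure); then $D$ is a panel of $C$, and the remaining vertex $v$ of $C$ is not in $Y$ (as $Y$ is full in $Z$), while $\lk(v)\cap Y\supseteq D$ has codimension $0$ in $Y$. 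With this case added, your interpolation scheme goes through.
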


    \begin{proof}
        Let $k$ be the codimension of $A$ in $Y$, and let $B=Z\cap\lk A$. Since $Z$ is convex, $B$ must be $\pi$-convex in $\lk A$, that is, any geodesic path in $\lk A$ joining two vertices of $B$ which are not opposite must be contained in $B$. 

        If $k\le 1$, we are done. Otherwise, $B\cap Y$ is of positive dimension, so it is not reduced to a point. There is then a point $b\in B\cap Y$ which is not opposite to $z$ ($\lk A$ is thin). Let $b=b_1,\ldots,b_n=z$ be a geodesic path in $\lk A$, which must thus be contained in $B\subset Z$. If we now let $Y_i$ be the convex hull of $Y$ and $b_1,\ldots,b_i$ by the same reasoning as Lemma \ref{lem: pas de 1 dans la preuve de P-symmetricité}, then either $Y_i=Y_{i+1}$ or $d_{\lk A}(b_{i+1},Y_i)=1$. By induction, we only need to treat the case where $z_1\in\lk A$ is such that $d_{\lk A}(z_1,Y)=1$. 

        By the exact same reasoning as Lemma \ref{lem: lk(z) cap Y is a simplex}, $A_1:=\lk_{\lk A} z_1\cap Y$ is again a simplex, of dimension strictly smaller than that of $A$. If $A_1$ is of codimension $1$ in $A$, then $\lk_\Sigma z_1\cap Y$ is of codimension $1$ in $Y$ as $\lk_\Sigma z_1\cap Y\supset\Conv(A_1,A)$. Otherwise, we repeat the process in $\lk_{\lk A}z_1$, which has a strictly smaller dimension than $\lk A$. This process must thus halt after a finite number of iterations, giving us a sequence $z_i$ for which $\lk_\Sigma z_{i+1}\cap Y_i$ is of codimension $1$ in $Y_i$, where $Y_i$ is the convex hull of $Y$ and $z_1,\ldots ,z_i$. This concludes the lemma.
    \end{proof}

    With this reduction in hand, we are now ready to move on to the actual proof of Proposition \ref{pro: buildings are symmetric}. We need to consider two cases, the first one being when $Y$ and $Z$ have different dimensions, i.e. $\dim A=\dim Y$. In that case, any point $x$ in the link of $\alpha(A)$ defines an isometry $\beta':Y\cup\{z\}\to X$ by $\beta'|_Y=\alpha$ and $\beta'(z)=x$. This extends to at most one isometry $\beta:Z\to X$, as $Z=\Conv(Y\cup\{z\})$. There always is one such isometry, as $\alpha(Y)$ is included in a half-apartment, and $x$ is included in a chamber which is next to that half-apartment, which means that $\alpha(Y)\cup\{x\}$ is included in an apartment by \cite[Theorem 11.53]{AB08}. But now the number of points in the link of $\alpha(A)$ does not depend on $\alpha$, only on the type of $A$ and the $q_i$'s.

    Assume now that the dimension of $Y$ and $Z$ are the same. Then $\dim A=\dim Y-1$. If $Z$ had $3$ or more simplices of maximal dimension containing $A$, we would have $\dim Z\ge \dim A+2$ since $Z$ is convex, $\Sigma$ is thin and $Z\ne Y$. Since $Y$ is convex, it has exactly one simplex of maximal dimension containing $A$. Let $y\in Y$ be such that $\Conv(A\cup\{y\})$ is that simplex. In order to define an extension of $\alpha$ to $Y\cup\{z\}$, we need only to determine the image of $z$. Let $x\in X$ be a potential candidate for the image of $z$. First of all, we need $x\ne\alpha(y)$. Now if $x$ and $\alpha(y)$ are not opposite in $\lk \alpha(A)$, then $\Conv(\alpha(A)\cup\{\alpha(y),x\})$ is of dimension at least $\dim A+2$, so $x$ cannot be the image of $z$ by any isometry extending $\alpha$. Conversely, if $x$ and $\alpha(y)$ are opposite in $\lk(\alpha(A))$, then $\beta'$ defined to be $\alpha$ on $Y$ and to send $z$ to $x$ is a type-preserving isometry from $Y\cup \{z\}$ to $\alpha(Y)\cup \{x\}$, since $y$ and $z$ are opposite in $\lk(A)$, it is also a metric isometry.
    This isometry extends to a unique isometry $\beta:Z\to X$ by the same argument as for the other case. Hence the number of isometries $\beta:Z\to X$ extending $\alpha$ is the number of points in $\lk \alpha(A)$ which are opposite to $\alpha (y)$, a quantity which again does not depend on $\alpha$ since $\alpha$ is type preserving, but only on the type of $A$ and $y$.

    Hence in either case, the number of extensions of $\alpha$ from $Y$ to $Z$ does not depend on $\alpha$, and only on $Y$ and $Z$, as wanted. 
\end{proof}

Fix now a regular building $X$ of type $\Sigma$, and an origin $o\in V_Q$. 

For $Y,Y'$ simplicial complexes satisfying $P$, and $i:Y'\to Y$ a (type-preserving) isometry, let $[Y,Y']_i$ be the number of extensions of some ${\alpha:Y'\to X}$ to $Y$, which does not depend on the choice of $\alpha$ by $P$-symmetricity of $X$. When $i$ is the inclusion, we will simply write $[Y,Y']$.

\begin{lem}
    Let $Y$ be a finite simplicial complex satisfying $P$.
    \begin{enumerate}
        \item If $i:Y\to Y$ is an isometry, $[Y,Y]_i=1$.
        \item For finite connected $P$-simplicial complexes $Y',Y''$ with isometries $i:Y''\to Y'$ and $j:Y'\to Y$, we have
        $$[Y,Y'']_{j\circ i}=[Y,Y']_j[Y',Y'']_i.$$
        \item For every $y,y'\in Y$ of the same type, 
        $$[Y,\{y\}]=[Y,\{y'\}].$$
    \end{enumerate}
\end{lem}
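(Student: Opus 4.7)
For (1), the map $i:Y\to Y$ is an injective simplicial map of the finite set $Y$ to itself, hence bijective; given any $\alpha:Y\to X$, the unique candidate $\beta=\alpha\circ i^{-1}$ is a type-preserving isometric embedding satisfying $\beta\circ i=\alpha$, so $[Y,Y]_i=1$.

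For (2), I would set up a bijection between the set of extensions $\beta:Y\to X$ with $\beta\circ(j\circ i)=\alpha$ and the set of pairs $(\gamma,\beta)$ with $\gamma:Y'\to X$ satisfying $\gamma\circ i=\alpha$ and $\beta:Y\to X$ satisfying $\beta\circ j=\gamma$, via $\beta\mapsto(\beta\circ j,\beta)$; the inverse just reads off $\beta$ from the pair. By $P$-symmetricity, the number of admissible $\gamma$ is $[Y',Y'']_i$ independently of $\alpha$, and for each fixed $\gamma$ the number of admissible $\beta$ is $[Y,Y']_j$ independently of $\gamma$, yielding the product formula.

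For (3), the key preliminary observation is an \emph{image-only} principle: for any type-preserving isometric embedding $f:Y'\to Y$, rewriting the condition $\beta\circ f=\alpha$ as $\beta|_{f(Y')}=\alpha\circ f^{-1}$ shows that $[Y,Y']_f$ coincides with $[Y,f(Y')]$ for the standard inclusion. Combined with (1) and (2) applied to the chain $\{y\}\hookrightarrow Y\xrightarrow{j}Y$, this gives that whenever a type-preserving self-isometry $j:Y\to Y$ with $j(y)=y'$ exists, one has $[Y,\{y\}]=[Y,Y]_j\cdot[Y,\{y\}]=[Y,\{y'\}]$, the last step by the image-only principle applied to the composite $\{y\}\to Y$ with image $\{y'\}$. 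The main obstacle is that such a self-isometry need not exist on $Y$ itself; to cover the general case, my plan is to enlarge $Y$ to a finite convex subcomplex $\tilde Y\subseteq\Sigma$ in which $y$ and $y'$ become swappable by a type-preserving self-isometry of $\tilde Y$ (exploiting the extended affine Weyl group's action on $\Sigma$), derive $[\tilde Y,\{y\}]=[\tilde Y,\{y'\}]$ as above, and then cancel the common factor $[\tilde Y,Y]$ via (2) to obtain $[Y,\{y\}]=[Y,\{y'\}]$.
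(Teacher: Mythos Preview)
Your arguments for (1) and (2) are correct and essentially what the paper calls ``straightforward.'' The image-only principle you state is also fine.

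For (3), however, there is a genuine gap in your plan. You want to enlarge $Y$ to a finite convex $\tilde Y\subseteq\Sigma$ admitting a type-preserving self-isometry $j$ with $j(y)=y'$, and you suggest producing $j$ from the extended affine Weyl group. Two problems: first, the \emph{type-preserving} automorphisms of $\Sigma$ form exactly the (ordinary) affine Weyl group $W$, not $\tilde W$; the extra elements of $\tilde W$ are only type-rotating. Second, and more seriously, even inside $W$ the obvious element carrying $y$ to $y'$ is a translation by $y'-y$, which has infinite order and admits no finite invariant convex set. To salvage your approach you would need a \emph{finite-order} $w\in W$ with $w(y)=y'$ (then $\tilde Y=\Conv\bigl(\bigcup_k w^k Y\bigr)$ works). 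Such a $w$ does exist---write $w=t_{y'-y}w_0$ with $w_0$ a Coxeter element of $W_0$, so $\text{Fix}(w_0)=0$ and $w$ has a fixed point---but this is a nontrivial fact you neither state nor prove, and your reference to $\tilde W$ does not get you there. Note also that an involution \emph{swapping} $y$ and $y'$ need not exist in $W$ (e.g.\ in $\tilde A_2$ when $y'-y$ is not proportional to a root), so ``swappable'' is too strong in general.

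The paper avoids this difficulty by a more local argument: it first treats the case where $y,y'$ have the same type and are at combinatorial distance $2$, in which their convex hull $E$ (a small rhombus of two alcoves) visibly carries a type-preserving reflection swapping them; since $E\subset Y$ one gets $[Y,\{y\}]=[Y,E][E,\{y\}]=[Y,E][E,\{y'\}]=[Y,\{y'\}]$. For arbitrary $y,y'$ it then chains together such distance-$2$ steps inside a larger $Z\supset Y$ and cancels $[Z,Y]$ exactly as you propose. The upshot is that your cancellation step is right, but the existence of the required self-isometry is where the content lies, and the paper supplies it via the distance-$2$ reduction rather than via a global finite-order element of $W$.
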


\begin{proof}
    The proof of the first two items is straightforward. For the third one, assume first that $y$ and $y'$ are at distance $2$ from each other, and let $E$ be the convex hull of $y$ and $y'$ in $\Sigma$. Note that there is an isometry $r$ of $E$ which swaps $y$ and $y'$. As such, 
    $$[E,\{y\}]=[E,E]_r[E,\{y\}]=[E,\{y'\}]$$
    by the first and second points of this proposition respectively. But now since $Y$ is convex, $E\subset Y$, so that by point 2 again, 
    $$[Y,\{y\}]=[Y,E][E,\{y\}]=[Y,E][E,\{y'\}]=[Y,\{y'\}].$$
    If $y$ and $y'$ are not assumed to be at distance $2$ anymore, up to taking $Y$ bigger, there is a sequence $y=y_0,y_1,\ldots,y_n=y'$ where all the $y_i$ have the same type and $y_i$ and $y_{i+1}$ are at distance $2$ from each other. Hence, in some (finite convex) $Z\supset Y$ we have 
    $$[Z,\{y\}]=[Z,\{y_1\}]=\cdots=[Z,\{y'\}].$$
    Finally, by (2), 
    $$[Z,Y][Y,\{y\}]=[Z,\{y\}]=[Z,\{y'\}]=[Z,Y][Y,\{y'\}],$$
    and finally $[Y,\{y\}]=[Y,\{y'\}]$ since $[Z,Y]\ne 0$.
\end{proof}

We may thus let $[Y]$ denote the common number $[Y,\{y\}]$ shared by all the vertices $y\in Y$ of type $0$. For $Y$ having $P$, let $m_Y$ be the counting measure on $\Isom(Y,X)$.\medskip

Fix an origin $y_0\in Y^0$ of type $0$ (the type of $o$). We let $\mu_Y^o$ be the uniform probability measure on the (finite) set $\Isom(Y,X)^o$ of embeddings $\alpha$ such that $\alpha(y_0)=o.$ More generally, if $i:Y'\to Y$ and $\alpha:Y'\to X$ are isometries, we denote by $\mu_{Y,Y'}^\alpha$ the uniform measure on the set
$$\Isom(Y,X)^\alpha:=\{\beta\in\Isom(Y,X)\:|\beta\circ i=\alpha\}.$$
Note that if $Y'$ is the vertex (of type $0$), we recover the previous definition.

We also let $\mu_Y$ be the measure $\sum_o\mu_Y^o$, where the sum ranges over all $o\in X^0$ with the same type as $y_0$. This definition clearly does not depend on the choice of the basepoint in $Y$ (of type $0$).\medskip

From an embedding $i:Y'\to Y,$ we can induce the restriction map $i^*:\Isom(Y,X)\to \Isom(Y',X)$ to be the precomposition by $i$.

\begin{pro}\:
    \begin{itemize}
        \item We have $[Y]\mu_Y=m_Y$.
        \item The restriction maps are measure preserving, i.e. for any non-empty connected simplicial complexes $Y,Y'$ having $P$, and any embedding $i:Y'\to Y$,
    $$(i^*)_*\mu_{Y}=\mu_{Y'}.$$
    \end{itemize}
\end{pro}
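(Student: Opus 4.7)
\medskip

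The plan is to prove item (1) first by unpacking the definition of $\mu_Y$, and then derive item (2) from it by a counting argument combined with the multiplicativity lemma stated just before the proposition.

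For item (1), I would start from the decomposition $\mu_Y = \sum_o \mu_Y^o$, where the sum ranges over vertices $o$ of $X$ of the same type as $y_0$. The fiber $\Isom(Y,X)^o$ is exactly the set of type-preserving isometric extensions of the embedding $\{y_0\} \to X$ sending $y_0$ to $o$. By the $P$-symmetricity of $X$ (Proposition \ref{pro: buildings are symmetric}) applied to the inclusion $\{y_0\} \hookrightarrow Y$, the cardinality of this set is $[Y,\{y_0\}] = [Y]$, independent of $o$. Therefore $\mu_Y^o = \frac{1}{[Y]} m_Y|_{\Isom(Y,X)^o}$, and since the sets $\Isom(Y,X)^o$ partition $\Isom(Y,X)$ as $o$ ranges over vertices of type $0$ in $X$, summing gives $\mu_Y = \frac{1}{[Y]} m_Y$, i.e. $[Y]\mu_Y = m_Y$.

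For item (2), I would first compute the pushforward of the counting measure. For any $\alpha \in \Isom(Y',X)$, the fiber $(i^*)^{-1}(\alpha)$ has cardinality $[Y,Y']_i$ by $P$-symmetricity, independent of $\alpha$. Hence $(i^*)_* m_Y = [Y,Y']_i \cdot m_{Y'}$. Applying item (1) on both sides, $(i^*)_* \mu_Y = \frac{[Y,Y']_i \cdot [Y']}{[Y]} \mu_{Y'}$, so it remains to check that this scalar equals $1$. Pick a vertex $y_0' \in Y'$ of type $0$; then $i(y_0')$ is a vertex of type $0$ in $Y$, so by the third part of the preceding lemma $[Y] = [Y,\{i(y_0')\}]$, and by the multiplicativity (second part of the same lemma) applied to $\{y_0'\} \hookrightarrow Y' \xrightarrow{i} Y$, this equals $[Y,Y']_i \cdot [Y',\{y_0'\}] = [Y,Y']_i \cdot [Y']$. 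The scalar is $1$, which finishes the proof.

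The only mild subtlety I expect is making sure that the basepoint conventions align: specifically, that $i$ sends a type-$0$ vertex of $Y'$ to a type-$0$ vertex of $Y$ (which is automatic since $i$ is type-preserving), and that both $[Y]$ and $[Y']$ are well-defined, i.e. $Y'$ contains a type-$0$ vertex — but this is built into the definition of ``having $P$''. No harder step is anticipated since the $P$-symmetricity of buildings, established in Proposition \ref{pro: buildings are symmetric}, has already done the combinatorial work.
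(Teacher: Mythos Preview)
Your argument is correct. The paper itself does not spell out a proof here but simply cites \cite[Lemma 1.4]{BFL23}, so there is nothing to compare against beyond noting that you have accurately reconstructed the standard counting argument that reference contains.
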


\begin{proof}
    This is Lemma 1.4 of \cite{BFL23}, the proof of both points carries over immediately.
\end{proof}

The next statement as well as its proof, is the same as that of \cite[Lemma 1.5]{BFL23}, it is a double counting argument.

\begin{lem}\label{lem: disintegration of prouniform in finite case}
    For $Y_0,Y_1,Y_2$ finite simplicial complexes having $P$, with isometries $i_k:Y_k\to Y_{k+1}$ and $\alpha_0\in\Isom(Y_0,X)$. Then 
    $$\mu_{Y_2,Y_0}^{\alpha_0}=\sum_{\alpha_1\in \Isom(Y_1,X)}\mu_{Y_1,Y_0}^{\alpha_0}(\alpha_1)\mu_{Y_2,Y_1}^{\alpha_1}.$$
\end{lem}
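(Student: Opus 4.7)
The plan is to verify the equality of measures by evaluating both sides on each individual $\beta \in \Isom(Y_2, X)$, using the multiplicativity $[Y_2,Y_0]_{i_1\circ i_0} = [Y_2,Y_1]_{i_1}\cdot[Y_1,Y_0]_{i_0}$ supplied by $P$-symmetricity of $X$, and the fact that each $\mu_{Y_k,Y_j}^{\alpha}$ is uniform on its (finite) support.

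First I would fix $\beta \in \Isom(Y_2,X)$ and split into two cases according to whether $\beta \in \Isom(Y_2,X)^{\alpha_0}$ or not. In the first case, the left-hand side evaluates at $\beta$ to $1/[Y_2,Y_0]_{i_1\circ i_0}$. For the right-hand side, observe that $\mu_{Y_2,Y_1}^{\alpha_1}(\beta)$ is nonzero (and equal to $1/[Y_2,Y_1]_{i_1}$) precisely when $\alpha_1 = \beta\circ i_1$; there is exactly one such $\alpha_1$. Since $\beta\circ i_1\circ i_0 = \alpha_0$, this unique $\alpha_1$ lies in $\Isom(Y_1,X)^{\alpha_0}$, so $\mu_{Y_1,Y_0}^{\alpha_0}(\alpha_1) = 1/[Y_1,Y_0]_{i_0}$. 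The right-hand side thus collapses to the product $1/([Y_2,Y_1]_{i_1}\,[Y_1,Y_0]_{i_0})$, which equals the left-hand side by the multiplicativity of the $[\cdot,\cdot]_\bullet$ invariant.

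In the second case, $\beta\notin\Isom(Y_2,X)^{\alpha_0}$, the left-hand side vanishes at $\beta$. For the right-hand side, a term is nonzero only if simultaneously $\alpha_1 = \beta\circ i_1$ (from the $\mu_{Y_2,Y_1}^{\alpha_1}$ factor) and $\alpha_1\circ i_0 = \alpha_0$ (from the $\mu_{Y_1,Y_0}^{\alpha_0}$ factor). Composing these forces $\beta\circ i_1\circ i_0 = \alpha_0$, contradicting the assumption; hence every term is zero and both sides agree.

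The only real input here is $P$-symmetricity, which guarantees that the cardinalities $|\Isom(Y_2,X)^{\alpha_1}| = [Y_2,Y_1]_{i_1}$ and $|\Isom(Y_1,X)^{\alpha_0}| = [Y_1,Y_0]_{i_0}$ are genuinely independent of the particular embeddings $\alpha_0,\alpha_1$, so that the uniform measures really do have the stated masses; the argument is then a routine double count, and I do not foresee any obstacle beyond being careful with the indexing.
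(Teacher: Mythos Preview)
Your argument is correct and is precisely the double counting argument the paper has in mind: the paper does not spell out a proof but points to \cite[Lemma 1.5]{BFL23} and calls it a double counting, which is exactly your pointwise verification using the multiplicativity $[Y_2,Y_0]_{i_1\circ i_0}=[Y_2,Y_1]_{i_1}[Y_1,Y_0]_{i_0}$ established just before.
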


Now that we have a family of measured spaces and measure preserving maps between them, we can take the limit of those, in order to also have measures on $\Isom(Y,X)$ for infinite $Y$.\medskip

\begin{dfn}
    We say that a simplicial complex $Y$ is $\mathrm{ind}$-$P$ if it can be written as an ascending union of finite convex subcomplexes having property $P$.
\end{dfn}

Note that if $Y$ is ind-$P$, then 
$$\Isom(Y,X)=\varprojlim\Isom(Y',X)$$
in the category of sets, where the limit ranges over all finite convex subcomplexes $Y'$ having $P$, and the transition maps between two subcomplexes being the restriction maps.

Since the restriction maps are measure preserving the projective limit taken above can actually be considered in the category of measured spaces, so we can define a measure $\mu_Y$ on $\Isom(Y,X)$ to be
$$(\Isom(Y,X),\mu_Y)=\varprojlim(\Isom(Y',X),\mu_{Y'}),$$
that is, the only measure such that for any finite convex subcomplex $Y'\subset Y$ having $P$, if $i:Y'\to Y$ designates the inclusion, then $(i^*)_*\mu_Y=\mu_{Y'}.$

\begin{dfn}
    The measure $\mu_Y$ is called the \emph{prouniform measure} on $\Isom(Y,X)$. 
\end{dfn}

The extension of the previous results from $P$ to ind-$P$ complexes works now in the exact same way as in \cite{BFL23}. We don't include the proofs of the following results, but we write them down in order to be able to reference them later on in the text.

More generally, for a pair of connected ind-$P$ simplicial complexes $W,Y$ and an isometry $i:W \to Y$, and $\alpha\in \Isom(W,X)$, we can construct a relative version $\mu_{Y,W}^\alpha$ as follows.
Let $W'$ be a connected  ind-$P$ simplicial complex, with isometries $i':W\to W'$, $j:W'\to Y$ such that $i=j\circ i'$. Assume that $i'(W)$ contains all but finitely many simplices of $W'$.

The following lemma is \cite[Lemma 1.8]{BFL23}.
\begin{lem}
The set $\Isom(W',X)^\alpha=\{\beta\in\Isom(W',X)\mid \alpha=\beta\circ i'\}$ is finite, and its size is independent of $\alpha$.
\end{lem}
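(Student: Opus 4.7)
The plan is to reduce this lemma to the finite case covered by Proposition \ref{pro: buildings are symmetric}. Since by hypothesis only finitely many simplices of $W'$ lie outside $i'(W)$, I would first choose a finite convex subcomplex $Z\subset W'$ having property $P$ that contains every simplex of $W'\setminus i'(W)$; by further enlarging $Z$ if necessary, I may also assume that $W_0:=Z\cap i'(W)$ is itself finite, convex, and has property $P$. This uses that $i'(W)$ is ind-$P$, so any finite piece of it sits inside a finite convex $P$-subcomplex, and intersections of convex subcomplexes inside $\Sigma$ remain convex.

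Next I would show that the restriction map $\beta\mapsto\beta|_Z$ gives a bijection
$$\Isom(W',X)^\alpha\;\longrightarrow\;\Isom(Z,X)^{\alpha_0},$$
where $\alpha_0$ denotes the precomposition of $\alpha$ with the identification $(i')\inv|_{W_0}$. Injectivity is immediate, since $W'=i'(W)\cup Z$ and any $\beta\in\Isom(W',X)^\alpha$ is prescribed to equal $\alpha\circ(i')\inv$ on $i'(W)$ and is then determined by its restriction to $Z$. For surjectivity, given $\gamma\in\Isom(Z,X)^{\alpha_0}$, I glue it with $\alpha\circ(i')\inv$ on $i'(W)$; the two pieces agree on the overlap $W_0=Z\cap i'(W)$ by the defining property of $\alpha_0$, so they define a type-preserving map $\beta:W'\to X$. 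Verifying that $\beta$ is an isometric embedding can be reduced to checking the isometry condition on pairs of simplices, each of which sits inside a finite convex subcomplex of $W'$ that is covered by $Z$ together with some finite convex $P$-piece of $i'(W)$, where isometry is already ensured.

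With the bijection in hand, the conclusion follows directly from $P$-symmetricity of $X$ (Proposition \ref{pro: buildings are symmetric}): the cardinality of $\Isom(Z,X)^{\alpha_0}$ equals the quantity $[Z,W_0]_{j}$ defined earlier, which is finite and depends only on the simplicial pair $(Z,W_0)$, hence is independent of $\alpha$.

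The main obstacle I anticipate is the bookkeeping around property $P$: making sure that both $Z$ and $W_0$ contain a vertex of type $0$ and that the enlargement step does not destroy the key containment $Z\supset W'\setminus i'(W)$. A secondary technical point is checking that the glued map $\beta$ is really an isometric embedding of all of $W'$ into $X$, rather than merely a simplicial map that is isometric on each side of the decomposition; this is where the convex nature of finite subcomplexes of ind-$P$ complexes gets used to transport the verification back into a single apartment at each step.
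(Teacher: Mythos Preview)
The paper does not give its own proof of this lemma; it simply records the statement and defers to \cite[Lemma 1.8]{BFL23}. So there is no in-paper argument to compare against, and your reduction to the finite case via $P$-symmetricity is exactly the natural strategy one would expect that reference to use.

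That said, there is a genuine soft spot in your surjectivity step. You propose to glue $\gamma:Z\to X$ with $\alpha\circ(i')^{-1}:i'(W)\to X$ along $W_0=Z\cap i'(W)$ and then assert that ``isometry is already ensured'' once any pair of points sits in a finite convex piece covered by $Z$ and a finite piece of $i'(W)$. But being covered by two isometric embeddings that agree on their overlap does \emph{not} by itself force the glued map to be an isometric embedding: already in an $\tilde A_1$ building one can take $Z=[0,1]$, $i'(W)=[1,\infty)$, $W_0=\{1\}$, and choose $\gamma$ so that $\gamma(0)$ equals the image of $2$ under $\alpha\circ(i')^{-1}$; the glued map then folds. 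Your verification, as written, is circular: passing to a finite convex $W'_n$ just reproduces the same gluing problem on a smaller scale.

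The fix is to ensure the overlap $W_0$ is \emph{thick enough} to rigidify the gluing. Concretely, enlarge $Z$ (and hence $W_0$) so that $W_0$ contains a chamber of $W'$; this is always possible since $i'(W)$ misses only finitely many simplices. Then both $\gamma(Z)$ and $\alpha\circ(i')^{-1}(i'(W_m))$ lie in apartments of $X$ (by \cite[Theorem 11.53]{AB08}) that share a common chamber, and the building axioms give a unique isomorphism between them fixing that chamber; this forces the glued map to be an isometric embedding on each finite convex $W'_n$. Alternatively, one can sidestep the gluing entirely by a counting argument: show that for large $n$ the restriction $\Isom(W'_{n+1},X)^{\alpha_{n+1}}\to\Isom(W'_n,X)^{\alpha_n}$ is injective with source and target of the same size $[W'_n,W'_n\cap i'(W)]$, using multiplicativity of the bracket. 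Either route closes the gap; the rest of your outline is sound.
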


Thus, for $W,W'$ as above, we can equip $\Isom(W',X)^\alpha$ with the uniform measure, which we denote $\mu_{W',X}^\alpha$.

More generally, define $\Isom(Y,X)^\alpha = \{\beta\in \Isom(Y,X)\mid \alpha=\beta\circ i\}$. Observe that again, in the category of sets, we have
$$ \Isom(Y,X)^\alpha= \varprojlim \Isom(W',X)^\alpha, $$
where $W'$ runs over all cofinite complexes containing $W$ as a subcomplex and being ind-$P$. 
We can thus introduce a measure $\mu_{Y,W}^\alpha $ on $\Isom(Y,X)^\alpha$ by setting
$$(\Isom(Y,X)^\alpha,\mu_{Y,W}^\alpha) = \varprojlim (\Isom(W',X),\mu_{Y,W'}^\alpha).$$

In particular, when $W$ is reduced to a point, we define this way the \emph{restricted} measure $\mu_Y^o$ for every $o\in X_0$. Note that again we get
$$\mu_Y=\sum_{o\in X_0} \mu_Y^o$$

\begin{dfn} 
The measures $\mu_Y^\alpha$ are called \emph{restricted prouniform measures}.
\end{dfn}

The following is \cite[Lemma 1.10]{BFL23}, it consists in taking the limit in Lemma \ref{lem: disintegration of prouniform in finite case}.
\begin{lem}\label{lem: desintegration des prouniformes}
    Let $Y_0,Y_1,Y_2$ be ind-$P$ simplicial complexes, with embeddings $i_k:Y_k\to Y_{k+1}$. Let $\alpha_0:Y_0\to X$ be an embedding. Then
    $$\mu_{Y_2,Y_0}^{\alpha_0} = \int_{\Isom(Y_1,X)} \mu_{Y_2,Y_1}^{\alpha_1} d\mu_{Y_1,Y_0}^{\alpha_0} (\alpha_1)$$
\end{lem}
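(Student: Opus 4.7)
The plan is to reduce the ind-$P$ statement to the finite case already handled in Lemma~\ref{lem: disintegration of prouniform in finite case}, via a cofinite approximation argument. Since each of the three prouniform measures appearing in the statement is defined as a projective limit over cofinite ind-$P$ subcomplexes, two measures on $\Isom(Y_2,X)^{\alpha_0}$ agree as soon as their pushforwards along the restriction maps $\pi_W\colon \Isom(Y_2,X)^{\alpha_0}\to \Isom(W,X)^{\alpha_0}$ coincide for every cofinite ind-$P$ subcomplex $W$ with $i_1(Y_1)\subset W\subset Y_2$ and $W\setminus i_1(Y_1)$ finite (such $W$ form a cofinal system in the inverse system defining $\mu_{Y_2,Y_0}^{\alpha_0}$). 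I therefore fix such a $W$.

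The pushforward of the left-hand side along $\pi_W$ is $\mu_{W,Y_0}^{\alpha_0}$ by the very definition of the inverse limit. For the right-hand side, I commute $\pi_W$ with the integral by Fubini to get
\[
(\pi_W)_*\int_{\Isom(Y_1,X)}\mu_{Y_2,Y_1}^{\alpha_1}\, d\mu_{Y_1,Y_0}^{\alpha_0}(\alpha_1)=\int_{\Isom(Y_1,X)}\mu_{W,Y_1}^{\alpha_1}\, d\mu_{Y_1,Y_0}^{\alpha_0}(\alpha_1).
\]
Setting $W_1:=i_1^{-1}(W)$, which is a cofinite ind-$P$ subcomplex of $Y_1$ containing $i_0(Y_0)$, the measure $\mu_{W,Y_1}^{\alpha_1}$ is uniform on the finite set $\Isom(W,X)^{\alpha_1}$ and manifestly depends on $\alpha_1$ only through $\alpha_1|_{W_1}$, so that $\mu_{W,Y_1}^{\alpha_1}=\mu_{W,W_1}^{\alpha_1|_{W_1}}$. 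Pushing the outer measure forward along $\pi_{W_1}\colon\Isom(Y_1,X)^{\alpha_0}\to\Isom(W_1,X)^{\alpha_0}$ and using $(\pi_{W_1})_*\mu_{Y_1,Y_0}^{\alpha_0}=\mu_{W_1,Y_0}^{\alpha_0}$, the pushforward of the right-hand side collapses to
\[
\int_{\Isom(W_1,X)^{\alpha_0}}\mu_{W,W_1}^{\alpha'_1}\, d\mu_{W_1,Y_0}^{\alpha_0}(\alpha'_1).
\]

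It thus remains to check $\mu_{W,Y_0}^{\alpha_0}=\int_{\Isom(W_1,X)^{\alpha_0}}\mu_{W,W_1}^{\alpha'_1}\, d\mu_{W_1,Y_0}^{\alpha_0}(\alpha'_1)$. All measures involved are now uniform probability measures on finite sets, because $W\setminus i_0(Y_0)$ is finite and the cardinalities of the sets of extensions are independent of the chosen base isometry by $P$-symmetry of $X$ (Proposition \ref{pro: buildings are symmetric}). The identity is then the exact double-counting statement of Lemma \ref{lem: disintegration of prouniform in finite case}, whose proof applies verbatim in this slightly more general setting where only the differences $W_1\setminus i_0(Y_0)$ and $W\setminus i_1(W_1)$ are assumed finite rather than the complexes themselves. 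I expect the most delicate step to be the Fubini commutation in the middle paragraph: one must check that $\alpha_1\mapsto \mu_{W,Y_1}^{\alpha_1}(A)$ is measurable for any Borel $A\subset \Isom(W,X)$, which is not entirely formal but follows from the observation that this integrand factors through the finite-set-valued restriction $\alpha_1\mapsto \alpha_1|_{W_1}$.
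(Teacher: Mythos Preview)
Your strategy matches the paper's: reduce to the finite double-counting statement (Lemma~\ref{lem: disintegration of prouniform in finite case}) by passing to the inverse limit. However, there is a slip in your choice of approximating complexes that prevents the reduction from actually landing in the finite case.

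You pick $W$ with $i_1(Y_1)\subset W$ and $W\setminus i_1(Y_1)$ finite, then set $W_1:=i_1^{-1}(W)$. Since $i_1(Y_1)\subset W$, this forces $W_1=Y_1$, so the ``reduction'' on the $Y_1$ level is vacuous. Consequently the sentence ``all measures involved are now uniform probability measures on finite sets, because $W\setminus i_0(Y_0)$ is finite'' is false in general: $W$ contains all of $i_1(Y_1)$, and $Y_1\setminus i_0(Y_0)$ need not be finite, so neither $\mu_{W,Y_0}^{\alpha_0}$ nor $\mu_{W_1,Y_0}^{\alpha_0}=\mu_{Y_1,Y_0}^{\alpha_0}$ is supported on a finite set, and Lemma~\ref{lem: disintegration of prouniform in finite case} does not apply. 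Relatedly, the complexes $W$ you consider are not members of the inverse system defining $\mu_{Y_2,Y_0}^{\alpha_0}$ at all (that system ranges over $W'$ with $W'\setminus (i_1\circ i_0)(Y_0)$ finite, not $W'\setminus i_1(Y_1)$ finite), so the claim that they form a cofinal family in that system is also incorrect.

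The repair is routine once noticed: run the approximation at all three levels simultaneously. Test both sides on cylinder sets depending on a finite convex subcomplex $Z_2\subset Y_2$ having $P$, set $Z_1:=i_1^{-1}(Z_2)$ and $Z_0:=i_0^{-1}(Z_1)$, and observe that after pushing forward to $\Isom(Z_2,X)$ each of the three measures becomes the corresponding finite restricted uniform measure relative to $\alpha_0|_{Z_0}$ (respectively $\alpha_1|_{Z_1}$); then Lemma~\ref{lem: disintegration of prouniform in finite case} applies directly to the triple $Z_0\subset Z_1\subset Z_2$.
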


As a direct consequence of the Lemma, we get
\begin{cor}\label{cor: i star star}
Let $Y_0,Y_1,Y_2$ be connected ind-$P$ simplicial complexes, with embeddings $i_k:Y_k\to Y_{k+1}$, and $\alpha_0:Y_0\to X$ be an embedding. 

Then $(i_1^*)_*\mu_{Y_2,Y_0}^{\alpha_0} = \mu_{Y_1,Y_0}^{\alpha_0}.$
\end{cor}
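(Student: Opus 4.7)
The plan is to derive Corollary~\ref{cor: i star star} directly from Lemma~\ref{lem: desintegration des prouniformes}: the corollary is exactly the statement obtained by ``forgetting'' the $Y_2$-coordinate in the disintegration, so the entire content is already packaged in that lemma.

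First, I would apply the disintegration to rewrite
$$\mu_{Y_2,Y_0}^{\alpha_0} = \int_{\Isom(Y_1,X)} \mu_{Y_2,Y_1}^{\alpha_1}\, d\mu_{Y_1,Y_0}^{\alpha_0}(\alpha_1).$$
Next, I would push both sides forward by $(i_1^*)_*$. Because pushforward is linear and commutes with integration against an outer parameter, this reduces the problem to computing $(i_1^*)_*\mu_{Y_2,Y_1}^{\alpha_1}$ for each individual $\alpha_1$ in the support of $\mu_{Y_1,Y_0}^{\alpha_0}$.

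The key observation is that, by the very definition of the restricted prouniform measure, $\mu_{Y_2,Y_1}^{\alpha_1}$ is a probability measure supported on $\Isom(Y_2,X)^{\alpha_1} = \{\beta \in \Isom(Y_2,X) : \beta\circ i_1 = \alpha_1\}$, which is exactly the fiber of $i_1^*$ above $\alpha_1$. Hence $(i_1^*)_*\mu_{Y_2,Y_1}^{\alpha_1} = \delta_{\alpha_1}$. Plugging this back into the disintegration, the integrand becomes $\delta_{\alpha_1}$ and the integral collapses to $\mu_{Y_1,Y_0}^{\alpha_0}$, as desired.

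I do not expect any real obstacle. The one point worth writing down carefully is that each $\mu_{Y_2,Y_1}^{\alpha_1}$ is actually a probability measure rather than an unnormalized counting measure: in the finite case this is immediate (uniform probability on a finite fiber, whose size is independent of $\alpha_1$ by $P$-symmetry of $X$), and the property is preserved by the projective limit used to define the prouniform measures on ind-$P$ complexes. Once that normalization is in place, the argument is purely formal manipulation of the disintegration formula.
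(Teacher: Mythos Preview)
Your proposal is correct and follows exactly the approach the paper indicates: the paper simply states that the corollary is ``a direct consequence of the Lemma'' (Lemma~\ref{lem: desintegration des prouniformes}), and your argument spells out precisely how that direct consequence goes, by pushing forward the disintegration and using that each $\mu_{Y_2,Y_1}^{\alpha_1}$ is a probability measure on the $i_1^*$-fiber over $\alpha_1$.
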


\begin{ex}
    Let $\Lambda$ be the fundamental sector of $\Sigma$. For $o\in V_Q$, we let $\mu_\Lambda^o$ be the restricted prouniform measure on $\Isom(\Lambda,X)^o$, with the restriction that $\phi(0)=o$. Then $\Isom(\Lambda,X)^o$ can be identified with $\Omega$. Since the harmonic measure $\nu_o$ satisfies all the characteristics of the prouniform measure, we have 
    $$\mu_\Lambda^o=\nu_o,$$
    linking this definition with our previously defined harmonic measures.
\end{ex}

We may also endow $\mathscr F=\Isom(\Sigma,X)$ with the prouniform measure $\mu_\mathscr F$.

Recall that by identifying $\mathscr F$ with $\Omega^{2,\op}\times \Sigma,$ we had defined a measure $\zeta_\mathscr F:=m\otimes\lambda$ on $\mathscr F.$

The proof of the following Lemma works the same as that of Lemma 3.23 of \cite{BFL23}.
\begin{lem}
    The measures $\mu_\mathscr F$ and $\zeta_\mathscr F$ are equal.
\end{lem}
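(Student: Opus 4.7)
The plan is to adapt the proof of \cite[Lemma 3.23]{BFL23} to the setting of regular Euclidean buildings, restricting throughout to type-preserving isometries. Both $\mu_\mathscr F$ and $\zeta_\mathscr F$ are $\Aut(X)$-invariant Borel $\sigma$-finite measures on $\mathscr F$: for $\mu_\mathscr F$ this is built into the projective limit construction as a limit of $\Aut(X)$-invariant uniform measures on $\Isom(K,X)$ for finite convex $K\subset\Sigma$; for $\zeta_\mathscr F=m\otimes\lambda$ we have already observed that $m$ is $\Aut(X)$-invariant, and the counting measure $\lambda$ on $\Sigma$ is trivially so since the $\Aut(X)$-action is only on the $\Omega^{2,\op}$ factor.

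I would then prove the equality by disintegrating both measures along the natural fiber map $\pi:\mathscr F\to V_Q$, $\phi\mapsto\phi(0)$. For $\mu_\mathscr F=\sum_{o\in V_Q}\mu_\mathscr F^o$ each $\mu_\mathscr F^o$ is a probability measure on $\pi^{-1}(o)=\Isom(\Sigma,X)^o$. A parallel computation using the identification $\mathscr F\simeq\Omega^{2,\op}\times\Sigma$ handles $\zeta_\mathscr F$. By $\Aut(X)$-invariance it then suffices to compare the conditional measures on $\pi^{-1}(o)$ for a single $o\in V_Q$. The fiber is in bijection with $\{(\omega,\omega')\in\Omega^{2,\op}:o\in\mathcal A(\omega,\omega')\}$, which by Lemma \ref{lem: opp ds link ->o ds apt} is the set of pairs whose projections to $\lk o$ are opposite in $\lk o$. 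The key algebraic simplification here is that choosing $x=o$ and $z=o$ in the defining formula $\beta_x(\omega,\omega')=h(x,z;\omega)+h(x,z;\omega')$ yields $\beta_o(\omega,\omega')=0$ on the fiber, so the density $\chi(-\beta_o)$ defining $m$ equals $1$ throughout $\pi^{-1}(o)$.

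The main obstacle is then to identify the prouniform conditional $\mu_\mathscr F^o=\mu_\Sigma^o$ with the corresponding restriction of $\zeta_\mathscr F$ after accounting for the $\Sigma$-factor normalization. I would apply the disintegration formula (Lemma \ref{lem: desintegration des prouniformes}) along the chain $\{0\}\subset\Lambda\subset\Lambda\cup w_0\Lambda\subset\Sigma$: by the restriction property (Corollary \ref{cor: i star star}), the marginal of $\mu_\Sigma^o$ along $\phi\mapsto\partial_\infty\phi(\Lambda)$ is $\mu_\Lambda^o=\nu_o$, and similarly for $w_0\Lambda$. The joint conditional is then matched to the $\zeta_\mathscr F$-fiber via a $P$-symmetric counting argument in $\lk o$, whose input is that the number of extensions of a given type-preserving embedding from $\Lambda$ to $\Lambda\cup w_0\Lambda$ depends only on combinatorial data of $\lk o$ and not on the specific embedding, as ensured by Proposition \ref{pro: buildings are symmetric}. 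This is the exact analogue of the corresponding counting in \cite[Lemma 3.23]{BFL23}, and is where the regularity of $X$ and the $P$-symmetry assumption enter essentially.
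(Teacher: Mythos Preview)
Your approach coincides with the paper's, which likewise simply defers to the argument of \cite[Lemma 3.23]{BFL23} adapted to the type-preserving setting; the fiberwise comparison over $o\in V_Q$ together with the observation $\beta_o\equiv 0$ on each fiber is the correct skeleton. One minor correction: the set $\Lambda\cup w_0\Lambda$ is not convex in rank $\geq 2$ (its convex hull is already all of $\Sigma$), so it is not ind-$P$ and cannot serve as an intermediate step --- use the chain $\{0\}\subset\Lambda\subset\Sigma$ directly, which is what the later Lemma \ref{lem: nuw est pushforward de nu} (case $w=w_0$) effectively records.
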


\begin{cor}\label{Weyl preserve la mesure}
    The measures $\mu_\mathscr F$ and $m$ are invariant under the action of the (spherical) Weyl group.
\end{cor}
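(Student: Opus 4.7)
The plan is to establish $W_0$-invariance of $\mu_{\mathscr F}$ directly from its projective-limit characterisation, and then to deduce invariance of $m$ from the product decomposition $\mu_{\mathscr F}=m\otimes\lambda$. Every $w\in W_0$ acts on $\Sigma$ as a type-preserving simplicial isometry fixing the origin (since $W_0\subset W$ and $W$ acts by type-preserving isometries, with $W_0$ being the stabilizer of the origin). The induced action on $\mathscr F=\Isom(\Sigma,X)$ I consider is by precomposition, $w\cdot\phi:=\phi\circ w^{-1}$, which preserves both the type-preserving and isometric conditions.

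For the invariance of $\mu_{\mathscr F}$, I will use that $\mu_{\mathscr F}$ is the unique measure whose pushforward to each $\Isom(Y,X)$ (for $Y$ a finite convex $P$-subcomplex of $\Sigma$) is $\mu_Y$. For any such $Y$ and any $w\in W_0$, the translate $wY$ is again a finite convex $P$-subcomplex ($w$ fixes the origin and preserves convexity). Pre-composition by $w^{-1}|_Y$ gives a bijection $\Isom(wY,X)\to\Isom(Y,X)$ between finite sets of equal cardinality (the typed complexes $Y$ and $wY$ are isomorphic via $w$), which sends counting measure to counting measure, hence $\mu_{wY}$ to $\mu_Y$. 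Together with the commutativity between restriction-to-$Y$ and the $W_0$-action on $\mathscr F$, this shows that the pushforward of $w_*\mu_{\mathscr F}$ to each $\Isom(Y,X)$ equals $\mu_Y$, so by the projective-limit universal property defining $\mu_{\mathscr F}$, one has $w_*\mu_{\mathscr F}=\mu_{\mathscr F}$.

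To deduce invariance of $m$, I will analyse the $W_0$-action on $\mathscr F\simeq\Omega^{2,\op}\times\Sigma$. Under this identification the precomposition action of $W_0$ should split as a product of actions: on the $\Sigma$ factor $W_0$ acts by the natural linear action and thus preserves the counting measure $\lambda$, and on the $\Omega^{2,\op}$ factor it acts by permuting chambers at infinity within each spanned apartment. The identity $w_*(m\otimes\lambda)=m\otimes\lambda$, combined with the invariance of $\lambda$ and its $\sigma$-finiteness, then forces $w_*m=m$ for every $w\in W_0$. The delicate point I anticipate is precisely this last step: verifying that the $W_0$-action on $\mathscr F$ really does respect the product structure of $\Omega^{2,\op}\times\Sigma$ in the sense required; once this is established, the conclusion follows from a standard disintegration/Fubini argument.
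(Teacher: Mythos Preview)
Your argument for $\mu_{\mathscr F}$ is correct and is essentially the paper's one-line observation (``$\Sigma$ is $W$-invariant and $W$ acts simplicially''), spelled out in more detail via the projective-limit description.

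For $m$, your approach works but is more elaborate than needed, and the ``delicate point'' you flag can be avoided entirely. The paper does \emph{not} claim that the $W_0$-action on $\mathscr F\simeq\Omega^{2,\op}\times\Sigma$ splits as a product; it only observes that $W_0$ stabilises the single slice $\Omega^{2,\op}\times\{0\}$. This is immediate because $W_0$ fixes $0\in\Sigma$. Since $\lambda$ is counting measure, the restriction of $\mu_{\mathscr F}=m\otimes\lambda$ to that slice is exactly $m$, and invariance of $\mu_{\mathscr F}$ together with invariance of the slice gives invariance of $m$ directly---no disintegration or Fubini step required. By contrast, your claim that the action is a genuine product action on $\Omega^{2,\op}\times\Sigma$ is stronger and, depending on how the identification is set up (i.e.\ on the choice of section $\Omega^{2,\op}\to\mathscr F$), may fail: the $\Sigma$-coordinate can pick up a translation term depending on the $\Omega^{2,\op}$-coordinate. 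Your Fubini argument can be rescued under the weaker hypothesis that the action is merely fiber-preserving and preserves counting measure on fibers, but the paper's slice argument sidesteps this verification altogether.
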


\begin{proof}
    Since $\Sigma$ is $W$-invariant and $W$ is acting simplicially, we directly have $w\mu_\mathscr F=\mu_\mathscr F$.

    Now under the identification $\mathscr F\simeq \Omega^{2,\op}\times \Sigma$, $W$ stabilizes $\Omega^{2,\op}\times\{0\},$ and the restriction of $\mu_\mathscr F$ is precisely $m$, which is thus also $W$-invariant.
\end{proof}

\section{Random walks}\label{sec: Random walks}
\subsection{Graphs and Markov chains}\label{ssec:defs random walks}
We start by recalling basic facts on random walks, following for the most parts the book \cite{Woe00}.

A graph $X=(V,E)$ will be the data of a vertex set $V$ and a symmetric $E\subset V^2$ (note that we allow edges from $x$ to itself). We will often forget $V$, and simply refer to vertices as elements of the graph. We will often write $x\sim y$ instead of $(x,y)\in E$. Note that $\sim$ is symmetric, but need not be reflexive nor transitive.

A graph is naturally a metric space, by letting 
$$d(x,y)=\inf\{n\in\m N,\exists z_0,z_1,\ldots,z_n\in X,\:x=z_0\sim z_1\sim\cdots\sim z_n=y\}$$
be the shortest path distance. We say a graph is \emph{connected} if $d$ is always finite.

The degree of $x$ is the number $\deg(x)$ of $y\in X$ such that $x\sim y$, and $\lk(x)$ is the set of such $y$. We say $X$ is \emph{locally finite} if $\deg(x)<\infty$ for all $x\in X$.\\

A \emph{Markov chain} on a countable discrete set $X$ is the data of a \emph{transition matrix} 
$$P=(p(x,y))_{x,y\in X}$$
such that $p(x,y)\ge 0$, and
$$\sum_{y\in X}p(x,y)=1$$
for all $x\in X$. One can think of $P$ as encoding the probabilities an ant might go from a vertex to another (in one unit of time), so that the probability of going to $y$ when at $x$ is $p(x,y)$. We then get a sequence of random variables $(Z_n)_{n\in\m N}$ on $X$, representing the position we are at at time $n$ when starting at a given point. The probability space on which the $Z_n$ live is the trajectory space $X^{\m N}$, so that $Z_n$ is simply the $n$-th coordinate. If we start the walk at $x\in X$, the probability on $X^\m N$ is the only one verifying 
$$\m P_x(Z_0=x_0,Z_1=x_1,\ldots,Z_k=x_k)=\m 1_x(x_0)p(x_0,x_1)\cdots p(x_{k-1},x_k)$$
for all $k\in\m N$ and $x_i\in X$. The associated expectation will be denoted $\m E_x$.

Let $p^{(n)}(x,y):=\m P_x(Z_n=y)$ be the probability that we are at $y$ at time $n$ when we start at $x$. We say $P$ is \emph{irreducible} if for any $x,y\in X$, there is some $n$ for which $p^{(n)}(x,y)>0$. In other words, we can go from $x$ to $y$ with positive probability. We also say $P$ is \emph{symmetric} if $p(x,y)=p(y,x)$. In that case, we also have $p^{(n)}(x,y)=p^{(n)}(y,x)$.

Finally, we say that $P$ is \emph{recurrent} if 
$$\m P_x(\exists n>0, Z_n=x)=1,$$
that is we come back to where we started with probability $1$. Although the definition may depend on $x$, it does not when the walk is irreducible, which we assume from now on. We say that $P$ is \emph{transient} if it is not recurrent.

Define $\Aut(X,P)$ as the group of bijections of $X$ which preserve $P$, that is, the group of $f\in\rm{Sym}(X)$ such that $P(f(x),f(y))=P(x,y)$ for all $x,y\in X$.

The transition kernel $P$ can also be seen as a continuous operator, called an \emph{averaging operator} on $\m R^X$, by letting
$$Pf(x)=\m E_x(f(Z_1))=\sum_{y\in X}p(x,y)f(y).$$
(actually on the subspace of functions $f:X\to\m R$ such that $P|f|<\infty$).\\
A function $f$ is said to be $P$-\emph{harmonic} if $f=Pf$. In that case, $(f(Z_n))_n$ is a martingale with respect to the filtration generated by the $(Z_n)$.\\

Dually, say a measure $\nu$ on $X$ is $P$\emph{-stationary} if 
$$\sum \nu(x)p(x,y)=\nu(y),$$
which is often written as $\nu P=\nu$.\\

When given a locally finite graph, we can form a Markov chain on it by saying that we are equally likely to take any from $x$, i.e.
$$p(x,y)=\frac1{\deg(x)} \text{ if } x\sim y,\text{ and $0$ otherwise.}$$
This Markov chain is the one most commonly used, and is called the \emph{simple random walk} on $X$. It is irreducible precisely when the graph is connected.

Conversely, if $(X,P)$ is a Markov chain, if we have the added assumption that $p(x,y)>0\iff p(y,x)>0$, we can define a graph structure on $X$ by saying that $x\sim y$ exactly when $p(x,y)>0$.\medskip

Random walks also appear naturally in the context of group theory. Let $G$ be a locally compact group, and let $\mu$ be a probability measure on $X$, which we assume to be absolutely continuous with respect to the Haar measure on $G$. Assume $\mu$ is admissible (the subgroup generated by $\mathrm{supp}(\mu)$ is $G$). We then let $p(g,h)=\mu(g\inv h)$. The above hypotheses ensure that our walk is irreducible, and it will be symmetric whenever $\mu$ is.

\subsection{Poisson-Furstenberg boundaries}
The Poisson-Furstenberg boundary is a measured space associated to a random walk. The idea is that this space encompasses all the trajectories at infinity of the random walk, along with the hitting probability. The reader is referred to \cite{Kai96} or \cite{Lal23} for more details.\medskip

Let $(X,P)$ be an irreducible random walk on a discrete space, and fix an origin $o\in X$. Recall that the path space $(X^\m N,\m P_o)$ is the space $X^\m N$ of sequences of points in $X$, and $\m P_o$ was defined in the last section.

Say two sequences $(x_n)$ and $(x_n')$ in the path space are $\sim$-equivalent if there are integers $m,m'$ such that $T^m(x_n)=T^{m'}(x_n'),$ where $T((x_n))=(x_{n+1})$ is the shift. In other words, $(x_n)$ and $(x_{n+1})$ have the same tail.

Let $\mc A$ be the $\sigma$-algebra of all $\sim$-closed measurable subsets of $X^\m N$. The Poisson-Furstenberg boundary of $(X,P)$ is the unique standard probability space $B_{PF}(X,P)$ (up to isomorphism) such that there is a measurable, surjective map 
$$\pi:X^\m N\twoheadrightarrow B_{PF}(X,P)$$ 
such that $\mc A$ is the pullback of the measurable subsets of $B_{PF}(X,P)$ by $\pi$. We let $\nu_x=\pi_*\m P_x$, and since $P$ is irreducible, the measures $\nu_x$ are all absolutely continuous with respect to one another. They are moreover stationnary, in the sense that 
$$\sum_{y\in X}p(x,y)\nu_y=\nu_x.$$

In the case where $X$ is a countable group $\Gamma$ and $P$ is defined in the same way as described in \ref{ssec:defs random walks}, the tail equivalence $\sim$ is clearly $\Gamma$-equivariant, so that the (diagonal) action of $\Gamma$ factors to one on $B_{PF}(\Gamma,\mu)$. The theory is very developed in that specific context, and the Poisson boundary often gives actions of $\Gamma$ with nice properties. For instance, the action is ergodic when acting diagonally on $B_{PF}(\Gamma,\mu)^2$, and also amenable. This means that Poisson-Furstenberg boundaries are \emph{boundaries} of $\Gamma$ in the sense of Bader-Furman (see Example \ref{ex: BPF is strong boundary}). The reader is referred to \cite{Kai00} for more details as well as examples on the Poisson-Furstenberg boundary of a group.\medskip

A big interest of Poisson-Furstenberg boundaries is its relationship with harmonic functions, via the \emph{Poisson formula}.

For a bounded harmonic function $h$ on $X$, $(h(Z_n))_{n\in\m N}$ is a bounded martingale, so that Doob's martingale convergence Theorem tells us that for $\m P_o$-almost every path $(x_n)_{n\in\m N}$, $(h(x_n))_n$ converges to some $f(\bar x)$. This limit is clearly $\sim$-invariant, so that $f$ descends to a map in $L^\infty(B_{PF}(X,\mu))$, which we keep calling $f$. We may recover $h$ from $f$ thanks to the Poisson integral formula, which gives
$$h(x)=\int_{B_{PF}(X,P)}f(\omega)\d \nu_x(\omega).$$
This formula can be applied to any $f\in L^\infty(B_{PF}(X,\mu))$, and the obtained map $h_f$ is a bounded harmonic function on $X$. The two maps we just defined are inverses of each other, and actually are ($\Aut(X,P)$-equivariant) isomorphisms between the Banach spaces $H^\infty(X,P)$ and $L^\infty(B_{PF}(X,\mu))$.

In particular, in order to prove that two Poisson boundaries are isomorphic, it suffices to prove that their spaces of bounded harmonic functions are.

\subsection{The induced walk on a recurrent subset}\label{ssec: Induced walk}
In this part we describe how to transform a random walk into another walk on a big enough subset of $X$. This method is heavily inspired by the work of Lyons and Sullivan (see \cite[Section 7]{LS84}), and the results here should in fact be thought of as a discrete and thus simplified analogue of (a part of) their work.

\begin{dfn}\label{dfn: recurrent subset}
    Let $X$ be a countable discrete space, and let $(Z_n)$ be a random walk on $X$ with transition matrix $P$. We say that a subset $Y\subset X$ is \emph{recurrent} if, for any $x\in X$,
    $$\m P_x(\exists n,Z_n\in Y)=1.$$
    We say that $Y$ is transient if it is not recurrent.

    For a recurrent $Y\subset X$, define inductively a family of random variables $\tau_k$ by $\tau_{-1}=-1,$ and 
    $$\tau_{k+1}=\inf\{j>\tau_k,Z_j\in Y\}.$$
\end{dfn}

Since $Y$ is recurrent, the $\tau_k$ are a.s. finite stopping times for the filtration generated by the $(Z_n)$.

This section is devoted to the proof of the following:

\begin{theo}\label{theo: inducedRW}
    Let $(X,P)$ be a random walk, with associated random variables $(Z_n)_n$, and let $Y\subset X$ be recurrent. Define the random walk $Q$ on $Y$ by
    $$q(x,y)=\m P_x(Z_{\tau_1}=y),$$
    or equivalently by the $Y$-valued random variables $(S_k)_{k\in\m N}:=(Z_{\tau_k})_{k\in\m N}$.
    
    Then there is an isomorphism of Banach spaces
    \begin{eqnarray*}
        H^\infty(X,P)&\simeq& H^\infty(Y,Q)\\
        h&\mapsto& h|_Y\\
        \m E_x(h(S_0))&\mapsfrom&h
    \end{eqnarray*}
    between bounded harmonic functions on $X$ and bounded harmonic functions on $Y$. Moreover, 
    \begin{enumerate}
        \item If $P$ is symmetric, then so is $Q$.
        \item If $\Gamma<\Aut(X,P)$ is such that $Y$ is $\Gamma$-invariant, then so is $Q$, and the above isomorphism is $\Gamma$-equivariant.
    \end{enumerate}
\end{theo}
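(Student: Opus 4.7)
The argument rests on the optional stopping theorem and the strong Markov property. Since $Y$ is recurrent, the $\tau_k$ are a.s.\ finite stopping times, so the chain $(S_k)$ is well-defined and $q(x,y) = \m P_x(Z_{\tau_1} = y)$ is a genuine stochastic kernel on $Y$, giving a bona fide Markov chain.

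For the harmonic-function isomorphism I verify both directions. If $h \in H^\infty(X,P)$, then $(h(Z_n))_n$ is a bounded martingale in the natural filtration of $(Z_n)$; optional stopping at the a.s.-finite time $\tau_1$ yields $(Q(h|_Y))(x) = \m E_x(h(Z_{\tau_1})) = h(x)$ for $x \in Y$, so $h|_Y \in H^\infty(Y,Q)$ with the same sup-norm bound. Conversely, given $\tilde h \in H^\infty(Y,Q)$, set $h(x) := \m E_x(\tilde h(Z_{\tau_0}))$, well-defined and bounded by $\|\tilde h\|_\infty$. The strong Markov property applied at time $1$ then gives $Ph = h$ in both regimes: when $x \notin Y$ one has $\tau_0 \ge 1$, so conditioning on $Z_1 = y$ reduces $\m E_x(\tilde h(Z_{\tau_0}))$ to $\sum_y p(x,y) \m E_y(\tilde h(Z_{\tau_0})) = Ph(x)$; when $x \in Y$ one has $\tau_0 = 0$, so $h(x) = \tilde h(x)$, and the same conditioning identifies $Ph(x)$ with $\m E_x(\tilde h(Z_{\tau_1}))$, which equals $\tilde h(x)$ by $Q$-harmonicity. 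The two constructions are mutually inverse: restrict-then-extend coincides with the identity by one further application of optional stopping, this time at $\tau_0$; extend-then-restrict is the identity because $\tau_0 = 0$ on $Y$.

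The two remaining items are essentially path-level statements. For symmetry, assuming $P$ symmetric, I decompose
$$q(x,y) = \sum_{k \ge 1} \sum_{x_1, \dots, x_{k-1} \in X \setminus Y} p(x,x_1)\, p(x_1,x_2) \cdots p(x_{k-1}, y),$$
and the path-reversal bijection $(x_1, \dots, x_{k-1}) \mapsto (x_{k-1}, \dots, x_1)$ combined with termwise symmetry of $P$ transforms this into the analogous sum for $q(y,x)$. For $\Gamma$-equivariance, if $g \in \Gamma < \Aut(X,P)$ preserves $Y$, then $g$ pushes $\m P_x$ to $\m P_{gx}$ and leaves the hitting times $\tau_k$ invariant, whence $q(gx,gy) = q(x,y)$; equivariance of the restriction map $h \mapsto h|_Y$ (and of its inverse) is then immediate from the definitions.

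The main subtlety lies in the extension direction, specifically in decomposing the step at time $1$ so that the two regimes $x \in Y$ and $x \notin Y$ are treated uniformly by the strong Markov property; but thanks to boundedness of $\tilde h$ and the a.s.\ finiteness of $\tau_0$ and $\tau_1$, no estimate beyond the Markov property and Doob's martingale convergence theorem is needed.
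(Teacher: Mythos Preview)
Your proof is correct and follows essentially the same route as the paper: optional stopping for the restriction direction, a case split on $x\in Y$ versus $x\notin Y$ (equivalently, $\tau_0=0$ versus $\tau_0\ge 1$) together with the Markov property for the extension direction, and optional stopping at $\tau_0$ to close the loop. You also spell out the symmetry (path reversal) and $\Gamma$-equivariance arguments that the paper leaves to the reader; one minor slip is the final reference to ``Doob's martingale convergence theorem,'' where you mean the optional stopping theorem already invoked above.
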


\begin{proof}
    Take a bounded harmonic function $h$ on $X$, we aim to prove that its restriction $h|_Y$ to $Y$ is again harmonic.
    
    By harmonicity of $h$, the sequence $h(Z_n)$ is a martingale with respect to the filtration generated by the $Z_i$. Since $h$ is bounded, the $h(Z_n)$ are uniformly integrable, so by the optional stopping Theorem (\cite[Theorem 12.28]{LG22}),
    $$\m E_x(h(S_1))=\m E_x(h(Z_{\tau_1}))=\m E_x(h(Z_0))=h(x)$$
    for any $x\in Y$, which is precisely saying that $h|_Y$ is harmonic for the induced random walk on $Y$.\medskip
    
    Conversely, let $h$ be a harmonic function on $Y$ and let
    $$\tilde h(x)=\m E_x(h(S_0))=\sum_{y\in Y}\alpha(x,y)h(y),$$
    where $\alpha(x,y)=\m P_x(S_0=y)$

    Note that for $x\in Y$, $\alpha(x,y)=\delta_{x,y},$ so that $h$ and $\tilde h$ coincide on $Y$.
    
    Since $\suml_{y\in Y}\alpha(x,y)=1,$ $\tilde h$ is bounded by the same constant as $h$, and it only remains to prove harmonicity of $\tilde h$.
    
    We have 
    \begin{eqnarray*}
        \sum_{y\in X}p(x,y)\tilde h(y)&=&\sum_{y\in X}p(x,y)\sum_{z\in Y}\alpha(y,z)h(z)\\
        &=&\sum_{z\in Y}h(z)\sum_{y\in X}p(x,y)\alpha(y,z)
    \end{eqnarray*}
    Now letting $\sigma=\inf\{j\ge 1,Z_j\in Y\},$ we have 
    \begin{eqnarray*}
        \m P_x(Z_\sigma=z)&=&\sum_{y\in Y}\m P_x(Z_1=y)\m P_x(Z_\sigma=z|Z_1=y)\\
        &=&\sum_{y\in Y}p(x,y)\m P_y(Z_{\tau_0}=z)\\
        &=&\suml_{y\in X}p(x,y)\alpha(y,z).
    \end{eqnarray*}
    
    Note that $\sigma=\tau_0\m 1_{\{Z_0\not\in Y\}}+\tau_1\m 1_{\{Z_0\in Y\}}.$ Hence if $x\not\in Y$, 
    $$\sum_{y\in X}p(x,y)\alpha(y,z)=\m P_x(S_0=z)=\alpha(x,z),$$
    so that 
    $$\sum_{y\in X}p(x,y)\tilde h(y)=\sum_{z\in Y}h(z)\sum_{y\in X}p(x,y)\alpha(y,z)=\sum_{z\in Y}\alpha(x,z)h(z)=\tilde h(x).$$
    Now if $x\in Y,$ we get 
    $$\sum_{y\in X}p(x,y)\alpha(y,z)=\m P_x(S_1=z)=q(x,z)$$
    instead, and
    $$\sum_{y\in X}p(x,y)\tilde h(y)=\sum_{z\in Y}h(z)\sum_{y\in X}p(x,y)\alpha(y,z)=\sum_{z\in Y}q(x,z)h(z)\underset{(*)}=h(x)=\tilde h(x),$$
    where $(*)$ is the $Q$-harmonicity of $h$.
    
    In the end, $\tilde h$ is harmonic on $X$, and we already proved that $(\tilde h)|_Y=h.$ Finally, if $h$ is bounded harmonic on $X$, then 
    $$\widetilde{h|_Y}(x)=\m E_x(h|_Y(S_0))=\m E_x(h(S_0))=\m E_x(h(Z_{\tau_0}))=\m E_x(h(Z_0))=h(x)$$
    again by Doob's stopping time Theorem, and we are done.

    The moreover part is straightforward and left to the reader.
\end{proof}

\subsection{A recurrence criterion}\label{ssec: networks}
Given Theorem \ref{theo: inducedRW}, it makes sense to seek out a criterion for recurrence of certain subsets.

In order to state our results in a greater level of generality, we give the definition of a \emph{lattice} in a general graph-theoretic setting (see \cite{BL01}).

\begin{dfn}\label{dfn: réseau dans un graphe}
    Let $X$ be a connected, locally finite graph. A lattice in $X$ is a subgroup $\Gamma<\Aut(X)$ such that all stabilizers are finite, and for some fundamental domain $D$ for the action of $\Gamma$ on $X$,
    $$\sum_{x\in D}\frac1{|\Gamma_x|}<\infty.$$
\end{dfn}
When $\Aut(X)$ is acting cocompactly on $X$, the group $\Gamma$ is a lattice in $X$ exactly when it is a lattice in the locally compact group $\Aut(X)$, hence the name.

Our strategy for proving that a certain walk is recurrent will be to exhibit a finite invariant measure. Although this could be done without the definition of \emph{networks}, they provide the natural intuition as to why this works.

\begin{dfn}
     A \emph{network} is the data of a countably infinite set $X$, along with a symmetric map $a:X\times X\to \m R_{\ge 0}$ such that for any $x\in X$, $m(x):=\sum_ya(x,y)$ is positive and finite.

    The map $a$ is called the \emph{conductance}, and $m$ is the \emph{total conductance}. Letting $p(x,y)=\frac{a(x,y)}{m(x)}$ defines a Markov chain for which $m$ is stationary. Conversely, a Markov chain is called \emph{reversible} if it can be obtained in this way. 

    We will say that a network is \emph{recurrent}, \emph{transient} or \emph{irreducible} if the associated Markov chain is.
\end{dfn}

For instance, the simple random walk on a locally finite graph is always reversible, with $a(x,y)=1$ if $x\sim y$ and $0$ otherwise.\medskip

We are now able to state the result of this section.

\begin{pro}\label{pro: (positive) recurrence criterion}
    Let $\mc N=(X,a)$ be a network, $P$ the associated random walk, and let $\Gamma<\Aut(X,P)$ be such that $|\Gamma_x|$ is finite for all $x$, and such that for some (equivalently, any) fundamental domain $D$ of $\Gamma\backslash X$,
    $$\sum_{x\in D}\frac{m(x)}{|\Gamma_x|}<\infty.$$ 
    Then for any $o\in X$, the random walk on $X$ meets $\Gamma\cdot o$ infinitely often almost surely.
\end{pro}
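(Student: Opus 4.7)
The plan is to descend the walk to the quotient $\Gamma\backslash X$, where the hypothesis produces a finite invariant measure, and then invoke positive recurrence of irreducible reversible chains with finite stationary mass.

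First I would define the quotient Markov chain $\bar P$ on $\Gamma\backslash X$ by $\bar p([x],[y]) := \sum_{y' \in \Gamma\cdot y} p(x,y')$, which is well-defined since $\Gamma < \Aut(X,P)$. The total conductance $m$ is $\Gamma$-invariant (this is implicit in the hypothesis being independent of $D$, and follows from $\Gamma$-invariance of the conductance $a$), so the formula $\bar m([x]) := m(x)/|\Gamma_x|$ defines a measure on $\Gamma\backslash X$ whose total mass is finite exactly by assumption. A short double-counting argument using the symmetry $a(x,y)=a(y,x)$ and the orbit-stabilizer theorem should then show that $\bar m$ is stationary for $\bar P$, essentially via
$$\sum_{x \in D}\frac{m(x)}{|\Gamma_x|}\sum_{y' \in \Gamma\cdot y}p(x,y') = \sum_{x\in D}\frac{1}{|\Gamma_x|}\sum_{y'\in\Gamma\cdot y}a(x,y') = \frac{m(y)}{|\Gamma_y|} = \bar m([y]),$$
where the middle identity comes from $\sum_{z\in X} a(z,y) = m(y)$ after reindexing over $\Gamma$-orbits.

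Next I would restrict $\bar P$ to the irreducible component containing $[o]$. The restriction is an irreducible reversible Markov chain on a countable state space with a finite stationary measure, hence positive recurrent. In particular, started at $[o]$ it hits $[o]$ infinitely often almost surely. Lifting back through the quotient map $\pi: X \to \Gamma\backslash X$, which intertwines $P$ and $\bar P$, the event $\{Z_n \in \Gamma\cdot o\}$ coincides with $\{\pi(Z_n) = [o]\}$, yielding the conclusion.

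The main obstacle I expect is the careful verification of stationarity for $\bar m$: the computation amounts to an exchange of summations entangled with orbit-stabilizer weights, which is delicate but not deep. One also has to pin down the implicit $\Gamma$-invariance of $m$ required for the hypothesis to be well-posed in the first place.
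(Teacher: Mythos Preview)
Your approach is correct and essentially the same as the paper's: descend to the quotient chain, observe that $\bar m([x])=m(x)/|\Gamma_x|$ is a finite stationary measure, and invoke positive recurrence. The only cosmetic difference is that the paper packages your stationarity computation into a preceding Proposition--Definition by defining a quotient \emph{network} with conductance $a'([x],[y])=\frac{1}{|\Gamma_x|}\sum_{y'\in\pi^{-1}([y])}a(x',y')$, so that $\bar m$ is automatically the total conductance (hence stationary) and the proof of the proposition itself becomes a one-line citation of the finite-stationary-measure $\Rightarrow$ positive-recurrence theorem; your direct verification of $\bar m\bar P=\bar m$ achieves the same thing.
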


As a direct corollary, we get:

\begin{cor}\label{cor: lattice -> recurrence of orbits}
    Let $X$ be a connected, locally finite graph with bounded degree, and let ${\Gamma<\Aut(X)}$ be a lattice in $X$. Then for any base point $o\in X$, the simple random walk on $X$ (or any {$\Gamma$-invariant} network on $X$ with bounded total conductance) meets $\Gamma\cdot o$ infinitely often almost surely.
\end{cor}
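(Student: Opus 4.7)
The plan is to check that the hypotheses of Proposition \ref{pro: (positive) recurrence criterion} are all satisfied in this setting, then apply it directly. The main content has already been done in that proposition, so what remains is essentially bookkeeping, and I do not expect any genuine obstacle.

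First I would verify that $\Gamma$ sits inside $\Aut(X,P)$. For the simple random walk, $p(x,y) = \deg(x)^{-1}\m 1_{x\sim y}$ is determined by the graph structure, so any graph automorphism in $\Aut(X)$ preserves both $\deg$ and the adjacency relation, hence $P$; thus $\Gamma < \Aut(X,P)$. For a general $\Gamma$-invariant network with conductance $a$, the assumption that $\Gamma$ preserves $a$ gives the same conclusion directly. Moreover, stabilizers $\Gamma_x$ are finite by the very definition of a lattice in $X$.

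It then remains to control $\sum_{x\in D} m(x)/|\Gamma_x|$ on a fundamental domain $D$. For the simple random walk $m(x) = \deg(x)$, and this is bounded above by some $\Delta<\infty$ since the graph has bounded degree; for a $\Gamma$-invariant network with bounded total conductance, $m(x)\le\Delta$ by hypothesis as well. Consequently
\[
\sum_{x\in D}\frac{m(x)}{|\Gamma_x|}\;\le\;\Delta\sum_{x\in D}\frac{1}{|\Gamma_x|}\;<\;\infty,
\]
the last inequality being the lattice condition from Definition \ref{dfn: réseau dans un graphe}. All the assumptions of Proposition \ref{pro: (positive) recurrence criterion} are now satisfied, so for any $o\in X$ the random walk meets $\Gamma\cdot o$ infinitely often almost surely, which is exactly the statement of the corollary.
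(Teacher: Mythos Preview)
Your proposal is correct and matches the paper's approach exactly: the paper states this result as a direct corollary of Proposition~\ref{pro: (positive) recurrence criterion} with no explicit proof, and your argument is precisely the routine verification of hypotheses that the word ``direct'' is hiding.
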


In order to prove the theorem, we will prove recurrence of the \emph{quotient network}.
\begin{prodfn}\label{dfn: réseau quotient}
    Let $\mc N=(X,a)$ be an irreducible network, and let $\Gamma<\Aut(\mc N)$ (i.e. $a(\gamma x,\gamma y)=a(x,y)$ for all $x,y\in X$ and $\gamma\in\Gamma$) be a closed subgroup.
    
    Then the data of a conductance $a'$ on $\Gamma\backslash X$ defined by
    $$a'(x,y)=\frac1{|\Gamma_x|}\sum_{y'\in\pi\inv(y)}a(x',y'),$$
    where $\pi$ is the canonical projection, and $x'\in X$ is a lift of $x$, defines a network on $\Gamma\backslash X$, called the \emph{quotient network} of $\mc N$, and denoted by $\Gamma\backslash \mc N$.

    Moreover, the walk on $\Gamma\backslash \mc N$ has the same law as $(\pi(Z_k))_k$, where $(Z_k)_k$ is the walk on $\mc N$. In particular, recurrence of $\Gamma\backslash\mc N$ is equivalent to recurrence of some (equivalently, any) orbit of $\Gamma$ in $X$.
\end{prodfn}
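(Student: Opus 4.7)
The plan is to verify four items in sequence: well-definedness of $a'$, the network axioms, identification of the quotient transition kernel with the law of $(\pi(Z_k))_k$, and the equivalence of recurrences.

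First I would check that the formula for $a'(x,y)$ does not depend on the choice of lift $x'$. Replacing $x'$ by $\gamma x'$ does not change $|\Gamma_{x'}|$ (conjugate stabilizers), and does not change the inner sum either, by $\Gamma$-invariance of $a$ combined with $\Gamma$-invariance of the fiber $\pi\inv(y)$. The network axioms follow next: $m'(x)=\sum_y a'(x,y)$ collapses to $m(x')/|\Gamma_x|$, which is positive and finite under the standing hypothesis that the stabilizers are finite. Symmetry $a'(x,y)=a'(y,x)$ is the main calculation, and I would obtain it by rewriting each sum over the orbit $\pi\inv(y)=\Gamma\cdot y'$ via orbit-stabilizer as $\frac{1}{|\Gamma_{y'}|}\sum_{\gamma\in\Gamma}$, turning $a'(x,y)$ into the manifestly symmetric expression $\frac{1}{|\Gamma_x||\Gamma_y|}\sum_{\gamma\in\Gamma}a(x',\gamma y')$.

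For the identification of the quotient walk, one checks directly that the transition kernel associated to $a'$ is $p'(x,y)=a'(x,y)/m'(x)=\sum_{y'\in\pi\inv(y)}p(x',y')$, i.e. the probability that $\pi(Z_1)=y$ given $Z_0=x'$. Since this quantity depends only on $\pi(x')$ by $\Gamma$-invariance of $P$, the process $(\pi(Z_k))_k$ is Markov with transition kernel $p'$, proving the law-equality assertion.

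For the recurrence equivalence, the law identification shows that $\Gamma\backslash\mc N$ is recurrent (at $\pi(o)$) if and only if $(Z_k)_k$ visits $\Gamma\cdot o$ infinitely often almost surely. The direction I expect to be the main technical point is upgrading ``hits $\Gamma\cdot o$ at least once from any starting point'' to ``hits it infinitely often almost surely''; I would obtain this by applying the strong Markov property at successive hitting times of the orbit, noting that by $\Gamma$-equivariance of $P$ the hitting probability from any point of $\Gamma\cdot o$ equals that from $o$. Finally, irreducibility of $P$ passes to $p'$ at once, so recurrence of the quotient walk does not depend on the starting point, which yields the ``any orbit'' clause.
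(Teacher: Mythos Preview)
Your proposal is correct and follows essentially the same route as the paper: the symmetry of $a'$ is obtained via the same orbit–stabilizer rewriting into the symmetric form $\frac{1}{|\Gamma_x||\Gamma_y|}\sum_{\gamma\in\Gamma}a(x',\gamma y')$, the total conductance computation $m'(\pi(x))=m(x)/|\Gamma_x|$ is identical, and the identification $p'(\pi(x),\pi(y))=\sum_{y'\in\pi^{-1}(y)}p(x,y')=\m P_x(\pi(Z_1)=\pi(y))$ is exactly what the paper records. The only difference is that you spell out the lumpability/Markov argument and the strong-Markov upgrade from ``hits the orbit once'' to ``infinitely often'', whereas the paper treats the recurrence equivalence as an immediate consequence of the law identification and does not elaborate further.
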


\begin{proof}
    First of all, note that the definition of $a'$ is finite and does not depend on the choice of $x'$ by $\Gamma$-invariance.
    Also,
    \begin{eqnarray*}
        a'(y,x)&=&\frac1{|\Gamma_y|}\sum_{x'\in\pi\inv(x)}a(y',x')= \frac1{|\Gamma_y||\Gamma_x|}\sum_{\gamma\in\Gamma}a(y',\gamma x')\\
        &=&\frac1{|\Gamma_x||\Gamma_y|}\sum_{\gamma\in\Gamma} a(x',\gamma\inv y')=a'(x,y)
    \end{eqnarray*}
    and
    $$m'(\pi(x))=\frac1{|\Gamma_x|}\sum_{y\in \Gamma\backslash X}a'(\pi(x),y)=\frac1{|\Gamma_x|}\sum_{y'\in X}a(x,y')=\frac{m(x)}{|\Gamma_x|}<\infty$$
    so that the previous definition indeed defines a network.

    For the moreover part, if $Q$ is the transition matrix associated to $\Gamma\backslash\mc N$, then
    \begin{eqnarray*}
        q(\pi(x),\pi(y))&=&\frac{a'(\pi(x),\pi(y))}{m'(\pi(x))}=\sum_{y'\in\Gamma\cdot y}\frac{a(x,y')}{m(x)}\\
        &=&\sum_{y'\in\Gamma\cdot y}p(x,y')=\m P_x(\pi(Z_1)=\pi(y))
    \end{eqnarray*}
    as claimed.
\end{proof}

\begin{proof}[Proof of Proposition \ref{pro: (positive) recurrence criterion}]
    Let $Q$ be the transition matrix associated to $\Gamma\backslash\mc N$. By Theorem 5.5.12 of \cite{Dur19}, if $Q$ admits a stationary probability measure, then the Markov chain is positive recurrent, and in particular recurrent. Here $m'$ is stationary and of finite volume by assumption, so we are done.
\end{proof}

In order to prove integrability results of some random walks on groups later on, we need to prove integrability results of the return time for the quotient walk.

\begin{pro}\label{pro: exp integrable return time}
    Assume $\Gamma$ acts cocompactly on $X$. Then for any $x\in \Gamma\backslash X$, there is some $c>0$ such that
    $$\m E_x(e^{cT_x})<\infty,$$
    where $T_x$ is the first return time to $x$.
\end{pro}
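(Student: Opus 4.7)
The plan is to reduce the statement to a well-known fact about finite irreducible Markov chains. Indeed, cocompactness of the $\Gamma$-action on $X$ makes $\Gamma\backslash X$ a finite set, so the quotient walk described in Proposition-Definition \ref{dfn: réseau quotient} is a random walk on a finite state space. Moreover, irreducibility of $P$ on $X$ directly implies irreducibility of $Q$ on $\Gamma\backslash X$: if $p^{(n)}(x',y')>0$ for some lifts, then $q^{(n)}(\pi(x'),\pi(y'))>0$. So the problem becomes: for a finite irreducible Markov chain, the first return time to any state has a finite exponential moment.

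For this I would first establish a uniform hitting estimate. By irreducibility, for every state $y\in\Gamma\backslash X$ there exists $n_y\in\m N$ with $q^{(n_y)}(y,x)>0$. Since $\Gamma\backslash X$ is finite, we may set
\[
N=\max_{y\in\Gamma\backslash X}n_y,\qquad \epsilon=\min_{y\in\Gamma\backslash X}\m P_y\big(\exists\,0\le k\le N,\ Z_k=x\big)>0,
\]
obtaining the uniform lower bound $\m P_y(\exists\,0\le k\le N,\ Z_k=x)\ge\epsilon$ for every $y$.

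Next I would apply the strong Markov property at the (deterministic) times $N,2N,\ldots$. Conditioning on not having hit $x$ before time $kN$, the conditional probability of not hitting $x$ between times $kN$ and $(k+1)N$ is at most $1-\epsilon$ by the uniform bound above, independently of the current state. This yields the geometric tail estimate
\[
\m P_x(T_x> kN)\le (1-\epsilon)^k.
\]
Pick any $c>0$ with $e^{cN}(1-\epsilon)<1$, i.e. $c<-\frac1N\log(1-\epsilon)$. A direct summation then gives
\[
\m E_x(e^{cT_x})\le\sum_{k\ge 0}e^{c(k+1)N}\,\m P_x(T_x>kN)\le e^{cN}\sum_{k\ge 0}\bigl(e^{cN}(1-\epsilon)\bigr)^k<\infty,
\]
concluding the proof. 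The argument is essentially routine once the cocompactness hypothesis has been used to pass to a finite state space; I do not anticipate any serious obstacle.
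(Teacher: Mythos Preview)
Your proposal is correct and follows essentially the same approach as the paper: both use finiteness of $\Gamma\backslash X$ and irreducibility to get a uniform lower bound on the probability of hitting $x$ within $N$ steps from any state, deduce a geometric tail $\m P_x(T_x>kN)\le(1-\epsilon)^k$, and conclude that $e^{cT_x}$ is integrable for small $c$. The only cosmetic difference is that you spell out the final summation explicitly, whereas the paper simply notes that the geometric tail is ``enough''.
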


\begin{proof}
    We prove that there are some $a<1$ and $N\in\m N$ such that for all $y\in\Gamma\backslash X$, $\m P_y(T_x\ge Nn)\le a^n$, which is enough.
    
    By irreducibility, for any $y\in\Gamma\backslash X$, there are some $N_y\in\m N$ and $\alpha_y>0$ such that ${p^{(N_y)}(y,x)=\alpha_y}$. In particular, $\m P_y(T_x\le N_y)\ge\alpha_y$. Since there are finitely many states, take $N$ maximal among the $N_y$, and $\alpha$ minimal among the $\alpha_y$, so that now for any $y$, 
    $$\m P_y(T_x\le N)\ge \m P_y(T_x\le N_y)\ge \alpha_y\ge \alpha.$$ 
    But now,
    \begin{eqnarray*}
        \m P_y(T_x\ge(n+1)N\:|\:T_x\ge nN)&=&\sum_{z\in\Gamma\backslash X} p^{(nN)}(y,z)\m P_z(T_x\ge N)\\
        &\le& \sum_{z\in\Gamma\backslash X} p^{(nN)}(y,z)(1-\alpha)=1-\alpha.
    \end{eqnarray*}
    Multiplying this inequality $n$ times, we indeed get $\m P_y(T_x\ge Nn)\le a^n$, with $a=1-\alpha$.
\end{proof}

\begin{rem}\label{rem: integrable return time}
Of course, if $\Gamma$ is acting transitively instead of cocompactly, $T_x=1$ a.s., so that the random walk on $\Gamma$ constructed in Section \ref{ssec: Induced walk} is actually of bounded range.

In the non-uniform case, the integrability result should be related to the speed at which $1/|\Gamma_x|$ decreases, although it is unclear what the relation should exactly be. However, since the proof of Proposition \ref{pro: (positive) recurrence criterion} shows that the quotient walk is actually positive recurrent, we have $\m E_x(T_x)=\frac {|\Gamma_x|}{m(x)}<\infty$, so the return time is at least integrable.
\end{rem}

\subsection{The Poisson-Furstenberg boundary of a lattice}
In this section we prove the following result, of independent interest:

\begin{theo}\label{theo: B_PF lattice}
    Let $X$ be a graph, and $P$ be an irreducible reversible Markov chain with bounded total conductance, and let $\Gamma<\Aut(X,P)$ be a lattice. Then there exists an admissible measure $\mu$ on $\Gamma$ such that 
    $$B_{PF}(X,P)\simeq B_{PF}(\Gamma,\mu),$$
    where the isomorphism is $\Gamma$-equivariant, and $\mu$ is symmetric whenever $P$ is.
\end{theo}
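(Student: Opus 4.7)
The plan is to reduce to a random walk on a single $\Gamma$-orbit of $X$ and then transfer that walk to $\Gamma$ itself. Fix $o\in X$ and set $Y:=\Gamma\cdot o$. Since $P$ is reversible with bounded total conductance $m$ and $\Gamma$ is a lattice in $X$, one has $\sum_{x\in D}m(x)/|\Gamma_x|<\infty$ for any fundamental domain $D$, so Proposition \ref{pro: (positive) recurrence criterion} gives that $Y$ is recurrent. Applying Theorem \ref{theo: inducedRW} yields a $\Gamma$-invariant Markov chain $Q=(q(x,y))_{x,y\in Y}$ on $Y$, together with a $\Gamma$-equivariant isomorphism $H^\infty(X,P)\simeq H^\infty(Y,Q)$.

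I identify $Y$ with $\Gamma/\Gamma_o$ via $\gamma\mapsto\gamma o$ and define
$$\mu(\eta):=\frac{1}{|\Gamma_o|}\,q(o,\eta o),\qquad\eta\in\Gamma.$$
Summing over $\Gamma_o$-cosets shows $\mu$ is a probability measure, and it is $\Gamma_o$-bi-invariant: right-invariance is clear since $\Gamma_o$ fixes $o$, and left-invariance follows from $\Gamma$-invariance of $Q$ via $q(o,\sigma\eta o)=q(\sigma^{-1}o,\eta o)=q(o,\eta o)$ for $\sigma\in\Gamma_o$. A short computation using bi-invariance shows that the projection $\pi:\Gamma\to Y$ intertwines the right $\mu$-random walk on $\Gamma$ with the $Q$-walk on $Y$. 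Admissibility follows from irreducibility of $Q$: for any $\gamma\in\Gamma$ there is some $n$ with $q^{(n)}(o,\gamma o)>0$, and since $\mu^{*n}$ is again $\Gamma_o$-bi-invariant, this forces $\mu^{*n}(\gamma)>0$. When $P$ is symmetric, reversing paths in the definition of $q$ shows $Q$ is symmetric; combined with $\Gamma$-invariance one then gets $\mu(\eta^{-1})=\mu(\eta)$.

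The heart of the argument is identifying the Poisson-Furstenberg boundaries, which by the Poisson formula reduces to producing a $\Gamma$-equivariant isomorphism $H^\infty(\Gamma,\mu)\simeq H^\infty(Y,Q)$. Pulling back by $\pi$ sends $H^\infty(Y,Q)$ into $H^\infty(\Gamma,\mu)$; the main obstacle I expect is surjectivity, that is, showing every bounded $\mu$-harmonic function $\tilde h$ on $\Gamma$ is constant on $\Gamma_o$-right cosets and hence factors through $\pi$. This is where bi-invariance of $\mu$ plays its essential role. Introduce
$$\tilde h_{\mathrm{avg}}(\gamma):=\frac{1}{|\Gamma_o|}\sum_{\sigma\in\Gamma_o}\tilde h(\gamma\sigma),$$
apply $\mu$-harmonicity to each term $\tilde h(\gamma\sigma)$, and perform the change of variables $\eta\mapsto\sigma^{-1}\eta$ inside the sum; by left $\Gamma_o$-invariance of $\mu$ this rearrangement collapses to $\tilde h_{\mathrm{avg}}(\gamma)=\tilde h(\gamma)$. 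Since $\tilde h_{\mathrm{avg}}$ is manifestly $\Gamma_o$-right invariant, so is $\tilde h$, and it descends to a bounded $Q$-harmonic function on $Y$. Composing the two $\Gamma$-equivariant isomorphisms $H^\infty(X,P)\simeq H^\infty(Y,Q)\simeq H^\infty(\Gamma,\mu)$ then yields the desired $\Gamma$-equivariant isomorphism of Poisson-Furstenberg boundaries.
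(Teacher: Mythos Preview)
Your proof is correct and follows essentially the same route as the paper: induce the walk to $\Gamma\cdot o$ via Proposition~\ref{pro: (positive) recurrence criterion} and Theorem~\ref{theo: inducedRW}, then lift to $\Gamma$ via $\mu(\eta)=\tfrac{1}{|\Gamma_o|}q(o,\eta o)$ and identify bounded harmonic functions. The only cosmetic difference is your proof of right $\Gamma_o$-invariance of $\mu$-harmonic functions: you use an averaging trick, whereas the paper simply observes that $r(\gamma\alpha,\delta)=\tfrac{1}{|\Gamma_o|}q(\gamma\alpha o,\delta o)=\tfrac{1}{|\Gamma_o|}q(\gamma o,\delta o)=r(\gamma,\delta)$ for $\alpha\in\Gamma_o$, so harmonicity at $\gamma\alpha$ and at $\gamma$ are literally the same equation.
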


\begin{proof}
    Fix once and for all a root vertex $o\in X.$

    By Proposition \ref{pro: (positive) recurrence criterion} and Theorem \ref{theo: inducedRW}, we may induce the random walk on $X$ to a random walk $Q$ on $\Gamma\cdot o$, verifying 
    $$H^\infty(X,P)\simeq H^\infty(\Gamma\cdot o,Q).$$
    
    Moreover, this random walk is symmetric whenever $P$ is, and $\Gamma$-invariant in virtue of the ``moreover" part of Proposition \ref{theo: inducedRW}. We can now pull back this random walk to $\Gamma$, in the same way as \cite[Proposition 8.13]{Woe00}:\medskip
    
    Let $\Gamma_o$ be the stabilizer of $o$ in $\Gamma$. By definition of a lattice, it is finite. 
    
    We can then define a random walk on $\Gamma$ by letting 
    $$r(\gamma,\delta)=\frac1{|\Gamma_o|}q(\gamma o,\delta o).$$
    It is easily seen to be a random walk on $\Gamma,$ and as
    $Q$ is $\Gamma$-invariant, so is $R$, and $R$ does come from a measure $\mu$ on $\Gamma$, simply by letting $\mu(\gamma)=r(e,\gamma)$. The measure $\mu$ is symmetric whenever $Q$ and $R$ are, which is when $P$ is. Finally, as $Q$ is transitive, $R$ also is and $\mu$ is admissible.\medskip

    It is left to prove a correspondence between bounded harmonic functions for $Q$ and for $R$.
    
    $\bullet$ Let $h$ be a bounded harmonic function on $\Gamma\cdot o,$ and define $\tilde h$ on $\Gamma$ by 
    $$\tilde h(\gamma)=h(\gamma o).$$
    Then 
    \begin{eqnarray*}
    \sum_{\delta\in\Gamma}r(\gamma,\delta)\tilde h(\delta)&=& \sum_{\delta\Gamma_o\in\Gamma_o\backslash\Gamma}\sum_{\alpha\in \Gamma_o}r(\gamma,\delta\alpha)h(\delta\alpha o)\\
    &=&\sum_{\delta\Gamma_o\in\Gamma_o\backslash\Gamma}\sum_{\alpha\in \Gamma_o}\frac1{|\Gamma_o|}q(\gamma o,\delta o)h(\delta o)\\
    &=&\sum_{\delta\Gamma_o\in\Gamma_o\backslash\Gamma}q(\gamma o,\delta o)h(\delta o)\\
    &=&\sum_{y\in\Gamma\cdot o}q(\gamma o,y)h(y)=h(\gamma o)=\tilde h(\gamma),
    \end{eqnarray*}
    so that $\tilde h$ is indeed a harmonic function on $\Gamma$.\\
    $\bullet$ Now let $h$ be a harmonic function on $\Gamma$, we first prove that it is constant on the left cosets of $\Gamma_o$. Indeed, since 
    \begin{eqnarray*}
    h(\gamma)=\sum_{\delta\in\Gamma}\frac1{|\Gamma_o|}q(\gamma o,\delta o)h(\delta),
    \end{eqnarray*}
    we have, for any $\alpha\in\Gamma_o,$
    \begin{eqnarray*}
    h(\gamma\alpha)=\sum_{\delta\in\Gamma}\frac1{|\Gamma_o|}q(\gamma \alpha o,\delta o)h(\delta)=\sum_{\delta\in\Gamma}\frac1{|\Gamma_o|}q(\gamma o,\delta o)h(\delta)=h(\gamma).
    \end{eqnarray*}
    Now for some point $y=\gamma o\in\Gamma\cdot o,$ we may define $\tilde h(\gamma o)$ to be $h(\gamma)$, which does not depend on the choice of $\delta$ for which $y=\delta o,$ as those elements are precisely the left coset $\gamma\Gamma_o$ in $\Gamma$. Now
    \begin{eqnarray*}
    \sum_{y\in\Gamma\cdot o}q(\gamma o,y)\tilde h(y)&=&\sum_{\delta\Gamma_o\in\Gamma_o\backslash\Gamma}q(\gamma o,\delta o)\tilde h(\delta o)\\
    &=&\sum_{\delta\Gamma_o\in\Gamma_o\backslash\Gamma} |\Gamma_o|r(\gamma,\delta)h(\delta)\\
    &=&\sum_{\delta\Gamma_o\in\Gamma_o\backslash\Gamma} \sum_{\alpha\in\Gamma_o} r(\gamma,\delta)h(\delta)\\
    &=&\sum_{\delta\Gamma_o\in\Gamma_o\backslash\Gamma} \sum_{\alpha\in\Gamma_o} r(\gamma,\delta\alpha)h(\delta\alpha)\\
    &=&\sum_{\delta\in\Gamma}r(\gamma,\delta)h(\delta)=h(\gamma)=\tilde h(\gamma o),
    \end{eqnarray*}
    which proves harmonicity.\medskip

    The two functions we defined are clearly inverses of each other, so that we have $$H^\infty(X,P)\simeq H^\infty(\Gamma\cdot o,Q)\simeq H^\infty(\Gamma,\mu),$$
    and finally 
    $$B_{PF}(X,P)\simeq B_{PF}(\Gamma\cdot o,Q)\simeq B_{PF}(\Gamma,\mu)$$
    as desired.
\end{proof}

We now give integrability results of the measure $\mu$ we just constructed. The moments considered in this paper will be with respect to the pseudo-distance induced by the action on the graph. More precisely,
\begin{dfn}
    Let $\mu$ be a measure on $\Gamma$, and fix $o\in X$. We say $\mu$ has \emph{finite first moment} if 
    $$\int_\Gamma d(o,\gamma o)\d\mu(\gamma)<\infty.$$
    Likewise, we say $\mu$ has a \emph{finite exponential moment} if there is a constant $c>0$ such that 
    $$\int_\Gamma \exp(cd(o,\gamma o))\d\mu(\gamma)<\infty.$$
    Both of these notions (and the constant $c$) can easily be seen to be independent of the choice of the basepoint $o$.
\end{dfn}

\begin{pro}\label{pro: integrability results of walk on lattice}
    Keep the notations of Theorem \ref{theo: B_PF lattice}, and assume further that $X$ is equipped with a locally finite graph structure for which $P$ is of bounded range. Then the measure $\mu$ always has finite first moment, has a finite exponential moment when $\Gamma$ is acting cocompactly and has finite support when $\Gamma$ is acting vertex-transitively.
\end{pro}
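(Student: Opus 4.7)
The plan is to translate every moment of $\mu$ on $\Gamma$ into an expectation involving the induced walk on $Y=\Gamma\cdot o$, and then use the bounded range hypothesis together with return time estimates to conclude. Recall that by construction $\mu(\gamma)=\frac{1}{|\Gamma_o|}q(o,\gamma o)$, and that the fiber of $\gamma\mapsto \gamma o$ above a point $y\in Y$ has cardinality $|\Gamma_o|$. It follows immediately that for any non-negative function $\varphi$,
$$\int_\Gamma \varphi(d(o,\gamma o))\,\mathrm d\mu(\gamma)=\sum_{y\in Y}q(o,y)\varphi(d(o,y))=\m E_o\bigl(\varphi(d(o,Z_{\tau_1}))\bigr).$$
Since $P$ has bounded range, there is some $R>0$ such that $d(Z_{k-1},Z_k)\le R$ almost surely, so $d(o,Z_{\tau_1})\le R\tau_1$ by the triangle inequality. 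It therefore suffices to control the integrability of $\tau_1$ under $\m P_o$.

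Now $\tau_1$ is precisely the first return time of the quotient walk on $\Gamma\backslash X$ to the class $[o]$, by the identification of the two walks given in Proposition-Definition \ref{dfn: réseau quotient}. In the general (lattice) case, the finite stationary measure $m'([x])=m(x)/|\Gamma_x|$ on $\Gamma\backslash X$ produced in the proof of Proposition \ref{pro: (positive) recurrence criterion} is of finite total mass by the lattice assumption; by Remark \ref{rem: integrable return time} this gives $\m E_o(\tau_1)<\infty$. Combined with the bound $d(o,Z_{\tau_1})\le R\tau_1$, this yields finiteness of the first moment of $\mu$.

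When $\Gamma$ acts cocompactly on $X$, local finiteness ensures that $\Gamma\backslash X$ is a finite set, so the quotient walk is an irreducible Markov chain on a finite state space. Proposition \ref{pro: exp integrable return time} then provides some $c>0$ with $\m E_o(e^{c\tau_1})<\infty$, hence $\m E_o(e^{(c/R)d(o,Z_{\tau_1})})<\infty$, which is exactly the finite exponential moment of $\mu$. Finally, when $\Gamma$ acts vertex-transitively we have $Y=X$ and the first hitting time satisfies $\tau_1=1$ almost surely, so that $q=p$ on $Y\times Y$. Since $P$ is of bounded range on the locally finite graph $X$, the support of $p(o,\cdot)$ is finite, and each of its points has only $|\Gamma_o|<\infty$ preimages in $\Gamma$; therefore $\mu$ has finite support. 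The only point requiring any care is bookkeeping in the translation from $\mu$-integrals to expectations under $\m P_o$, which is handled by the fiber count above; the rest is a direct application of the results established earlier.
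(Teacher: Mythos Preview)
Your proof is correct and follows essentially the same approach as the paper: translate $\mu$-integrals into expectations of the induced walk via the fiber-counting identity, bound $d(o,Z_{\tau_1})$ by a constant times $\tau_1$ using bounded range, and then invoke Remark \ref{rem: integrable return time} and Proposition \ref{pro: exp integrable return time} for the three cases. If anything, you are slightly more careful than the paper in writing the exponential moment constant as $c/R$ and in noting the $|\Gamma_o|$-to-one fiber count for the finite-support conclusion.
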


\begin{proof}
    Let $o\in X$ be the base point giving rise to $\mu$ as in the proof of Theorem \ref{theo: B_PF lattice}. Since $P$ is of bounded range, let $A$ be such that if $P(o,x)>0$, then $d(o,x)\le A$. Let $(Z_n)_n$ be the random walk associated to $P$, and $(S_n)_n$ be the walk associated to $(\Gamma\cdot o,Q)$ given by Proposition \ref{theo: inducedRW}. Let $T$ be the return time to $\pi(o)$ for the quotient walk on $\Gamma\backslash X$. Then $d(o,Z_n)\le nA$ and $S_1=Z_T$ happen $\m P_o$-a.s., so that $d(o,S_1)\le AT$ $\m P_o$-a.s.

    Now note that if $f:X\to \m R_{\ge 0}$ is any map, then
    $$\int_\Gamma f(\gamma o)\d\mu(\gamma)= \frac1{|\Gamma_o|}\sum_{\gamma\in\Gamma}f(\gamma o)q(o,\gamma o) =\sum_{x\in\Gamma\cdot o}f(x)q(o,x)=\m E_o(f(S_1)).$$
    By Remark \ref{rem: integrable return time}, we then have that 
    $$\int_\Gamma d(o,\gamma o)\d\mu(\gamma)=\m E_o(d(o,S_1))\le A\m E_o(T)<\infty,$$
    so that $\mu$ indeed has finite first moment.

    If now $\Gamma$ is cocompact, Proposition \ref{pro: exp integrable return time} gives the existence of some $c>0$ such that 
    $$\int_\Gamma \exp(cd(o,\gamma o))\d\mu(\gamma)=\m E_o(\exp(cd(o,S_1)))\le \m E_o(e^{cAT})<\infty,$$
    so that $\mu$ indeed has a finite exponential moment.

    Finally, if $\Gamma$ is vertex-transitive, then $T=1$ a.s. and $d(o,S_1)$ is bounded a.s. But then $q(o,\cdot)$ has finite support and so does $\mu$.
\end{proof}

\subsection{Random walks on buildings}
We now apply what we have previously done, and especially Theorem \ref{theo: B_PF lattice}, to the context of buildings. For this, we need to describe the Poisson Furstenberg boundary of a building, and in fact show that under some regularity condition of the walk, the Poisson-Furstenberg boundary is the set of chambers at infinity with the harmonic measures.\medskip

There is a certain class of very nicely behaved random walks on $V_P$ called \emph{isotropic} random walks. These are random walks $P$ such that 
$$\sigma(x,y)=\sigma(x',y')\implies p(x,y)=p(x',y').$$
In other words, the probability of going from $x$ to $y$ only depends on the $P$-distance between $x$ and $y$.

The following result is known, or at least strongly believed to be true among experts, but we could not find a written proof of it, so we include one here. This proof uses the (much stronger) description of the \emph{Martin boundary} of a building.  

\begin{theo}\label{theo: Poisson boundary of building}
    Let $X$ be a regular thick affine building. Let $P$ be an isotropic random walk on $X$ with bounded range. Then the Poisson-Furstenberg boundary of $(X,P)$ is $\Aut(X)$-equivariantly isomorphic to $(\Omega,(\nu_x)_x)$, where the $\nu_x$ are the harmonic measures defined in \ref{ssec:harm meas infty}.
\end{theo}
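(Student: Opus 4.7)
The plan is to deduce this from the description of the Martin boundary of $(X,P)$ given by Rémy and Trojan in \cite{RT21}, combined with the classical identification of the Poisson-Furstenberg boundary with the minimal Martin boundary endowed with its hitting measure. Since $P$ is isotropic and of bounded range, the walk is transient (via the drift analysis of the averaging operator in positive rank, or a standard argument on trees in rank one), so the Green function $G(x,y) = \sum_{n \ge 0} p^{(n)}(x,y)$ is finite and the Martin kernel $K(x,y) = G(x,y)/G(o,y)$ is well defined.

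The first step is to extract from \cite{RT21} the identification of the minimal Martin boundary of $(X,P)$ with $\Omega$. For each chamber $\omega \in \Omega$ and any sequence $y_n \in V_P$ going to infinity inside $\mc S(o,\omega)$, the limit $k_\omega := \lim_n K(\cdot,y_n)$ exists, is a minimal positive harmonic function, and is computed explicitly in \cite{RT21} in terms of the function $h(o,\cdot;\omega)$. What must be added is that every minimal positive harmonic function is of this form: the full Martin compactification in \cite{RT21} contains additional strata indexed by residues of positive cotype in the spherical building at infinity, and one shows by a direct asymptotic computation that the corresponding boundary harmonic functions decompose non-trivially as integrals of the $k_\omega$, hence are not extremal.

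Once the minimal Martin boundary is identified with $\Omega$, Doob's theorem gives that the Poisson-Furstenberg boundary is $(\Omega,(\hat\nu_x)_x)$, where $\hat\nu_x$ is the hitting measure of the walk started at $x$. It remains to identify $\hat\nu_x$ with the harmonic measure $\nu_x$. By the minimal Martin representation, $d\hat\nu_x/d\hat\nu_o(\omega) = k_\omega(x)$. The explicit formula of Rémy and Trojan should match the Radon-Nikodym derivative from Proposition \ref{Radon-Nikodym nu}, so that $d\hat\nu_x/d\hat\nu_o = d\nu_x/d\nu_o$; combined with a normalisation at the basepoint (which follows from $P$-stationarity of $\hat\nu_o$ together with the uniform characterisation of $\nu_o$ in terms of the sizes $N_\lambda$ and isotropy of $P$), this yields $\hat\nu_x = \nu_x$ for every $x \in V_P$. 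The $\Aut(X)$-equivariance is built into both sides of the identification, and this gives the claim.

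I expect the main obstacle to be the extremality step: while \cite{RT21} compute the full Martin compactification, extracting the statement that only chamber-type boundary points are minimal, and producing the required convex decompositions for the non-chamber strata, requires careful asymptotic analysis of $K(\cdot,y_n)$ when $y_n$ approaches residues of positive cotype. Once extremality is in hand, matching the measures reduces to comparing the explicit Martin kernel with the Radon-Nikodym formula of Proposition \ref{Radon-Nikodym nu}, which is essentially a bookkeeping computation.
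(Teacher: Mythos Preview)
Your outline is a reasonable strategy, but it diverges from the paper's proof in two places, and in both places the paper's route is shorter.

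First, the paper does not attempt to identify the minimal Martin boundary at all. After invoking \cite{RT21} to describe the Martin compactification $\hat{\mc M}(X)$ as the join of the visual and the Furstenberg compactifications, it instead uses a drift result for isotropic walks (Theorems 3.9 and 3.11 of \cite{3.pdf}): almost surely $\sigma(o,Z_k)$ converges to a point $\xi$ in the \emph{interior} of the fundamental chamber at infinity. This forces the hitting measure $\mu_o$ to be supported on the set $\Omega_\xi$ of Martin boundary points reached along the direction $\xi$, and $\Omega_\xi$ is in $\Aut(X)$-equivariant bijection with $\Omega$. This completely bypasses the extremality analysis you flag as the main obstacle; the non-chamber strata of the Martin boundary are simply shown to have hitting measure zero, so whether they are minimal or not is irrelevant.

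Second, to identify the hitting measure with $\nu_o$, the paper does not compare Radon--Nikodym derivatives via the explicit Martin kernel. Instead it gives a direct symmetry argument: for fixed $\lambda\in P^+$ and $x\in V_\lambda(o)$, one shows that $\m P_o(Z_\infty\in\Omega_o(x))$ is independent of the choice of $x$ (using isotropy and regularity of the building to build a $\sigma$-distance-preserving bijection between the sets $A_x$ and $A_y$), hence equals $1/N_\lambda$. This pins down $\mu_o=\nu_o$ by the characterisation of $\nu_o$.

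Your approach would work in principle, but the extremality step for the full join boundary is genuinely nontrivial (the non-chamber strata involve both the visual boundary and the lower-dimensional Furstenberg strata, not only residues of positive cotype), and matching the Martin kernel at $\zeta=1$ with $\chi(h(o,x;\omega))$ requires unpacking the Macdonald-type formulas in \cite{RT21}. The paper's drift-plus-symmetry argument trades this analytic work for two soft inputs (directional convergence and regularity of the building).
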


\begin{proof}
    Since the building is thick, the spectral radius $\rho$ is strictly smaller than $1$, hence the case $\zeta=1$ we are interested in falls under the ``above the spectrum case" of Theorem A of \cite{RT21}, for which the Martin compactification $\hat{\mc M}(X)$ of $X$ is ($\Aut(X)$-equivariantly) isomorphic to the join of the visual and the Furstenberg compactifications of the building.\medskip
    
    Fix $o\in V_P$, by Theorem 10.18 of \cite{KKS66}, the walk $(Z_n)_n$ converges $\m P_o$-almost surely to some point $Z_\infty$ in $\hat{\mc M}(X)$; we let $\mu_o$ be the distribution of $Z_\infty$. By Lemma 10.42 of \cite{KKS66}, the Poisson-Furstenberg boundary of $X$ can be identified with $(\hat{\mc M}(X),\mu_o)$.
    
    By Theorem 3.9 of \cite{3.pdf}, $\sigma(o,Z_k)$ converges to a point $\xi\in\partial_\infty\Sigma$, and by \cite[Theorem 3.11]{3.pdf}, $\xi$ is in the interior of the fundamental chamber at infinity. Hence, $\mu_o$ is supported on the set 
    $$\Omega_\xi=\{x\in\mc M(X),\exists (x_n)_n\in X^\m N, x_n\tendv x\text{ and }\sigma(o,x_n)\tendv \xi\}$$
    of points in the direction $\xi$. This set is clearly in $\Aut(X)$-equivariant bijection with $\Omega$ as there is a unique point of direction $\xi$ in any $\omega\in\Omega$.
    
    Hence we may push $\mu_o$ to a measure on $\Omega$, which we will continue calling $\mu_o$. It remains only to show that the hitting distribution of $Z_\infty$ in $\Omega$ is actually $\nu_o$ to be done.
    
    To that end, let $\lambda\in P^+$, consider the partition of $\Omega$ into clopen sets given by
    $$\Omega=\bigsqcup_{x\in V_\lambda(o)}\Omega_o(x).$$
    Since $Z_n$ converges a.s. to some point in $\Omega_\xi$, there are $x\in V_\lambda(o)$, and $N\in\m N$ such that 
    $$Z_n\in A_x:=\bigcup_{\omega\in\Omega_o(x)}\mc S(x,\omega)$$
    for all $n\ge N$. Note that since the $\Omega_o(x)$ are disjoint, the $x$ verifying the above condition is unique.
    Hence if we let $k$ be the smallest such $N$, 
    $$Z_k\in A_x\iff Z_\infty \in\Omega_o(x).$$
     Moreover, $\m P_o(Z_k\in A_x)$ does not depend on the choice of $x\in V_\lambda(o)$. Indeed, since $A_x$ is the set of points $a\in V_P$ such that there is a minimal path from $o$ to $a$ going through $x$. Since $X$ is regular, we can thus find a bijection $\phi_{x,y}:A_x\to A_y$ which preserves the $\Sigma$-distance to $o$. As such, for any $x,y\in V_\lambda(o)$,
     \begin{eqnarray*}
         \m P_o(Z_k\in A_x)&=&\sum_{n\in\m N}\sum_{a\in A_x}\m P_o(Z_n=a|k=n)\m P_o(k=n)\\
         &=&\sum_{n\in\m N}\sum_{a\in A_x}\m P_o(Z_n=\phi_{x,y}(a)|k=n)\m P_o(k=n)\\
         &=&\sum_{n\in\m N}\sum_{b\in A_y}\m P_o(Z_n=b|k=n)\m P_o(k=n)=\m P_o(Z_k\in A_y)
     \end{eqnarray*}
    Hence we have 
    $$\m P_o(Z_\infty\in\Omega_o(x))=\m P_o(Z_k\in A_x)$$
    where the right-hand side does not depend on the specific $x\in V_\lambda(o)$ we chose. Finally, we get 
    $$\mu_o(\Omega_o(x))=\m P_o(Z_\infty\in\Omega_o(x))=\frac 1{N_\lambda},$$
    which proves $\mu_o=\nu_o$ since the above equality holds for all $\lambda\in P^+$.
\end{proof}

Combining the previous theorem with Theorem \ref{theo: B_PF lattice} we get the following:

\begin{theo}\label{theo: discretization of walk}
    Let $X$ be a regular thick affine building, and let $\Gamma$ be a lattice in $X$, in the sense of Definition \ref{dfn: réseau dans un graphe}. Then there is a symmetric probability measure $\mu$ on $\Gamma$ such that 
    $$B_{PF}(\Gamma,\mu)\simeq (\Omega,(\nu_x)_x).$$
    Moreover, the isomorphism is $\Gamma$-equivariant, and in particular $(\Omega,\nu_x)$ is a boundary of $\Gamma$.
\end{theo}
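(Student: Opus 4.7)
The plan is to concatenate Theorems \ref{theo: B_PF lattice} and \ref{theo: Poisson boundary of building}, once one fixes a suitable ambient random walk on the building.

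First I would exhibit a symmetric, irreducible, isotropic, bounded-range random walk $P$ on $V_P$. For instance, one may pick a finite subset $\Sigma_0 \subset P^+$ that is invariant under $\lambda \mapsto -w_0\lambda$ and that generates $P$ as an abelian group, and let $P$ jump from $x$ uniformly to $\bigcup_{\lambda \in \Sigma_0} V_\lambda(x)$. Isotropy is immediate from the construction, bounded range is clear, symmetry follows from the identity $\sigma(x,y) = \lambda \iff \sigma(y,x) = -w_0\lambda$, and irreducibility follows from the generation condition on $\Sigma_0$. Since $P$ is symmetric and finitely supported, it is reversible with uniformly bounded total conductance (taking $a(x,y)=p(x,y)$ yields $m(x) = 1$); isotropy ensures $\Gamma < \Aut(V_P,P)$; and the lattice hypothesis of Definition \ref{dfn: réseau dans un graphe} transfers directly from the building lattice structure on $\Gamma$.

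Next, Theorem \ref{theo: B_PF lattice} produces a symmetric admissible measure $\mu$ on $\Gamma$ together with a $\Gamma$-equivariant isomorphism
$$B_{PF}(X,P) \simeq B_{PF}(\Gamma,\mu),$$
while Theorem \ref{theo: Poisson boundary of building} provides an $\Aut(X)$-equivariant (and hence $\Gamma$-equivariant) isomorphism $B_{PF}(X,P) \simeq (\Omega,(\nu_x)_x)$. Composing the two yields the desired identification $B_{PF}(\Gamma,\mu) \simeq (\Omega,(\nu_x)_x)$, $\Gamma$-equivariantly.

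The ``in particular'' clause follows from \cite[Theorem 2.7]{BF14}, according to which the Poisson-Furstenberg boundary of an lcsc group associated to an admissible measure is a strong boundary in the Bader-Furman sense; since our $\mu$ is admissible, $(\Omega,\nu_x)$ is such a boundary of $\Gamma$. Beyond this assembly there is essentially no obstacle, the substantive work having been carried out in the two theorems we combine; the only mildly technical point is the initial choice of the walk $P$, which has to satisfy isotropy, symmetry, bounded range and irreducibility simultaneously, but any finitely-supported walk of the form described above works.
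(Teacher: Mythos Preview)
Your proposal is correct and follows exactly the paper's approach: the paper's own argument for this theorem is the single sentence ``Combining the previous theorem with Theorem \ref{theo: B_PF lattice} we get the following'', and you have simply spelled out that combination, including the (routine) choice of a symmetric isotropic bounded-range walk on $V_P$ to feed into both theorems. Your added detail on the construction of $P$ and the verification of the hypotheses of Theorem \ref{theo: B_PF lattice} is appropriate filling-in of what the paper leaves implicit.
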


\section{Strong boundaries and Weyl groups}\label{sec: Strong boundaries and Weyl groups}
Often times, rigidity properties come from a careful study of a \emph{boundary} object associated to the group or the action considered. Here our boundary will be the Poisson-Furstenberg boundary of our building (or building lattice), although the reasoning here can be done in the slightly more general setting of a \emph{strong boundary}, as defined in the works of Bader and Furman. After a couple definitions, and examples, the theory of prouniform measures developed in \cite{BFL23} is used to define a new family of measures on the boundary, and these measures are then used to prove Proposition \ref{pro: the special subgroups are the special subgroups}, essentially proving that the special subgroups of the Weyl group in the sense of Bader-Furman are the special subgroups of the Weyl group in the classical sense.

\subsection{Definitions and examples}
In order to define the notion of a strong boundary, we need the following definition, see for instance \cite{GW16}.
\begin{dfn}\label{dfn: ergo isom}
    A measure-class preserving action of a second countable locally compact group $G\curvearrowright (X,\mu)$ is said to be \emph{ergodic with isometric coefficients}, or \emph{isometrically ergodic}, if for any action of $G$ on some separable metric space $(Z,d)$ on which $G$ acts by isometries, any $G$-equivariant measurable map 
    $$\phi:X\to Z$$
    is essentially constant.
\end{dfn}

\begin{rem}
    Note that isometric ergodicity implies ergodicity: to see this, take $Z=\{0,1\}$ with the trivial $G$-action.
    
    One might also think that isometric ergodicity is a very strong condition, but it is in fact implied by double ergodicity, i.e. ergodicity of the diagonal action on $X\times X$. Indeed, let $(Z,d),\phi$ be as in Definition \ref{dfn: ergo isom}. Then the map $\Phi:X\times X\to \m R$ defined by $\Phi(x,y)=d(\phi(x),\phi(y))$ is $G$-invariant measurable, hence essentially constant by double ergodicity. If the essential value of $\Phi$ is $0$, $\phi$ is indeed essentially constant. Otherwise, the essential image of $\phi$ is discrete, hence countable by separability. But then $\phi_*\mu$ is atomic, so that $\phi(x)=\phi(y)$ happens with positive $\mu\times \mu$-measure, and $\Phi$ takes the value $0$ on a positive measure set, a contradiction.
\end{rem}

Recall also the notion of an amenable action in the sense of Zimmer (see \cite{Zim84}). This property is most often used in the following way: if $G$ is a lcsc group acting by measure-class preserving automorphisms on a standard Borel space $(X,\mu)$ and if the action is amenable, then for any compact metric $G$-space $K$, there is a $G$-equivariant map $\phi:X\to\mathrm{Prob}(K)$.

Following \cite{GHL22}, we will also say that an action of $G$ on a discrete space $\m D$ is \emph{universally amenable} if for any measure $\mu$ for which the action of $G$ is measure-class preserving, the action on $(\m D,\mu)$ is (Zimmer-)amenable.

\begin{dfn}\label{dfn: strong boundary}
    Let $G$ be a locally compact group, and let $(B,\mu)$ be a standard probability space equipped with a measure-class preserving $G$-action. The space $(B,\mu)$ is a \emph{strong boundary} for $G$ if
    \begin{enumerate}
        \item The action $G\curvearrowright B$ is amenable;
        \item The diagonal action $G\curvearrowright B\times B$ is isometrically ergodic.
    \end{enumerate}
\end{dfn}

\begin{ex}\label{ex: BPF is strong boundary}
    As shown in \cite[Proposition 2.7]{BF14}, a general source of examples of strong boundaries are Poisson-Furstenberg boundaries. Indeed, let $G$ be a lcsc group, and $\mu$ be a spread-out symmetric measure on $G$, and let $(B,\nu)$ be the Poisson-Furstenberg boundary of $(G,\mu)$. Then $(B,\nu)$ is a strong boundary of $G$.
\end{ex}

In fact this is the only example of strong boundary we will need, since the previous part of this work was on describing certain Poisson-Furstenberg boundaries.\medskip

The following generalization of the Weyl group was first introduced by Bader and Furman, see for instance \cite{BF14}.
\begin{dfn}
    Let $B$ be a strong boundary of $G$. The \emph{generalized Weyl group} $W_{G,B}$ is the group of $G$-equivariant measure-class preserving automorphisms $\Aut_G(B\times B)$ of $B\times B$.
    
    We will say that a subgroup $W'< W_{G,B}$ is \emph{special} if there exists a standard Borel space $C$ equipped with a Borel $G$-action, and a $G$-equivariant Borel map $\pi : B \to C$ such that 
    $$W' = \{w \in W_{G,B}|\pi\circ \pr_1 \circ w = \pi\circ \pr_1\}.$$
    We write $W' = W_{G,B}(\pi)$ in this situation.
    
    If $H<W_{G,B}(\pi)$ is a subgroup which contains $w_{\rm{flip}}:(b,b')\mapsto (b',b)$, we will also say that $H'<H$ is special in $H$ if $H'=H\cap W_{G,B}(\pi)$ for some $\pi$.
\end{dfn}

The following was proven in \cite{BF14} in the case of real Lie groups, and in the general case for a slightly different notion of a boundary in \cite{BFS06}. A complete proof is given in \cite[Proposition 4.7]{GHL22}.

\begin{ex}
    If $G=G(k)$ is the rational points of a semisimple algebraic group over a local field $k$, the usual Weyl group is the generalized Weyl group for $B=G/P$ where $P$ is a minimal parabolic subgroup. Hence the notion indeed generalizes the Weyl group to a possibly non-linear setting.
\end{ex}

In the context of Weyl groups, Theorem \ref{theo: discretization of walk} can be restated as follows:

\begin{theo}\label{theo: strong bnd & Weyl gp of bdg lattice}
    If $X$ is a regular thick irreducible Euclidean building and $\Gamma$ is a lattice thereof, $(\Omega,\nu)$ is a strong boundary of $\Gamma$, and $W_\Sigma<W_{\Gamma,\Omega}$ is a subgroup of the generalized Weyl group.
\end{theo}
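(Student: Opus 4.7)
The plan is to prove the two assertions of the theorem separately. For the first assertion, that $(\Omega,\nu)$ is a strong boundary of $\Gamma$, I would simply combine Theorem \ref{theo: discretization of walk} with Example \ref{ex: BPF is strong boundary}. The former provides a symmetric probability measure $\mu$ on $\Gamma$ together with a $\Gamma$-equivariant isomorphism $B_{PF}(\Gamma,\mu)\simeq (\Omega,\nu)$. Since $\Gamma$ is a countable discrete group, Haar measure on $\Gamma$ is the counting measure and every probability measure on $\Gamma$ is automatically spread-out. Hence the Bader--Furman result recalled in Example \ref{ex: BPF is strong boundary} applies verbatim, and the conclusion is immediate.

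For the second assertion, I would construct the embedding $W_\Sigma \hookrightarrow W_{\Gamma,\Omega}$ from the framework already built up. By Corollary \ref{cor:opposées de mesure pleine}, $\Omega^{2,\op}$ carries full $\nu\otimes\nu$-measure, so it suffices to define the action on $\Omega^{2,\op}$. Under the identification $\mathscr F\simeq \Omega^{2,\op}\times\Sigma$, the spherical Weyl group $W_\Sigma$ stabilizes the slice $\Omega^{2,\op}\times\{0\}$ (since it fixes the origin of $\Sigma$), so its precomposition action on $\mathscr F$ restricts to an action on $\Omega^{2,\op}$. Measure-class preservation then follows from Corollary \ref{Weyl preserve la mesure}: the measure $m$ is $W_\Sigma$-invariant, and lies in the same class as $\nu\otimes\nu|_{\Omega^{2,\op}}$ via the everywhere-positive Radon--Nikodym density $\chi(-\beta_x)$.

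Equivariance with respect to $\Gamma$ is essentially tautological: $\Gamma$ acts on $\mathscr F=\Isom(\Sigma,X)$ by post-composition, while $W_\Sigma$ acts by pre-composition, and these two actions commute automatically; descending along the identification with $\Omega^{2,\op}$ yields the desired commutation in $W_{\Gamma,\Omega}$. Faithfulness is then straightforward: for any $w\in W_\Sigma$ with $w\ne 1$, $w$ moves the fundamental chamber $\partial\Lambda$ inside $\partial\Sigma$, which forces $w\cdot (\omega,\omega')\ne (\omega,\omega')$ for every $(\omega,\omega')\in\Omega^{2,\op}$; hence $W_\Sigma$ injects into $W_{\Gamma,\Omega}$ rather than merely mapping to it.

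The substantive content has already been accomplished in the preparatory sections: Theorem \ref{theo: discretization of walk} handles the probabilistic identification, and Corollary \ref{Weyl preserve la mesure} (built on the prouniform machinery of Section \ref{ssec: Prouniform measures}) provides the crucial Weyl-invariance. Consequently I do not anticipate any substantial obstacle in carrying out this sketch; the only minor point requiring a line of justification is the absolute continuity between $m$ and $\nu\otimes\nu$ that lets us transfer $W_\Sigma$-invariance of $m$ into measure-class preservation for $\nu\otimes\nu$.
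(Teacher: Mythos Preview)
Your proposal is correct and follows essentially the same route as the paper: the paper also invokes Theorem \ref{theo: discretization of walk} together with Example \ref{ex: BPF is strong boundary} for the strong boundary assertion, and Corollary \ref{Weyl preserve la mesure} together with the commutation of pre- and post-composition on $\mathscr F$ for the Weyl group assertion. Your write-up is simply more explicit about the absolute continuity between $m$ and $\nu\otimes\nu$ and about faithfulness of the $W_\Sigma$-action, points the paper leaves implicit.
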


\begin{proof}
    By Theorem \ref{theo: discretization of walk}, the space $(\Omega,\nu_x)$ of chambers at infinity with the harmonic measures is a Poisson boundary of $\Gamma$, and by Example \ref{ex: BPF is strong boundary}, this implies that $(\Omega,\nu)$ is a strong boundary of $\Gamma$. Moreover, we proved in Corollary \ref{Weyl preserve la mesure} that the spherical Weyl group $W_\Sigma$ of $\Sigma$ acts on $\Omega\times\Omega$ in a measure-class preserving way, and the actions of $W_\Sigma$ and $\Aut(X)$ clearly commute, so that $W_\Sigma$ is indeed a subgroup of the generalized Weyl group for the strong boundary $(\Omega,\nu)$ of $\Gamma$.
\end{proof}

\subsection{Prouniform measures on the boundary of a building}

We now set out to determine which subgroups of $W_\Sigma$ are special. In the process, we find ourselves working with sets of measure $0$ (the subsets of $\Omega^2$ with $\delta(\omega,\omega')=w$), which is a problem, given that we are working with maps define a.e. In order to circumvent this, we introduce new prouniform measures on $\Omega$ and $\Omega^2$.

\subsubsection{The everywhere case}
We start by motivating the introduction of these objects by providing a proof in the case where ``almost everywhere" is replaced with ``everywhere".

\begin{pro}\label{pro: special subgroups, everywhere case}
    Let $\pi:\Omega\to\{0,1\}$ be any map and let $\delta:\Omega\times\Omega\to W$ be the Weyl distance. Let $E\subset W$ be the set of all $w\in W$ such that for all $\omega,\omega'\in \Omega$ with $\delta(\omega,\omega')=w$, we have $\pi(\omega)=\pi(\omega')$.

    If $J\subset\llbracket1,n\rrbracket$ is minimal such that $E\subset W_J$, then $\pi$ is constant on $J$-residues.
\end{pro}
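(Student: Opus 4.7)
The plan is to recast the conclusion as $W_J \subseteq E$: two chambers $\omega,\omega'$ lie in a common $J$-residue exactly when $\delta(\omega,\omega')\in W_J$, so $\pi$ being constant on $J$-residues is equivalent to $W_J\subseteq E$. Combined with the hypothesis $E\subseteq W_J$ the goal becomes $E=W_J$. Note that $J$ is uniquely determined by the standard identity $W_{K_1}\cap W_{K_2}=W_{K_1\cap K_2}$. I would then introduce the auxiliary set $J_0:=E\cap S$; since $E\subseteq W_J$ and $W_J\cap S=J$, one has $J_0\subseteq J$, and it will suffice to show $E=W_{J_0}$, after which the minimality of $J$ forces $J=J_0$ and hence $E=W_J$.

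The inclusion $W_{J_0}\subseteq E$ goes by induction on $\ell(w)$: for $w\in W_{J_0}$ of positive length, any left descent $s$ of $w$ necessarily lies in $J_0$, and writing $w=sw_1$ reduced, given any $\omega,\omega'$ with $\delta(\omega,\omega')=w$, the spherical building $\Omega$ of type $(W_0,S_0)$ contains a chamber $\omega_1$ with $\delta(\omega,\omega_1)=s$ and $\delta(\omega_1,\omega')=w_1$, so $\pi(\omega)=\pi(\omega_1)=\pi(\omega')$ by $s\in J_0\subseteq E$ and the induction hypothesis applied to $w_1$.

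The reverse inclusion $E\subseteq W_{J_0}$ is the substantive direction, proved by induction on $\ell(w)$ via the following subclaim, which I expect to be the main obstacle: \emph{if $w\in E$ has positive length and $s$ is a left descent of $w$, then $s\in E$}. Granting this, with $w=sw_1$ reduced and $s\in J_0$, an analogous building step shows $w_1\in E$: pick $\omega_0\ne\omega_1$ in the $s$-panel $P$ of $\omega_1$; since $\ell(sw_1)>\ell(w_1)$, $\omega_1=\proj_P(\omega_2)$, whence $\delta(\omega_0,\omega_2)=w$ and $\pi(\omega_1)=\pi(\omega_0)=\pi(\omega_2)$. Induction then places $w_1$ in $W_{J_0}$, hence $w\in W_{J_0}$.

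To prove the subclaim, fix $\omega_0\in\Omega$ with $s$-panel $P$, pick $\omega_2\in\Omega$ with $\delta(\omega_0,\omega_2)=w=sw_1$, and set $\omega_1:=\proj_P(\omega_2)$; since $\ell(sw_1)=1+\ell(w_1)$, $\omega_1\ne\omega_0$. For every $\omega'\in P\setminus\{\omega_1\}$, the projection formula gives $\delta(\omega',\omega_2)=s\cdot w_1=w$, so $\pi(\omega')=\pi(\omega_2)=\pi(\omega_0)$, meaning $\pi\equiv\pi(\omega_0)$ on $P\setminus\{\omega_1\}$. Now I would vary $\omega_2$: for any $\omega_1^*\in P\setminus\{\omega_0\}$, the existence in $\Omega$ of some $\omega_2^*$ with $\delta(\omega_1^*,\omega_2^*)=w_1$ yields $\proj_P(\omega_2^*)=\omega_1^*$ and $\delta(\omega_0,\omega_2^*)=w$, so likewise $\pi\equiv\pi(\omega_0)$ on $P\setminus\{\omega_1^*\}$. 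Thickness of $\Omega$, inherited from $X$, ensures $|P|\ge 3$, so two distinct choices $\omega_1^{(1)},\omega_1^{(2)}\in P\setminus\{\omega_0\}$ produce complements that together cover $P$ and each carry the constant value $\pi(\omega_0)$, forcing $\pi$ to be constant on $P$. Since $\omega_0$ was arbitrary, every $s$-adjacent pair agrees under $\pi$, i.e.~$s\in E$, completing the argument.
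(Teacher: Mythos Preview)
Your proof is correct and follows essentially the same approach as the paper. Your subclaim (together with the follow-up showing $w_1\in E$) is exactly the paper's key lemma: if $w\in E$ and $w=s_iw'$ is reduced, then $s_i\in E$ and $w'\in E$, proved via the same thickness argument on an $s_i$-panel. The only difference is organizational: you package the conclusion as $E=W_{J_0}$ with $J_0=E\cap S$ and then invoke minimality, whereas the paper applies the splitting lemma inductively to extract every $s_i$ appearing in a reduced word of some $w\in E$, and then uses minimality of $J$ directly to get $s_i\in E$ for all $i\in J$.
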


We start by proving the following lemma.

\begin{lem}\label{lem: splitting in everywhere case}
    Let $w\in E$, and let $i,w'$ be such that $w=s_iw'$ and $\ell(w)=\ell(w')+1$. Then $s_i\in E$ and $w'\in E$.
\end{lem}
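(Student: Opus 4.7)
The strategy is to use the minimal gallery structure of the spherical building $\Delta_\infty$ together with thickness (which $\Delta_\infty$ inherits from $X$), reducing everything to a simple combinatorial gallery argument.

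The key building-theoretic fact I will use is the following: if $w = s_i w'$ is reduced in $W_0$, and if $\omega, \omega'$ are chambers with $\delta(\omega, \omega') = w'$, then for any chamber $\omega''$ in the $s_i$-residue of $\omega$ with $\omega'' \ne \omega$, one has $\delta(\omega'', \omega') = w$. This is immediate from the gallery characterization of $W$-distance: prepending $\omega''$ to a minimal gallery of type $w'$ from $\omega$ to $\omega'$ produces a gallery of type $(s_i, w')$, which is minimal since $s_i w'$ is reduced, so its $W$-distance is $s_i w' = w$.

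Step 1 is to show $s_i \in E$. Take any $\omega_1, \omega_2 \in \Omega$ with $\delta(\omega_1, \omega_2) = s_i$. By thickness of $\Delta_\infty$, the $s_i$-residue containing $\omega_1$ and $\omega_2$ contains a third chamber $\omega_3$ distinct from both. Using the existence of chambers at arbitrary $W$-distance, pick any $\omega_4$ with $\delta(\omega_3, \omega_4) = w'$. Applying the key fact with the pairs $(\omega_1, \omega_3)$ and $(\omega_2, \omega_3)$, I get $\delta(\omega_1, \omega_4) = \delta(\omega_2, \omega_4) = w$. Since $w \in E$, this forces $\pi(\omega_1) = \pi(\omega_4) = \pi(\omega_2)$, hence $s_i \in E$.

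Step 2 is to show $w' \in E$. Take $\omega_3, \omega_4 \in \Omega$ with $\delta(\omega_3, \omega_4) = w'$. By thickness, there exists $\omega_1 \ne \omega_3$ in the $s_i$-residue of $\omega_3$. The key fact gives $\delta(\omega_1, \omega_4) = w$, so $\pi(\omega_1) = \pi(\omega_4)$ because $w \in E$. On the other hand, $\delta(\omega_1, \omega_3) = s_i$, and Step 1 gives $s_i \in E$, so $\pi(\omega_1) = \pi(\omega_3)$. Combining, $\pi(\omega_3) = \pi(\omega_4)$, as required.

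I do not anticipate any real obstacle here: the entire argument rests on the minimal gallery description of $\delta$ and on thickness of $\Delta_\infty$, both of which are standard. The only subtlety worth checking is that the spherical building $\Delta_\infty$ is genuinely thick (so that a third chamber $\omega_3$ can always be produced), which follows from the standing assumption that $X$ is an irreducible thick regular Euclidean building.
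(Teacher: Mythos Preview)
Your proof is correct and follows essentially the same approach as the paper's: both arguments use thickness of $\Delta_\infty$ to produce a third chamber in the $s_i$-residue, the standard gallery fact that prepending an $s_i$-step to a minimal gallery of type $w'$ yields $\delta = s_iw' = w$, and then transitivity of $\pi$-equality through $w\in E$. The only cosmetic difference is the labeling of which chamber plays the role of the ``base'' in the residue; the logic is identical.
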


\begin{proof}
    Let $\omega\in\Omega$, and let $\mc R$ be the residue of type $i$ containing it. Let $\omega_1,\omega_2\in\mc R$ be different from $\omega$. Let $\omega'$ be such that $\delta(\omega,\omega')=w'$, then $\delta(\omega_1,\omega')=\delta(\omega_2,\omega')=w$, so that 
    $$\pi(\omega_1)=\pi(\omega')=\pi(\omega_2).$$
    Swapping the roles of $\omega_1$ and $\omega$, we obtain through the same reasoning (but a different $\omega'$) that $\pi(\omega)=\pi(\omega_2)$ as well.
    Our reasoning holds for any $\omega_{1,2}\in\mc R$, so that $\pi$ is constant on $i$-residues, which proves that $s_i\in E$.

    We have just proven that $\pi(\omega)=\pi(\omega_1)$, and since $\pi(\omega')=\pi(\omega_1)$, we also have $\pi(\omega)=\pi(\omega')$. This of course holds for every $\omega'$ such that $\delta(\omega,\omega')=w'$, and $w'\in E$ as well.
\end{proof}

\begin{proof}[Proof of Proposition \ref{pro: special subgroups, everywhere case}]
    Applying the previous lemma inductively proves that if $w=s_{i_1}\cdots s_{i_k}$ is a reduced expression, then $s_{i_j}\in E$ for all $j\le k$. If for some $i\in J$ there were no $w\in E$ for which a reduced expression contained $s_i$, we would have $E\subset W_{J\setminus \{i\}}$. Hence $s_i\in E$ for all $i\in J$, thus $\pi$ is constant on all residues of type $J$.
\end{proof}

\subsubsection{The almost everywhere case}

In order to repeat this argument in the measurable setting, we will need a measure on the set of chambers at distance $w$ from each other, which is $\nu_o$-negligible for $w\ne w_0$. This is done with the help of prouniform measures (see Section \ref{ssec: Prouniform measures}).\medskip

Let $w$ be an element of the spherical Weyl group, $\Lambda$ be the fundamental sector of $\Sigma$, and $\Lambda^w$ be the convex hull of $\Lambda$ and $w\Lambda$ in $\Sigma$.
Note that an embedding $\phi:\Lambda^w\to X$ is entirely determined by $\phi(0)$, $\partial_\infty\phi(\Lambda)\in\Omega$ and $\partial_\infty\phi(w\Lambda)$. This allows us to construct the embedding 

\begin{eqnarray*}
    i_w:\Isom(\Lambda^w,X)^o&\hookrightarrow& \Omega\times\Omega\\
    \phi&\mapsto &(\partial_\infty\phi(\Lambda), \partial_\infty\phi(w\Lambda)).
\end{eqnarray*}

For $\omega\in\Omega,$ we may define $\mu_{\omega,o}^w$ to be the restricted prouniform measure on the subset of isometries of $\Isom(\Lambda^w,X)^o$ which send $\Lambda$ to $\mc S(o,\omega)$: it is in a sense the uniform measure on the set $\Omega_{\omega,o}^w$ of chambers at distance $w$ from $\omega$, with the added condition that $\omega$ and $\omega'$ must be contained in an apartment which also contains $o$.

Under the embedding we just defined, 
$$(i_w)_*\mu_{\omega,o}^w= \delta_\omega\times\nu_{\omega,o}^w$$
for some measure $\nu_{\omega,o}^w$ on $\Omega$. 

\begin{lem}\label{lem: nu_o est absorbante}
    For any $o\in X,$ and any $w\in W_\Sigma$, we have
    $$\int_\Omega\nu_{\omega,o}^w\d\nu_o(\omega)=\nu_o.$$
\end{lem}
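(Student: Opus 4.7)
The plan is to recognize the identity as an instance of the disintegration formula for prouniform measures (Lemma \ref{lem: desintegration des prouniformes}), applied to the chain of inclusions $\{0\}\hookrightarrow\Lambda\hookrightarrow\Lambda^w$ in $\Sigma$, with basepoint $o\in V_Q$ at the vertex $0$.

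Concretely, let $\alpha_0$ be the embedding $\{0\}\to X$ sending $0$ to $o$. Under the identification $\Isom(\Lambda,X)^o\simeq\Omega$, $\phi\mapsto\partial_\infty\phi(\Lambda)$, the measure $\mu_{\Lambda,\{0\}}^{\alpha_0}$ becomes the harmonic measure $\nu_o$ (by the example at the end of Section~\ref{ssec: Prouniform measures}), and the fiber measure $\mu_{\Lambda^w,\Lambda}^{\alpha_1}$ is, by definition, the measure $\mu_{\omega,o}^w$ when $\alpha_1$ corresponds to $\omega\in\Omega$. Lemma \ref{lem: desintegration des prouniformes} thus yields
$$\mu_{\Lambda^w,\{0\}}^{\alpha_0}=\int_\Omega\mu_{\omega,o}^w\,d\nu_o(\omega).$$
Pushing forward by $i_w:\Isom(\Lambda^w,X)^o\hookrightarrow\Omega^2$ and then projecting to the second coordinate replaces $\mu_{\omega,o}^w$ by $\nu_{\omega,o}^w$, so the right-hand side becomes precisely $\int_\Omega\nu_{\omega,o}^w\,d\nu_o(\omega)$.

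It remains to check that the same push-forward applied to the left-hand side gives $\nu_o$. The map $\pr_2\circ i_w$ factors as $\iota\circ j^*$, where $j:w\Lambda\hookrightarrow\Lambda^w$ is the inclusion and $\iota:\Isom(w\Lambda,X)^o\to\Omega$ is the identification $\psi\mapsto\partial_\infty\psi(w\Lambda)$. By Corollary \ref{cor: i star star}, $(j^*)_*\mu_{\Lambda^w,\{0\}}^{\alpha_0}=\mu_{w\Lambda,\{0\}}^{\alpha_0}$; it is then enough to identify $\iota_*\mu_{w\Lambda}^o$ with $\nu_o$.

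The main (and essentially only) subtle point is this last identification, and the key observation is type-preservation: since $w$ lies in $W_0$, which is a subgroup of the type-preserving affine Weyl group $W$ and fixes $0$, the restriction $w|_\Lambda:\Lambda\to w\Lambda$ is a type-preserving simplicial isometry. Precomposition with $w|_\Lambda$ therefore gives a bijection $\Isom(w\Lambda,X)^o\to\Isom(\Lambda,X)^o$ which is measure-preserving (prouniform measures depend only on the typed simplicial structure) and compatible with the two $\Omega$-identifications. Hence $\iota_*\mu_{w\Lambda}^o=\nu_o$, and we are done.
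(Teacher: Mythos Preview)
Your proof is correct and follows essentially the same approach as the paper: apply the disintegration lemma for prouniform measures to the chain $\{0\}\hookrightarrow\Lambda\hookrightarrow\Lambda^w$, then push forward by $\pr_2\circ i_w$ and use Corollary~\ref{cor: i star star}. The only cosmetic difference is that the paper packages your two-step factorization (inclusion $w\Lambda\hookrightarrow\Lambda^w$ followed by precomposition by $w|_\Lambda$) into a single type-preserving embedding $j:\Lambda\hookrightarrow\Lambda^w$, $\lambda\mapsto w\lambda$, and applies Corollary~\ref{cor: i star star} directly to conclude $(j^*)_*\mu_o^w=\nu_o$; your explicit discussion of why $w|_\Lambda$ is type-preserving is precisely what justifies that this $j$ is an admissible embedding in the prouniform framework.
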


\begin{proof}
    By Lemma \ref{lem: desintegration des prouniformes}, we have 
    $$\int_\Omega\mu_{\omega,o}^w\d\nu_o(\omega)=\mu_o^w$$
    where $\mu_o^w$ is the restricted prouniform measure on $\Isom(\Lambda^w,X)^o$. Now since $\nu_{\omega,o}^w=(\mathrm{pr}_2\circ i_w)_*\mu_{\omega,o}^w$ and $(\mathrm{pr}_2\circ i_w)_*$ is linear and continuous ($\Omega\times \Omega$ is compact), we need only show that 
    $$(\mathrm{pr}_2\circ i_w)_*\mu_o^w=\nu_o.$$
    But if we now consider the embedding $j:\Lambda\hookrightarrow\Lambda^w$ which sends $\Lambda$ to $w\Lambda$, we have $j^*=\mathrm{pr}_2\circ i_w$, so that the equality $(\mathrm{pr}_2\circ i_w)_*\mu_o^w=\nu_o$ now follows from Corollary \ref{cor: i star star}.
\end{proof}

\begin{lem}\label{lem: desintegration des nu_o}
    Let $w,w'\in W$ be such that $\ell(ww')=\ell(w)+\ell(w')$. Then for any $o\in X$ and any $\omega\in\Omega$,
    $$\nu_{\omega,o}^{ww'}=\int_{\Omega}\nu_{\omega',o}^{w'}\d \nu_{\omega,o}^w(\omega')$$
\end{lem}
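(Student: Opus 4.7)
The plan is to adapt Lemma~\ref{lem: nu_o est absorbante} to the relative setting, combining the disintegration formula of Lemma~\ref{lem: desintegration des prouniformes} with a geometric decomposition of $\Lambda^{ww'}$ permitted by the length hypothesis.

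Concretely, let $\alpha_0:\Lambda\hookrightarrow X$ be the unique type-preserving isometric embedding with image $\mc S(o,\omega)$; by construction, $\mu_{\Lambda^w,\Lambda}^{\alpha_0}=\mu_{\omega,o}^w$ and $\mu_{\Lambda^{ww'},\Lambda}^{\alpha_0}=\mu_{\omega,o}^{ww'}$. The assumption $\ell(ww')=\ell(w)+\ell(w')$ ensures that the sector $w\Lambda$ lies on a minimal gallery from $\Lambda$ to $ww'\Lambda$ in $\Sigma$, so one has a chain of convex ind-$P$ subcomplexes $\Lambda\subset\Lambda^w\subset\Lambda^{ww'}$. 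Applying Lemma~\ref{lem: desintegration des prouniformes} to this chain yields
$$\mu_{\omega,o}^{ww'}=\int_{\Isom(\Lambda^w,X)}\mu_{\Lambda^{ww'},\Lambda^w}^{\alpha_1}\,d\mu_{\omega,o}^w(\alpha_1).$$

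I then push both sides forward under the Borel map $\phi\mapsto\partial_\infty\phi(ww'\Lambda)\in\Omega$. The left side becomes $\nu_{\omega,o}^{ww'}$ by definition, and reindexing $\alpha_1$ by $\omega':=\partial_\infty\alpha_1(w\Lambda)$ turns the outer integral on the right into an integral against $\nu_{\omega,o}^w$, since the pushforward of $\mu_{\omega,o}^w$ under $\alpha_1\mapsto\partial_\infty\alpha_1(w\Lambda)$ is precisely $\nu_{\omega,o}^w$. The lemma therefore reduces to proving, for $\mu_{\omega,o}^w$-almost every $\alpha_1$, the identity
$$\bigl(\phi\mapsto\partial_\infty\phi(ww'\Lambda)\bigr)_*\mu_{\Lambda^{ww'},\Lambda^w}^{\alpha_1}=\nu_{\omega',o}^{w'},\qquad \text{where } \omega'=\partial_\infty\alpha_1(w\Lambda).$$

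This last identity is the crux of the argument, and I expect it to be the main obstacle. The geometric input, which should come directly from the length hypothesis, is that $\Lambda^{ww'}$ is a pushout $\Lambda^w\cup_{w\Lambda}(w\cdot\Lambda^{w'})$, with both pieces convex in $\Lambda^{ww'}$. Granted this, extending $\alpha_1:\Lambda^w\to X$ to $\Lambda^{ww'}$ is equivalent to extending its restriction $\alpha_1|_{w\Lambda}$ from $w\Lambda$ to $w\cdot\Lambda^{w'}$; precomposing by the simplicial isomorphism $w:\Lambda^{w'}\xrightarrow{\sim}w\cdot\Lambda^{w'}$ further identifies this with extending an isometric embedding of $\Lambda$ onto $\mc S(o,\omega')$ to $\Lambda^{w'}$. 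This chain of identifications is compatible with the finite approximations defining prouniform measures, so it sends $\mu_{\Lambda^{ww'},\Lambda^w}^{\alpha_1}$ to $\mu_{\omega',o}^{w'}$, whose pushforward under the direction map is $\nu_{\omega',o}^{w'}$ by definition. Substituting into the disintegration formula completes the proof.
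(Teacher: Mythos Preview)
Your proposal is correct and follows essentially the same route as the paper: apply Lemma~\ref{lem: desintegration des prouniformes} to the chain $\Lambda\subset\Lambda^w\subset\Lambda^{ww'}$, then use the length hypothesis to identify extensions of $\alpha_1$ to $\Lambda^{ww'}$ with embeddings of $w\Lambda^{w'}$ extending $\alpha_1|_{w\Lambda}$, and finally precompose by $w$ to land in $\Isom(\Lambda^{w'},X)$ based at $\mc S(o,\omega')$. The paper phrases the geometric decomposition as $\Conv(w\Lambda,ww'\Lambda)=w\Lambda^{w'}$ rather than as a pushout, and writes out the isomorphism $\Phi=w^*\circ\Phi_1$ and the compatibility $\pr_2\circ i_{ww'}=\pr_2\circ i_{w'}\circ\Phi$ explicitly, but the content is the same.
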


\begin{proof}
    This formula looks a lot like Lemma \ref{lem: desintegration des prouniformes}, with a couple changes. In this context, since $\Lambda^w\subset \Lambda^{ww'}$, Lemma \ref{lem: desintegration des prouniformes} reads as follows:
    $$\mu_{\omega,o}^{ww'}=\int_{\Isom(\Lambda^w,X)^o}\mu^\phi\d\mu_{\omega,o}^w(\phi),$$
    where $\mu^\phi$ is the prouniform measure on $\Isom(\Lambda^{ww'},X)^\phi$ restricted to the embeddings $\phi'$ coinciding with $\phi$ on $\Lambda^w$. 
    
    Then such an embedding $\phi'$ is determined by its restriction to $\Conv(w\Lambda,ww'\Lambda)=w\Lambda^{w'}$, and by the assumption $\ell(ww')=\ell(w)+\ell(w')$, any embedding of $w\Lambda^{w'}$ into $X$ coinciding with $\phi$ on $w\Lambda$ extends $\phi$ to some $\phi'\in\Isom(\Lambda^{ww'},X)$.
    
    Hence we have a measure-preserving isomorphism $\Phi_1$ between the set $\Isom(\Lambda^{ww'},X)^{\phi}$ of embeddings coinciding with $\phi$ on $\Lambda^w$, and the set of embeddings $\Isom(w\Lambda^{w'},X)^{\phi|_{w\Lambda}}$ coinciding with $\phi$ on $w\Lambda$. 
    
    Now note that the precomposition by $w$ also induces an isomorphism $w^*$ between $\Isom(w\Lambda^{w'},X)^{\phi|_{w\Lambda}}$ and $\Isom(\Lambda^{w'},X)^{(\phi\circ w)|_\Lambda}$. We let $\Phi:\Isom(\Lambda^{ww'},X)^\phi\to \Isom(\Lambda^{w'},X)^{(\phi\circ w)|_\Lambda}$ be defined by $\Phi:=w^*\circ\Phi_1$.
    
    Note also that if $i_w(\phi)=(\omega,\omega')$, we have $\Phi_*\mu^\phi=\mu_{\omega',o}^{w'}$, and if additionally $i_{ww'}(\phi')=(\omega,\omega'')$, then $i_{w'}(\Phi(\phi'))=(\omega',\omega'')$, so that 
    $$\pr_2\circ i_{ww'}=\pr_2\circ i_{w'}\circ\Phi.$$
    From this we obtain
    \begin{eqnarray*}
        \nu_{\omega,o}^{ww'}&=&(\pr_2\circ i_{ww'})_*\mu_{\omega,o}^{ww'}\\
        &=&(\pr_2\circ i_{w'})_*\int_{\Isom(\Lambda^w,X)^o}\Phi_*\mu^\phi\d\mu_{\omega,o}^w(\phi)\\
        &=&(\pr_2\circ i_{w'})_*\int_{\Isom(\Lambda^w,X)^o} \mu_{\pr_2(i_w(\phi)),o}^{w'}\d\mu_{\omega,o}^w(\phi)\\
        &\overset{(*)}{=}&(\pr_2\circ i_{w'})_*\int_{\Omega}\mu_{\omega',o}^{w'}\d \nu_{\omega,o}^w(\omega')\\
        &=&\int_{\Omega}\nu_{\omega',o}^{w'}\d \nu_{\omega,o}^w(\omega'),
    \end{eqnarray*}
    where $(*)$ is a change of variable $\omega'=\pr_2\circ i_w(\phi)$.
\end{proof}

If $u$ is the facet of cotype $i$ of $\omega\in\Omega$, we may view $\Res(u)$ as the set of $\omega'\in\Omega$ which are at distance $s_i$ from $\omega$. Doing this we are forgetting $\omega$ itself, which is in $\Res(u)$ but is not at distance $s_i$ from itself. However this exception is a single point, which is $(\proj_u)_*\nu_o$-negligible. In fact, we may generalize the projections $\proj_u$ to any $w\in W$, allowing us to have another description of $\nu_{\omega,o}^w$.\medskip

We define the map $\proj_\omega^w$ ($\nu_o$-almost everywhere) in the following way: for any $\omega'\in\Omega_{\op}(\omega)$, there is a unique chamber $\omega''$ in the apartment defined by $(\omega,\omega')$ such that 
$$\delta(\omega,\omega'')=w,$$
and we let $\proj_\omega^w(\omega'):=\omega''.$

Note that when $w=s_i$ and $u$ is the facet of cotype $i$ of $\omega$, the maps $\proj_u$ and $\proj_\omega^{s_i}$ do indeed coincide ($\nu_o$-almost everywhere).

\begin{lem}\label{lem: nuw est pushforward de nu}
    For any $o\in V_Q$, $w\in W$ and $\omega\in\Omega$, we have 
    $$\nu_{\omega,o}^w=(\proj_{\omega}^w)_*(\nu_o|_{\Omega_o^{\op}(\omega)})$$
    where $\Omega_o^{\op}(\omega)$ is the set of $\omega'\in\Omega^{\op}(\omega)$ such that the apartment containing $\omega$ and $\omega'$ also contains $o$, and $\mu|_A$ denotes the conditional measure, defined for a measurable $A$ such that $\mu(A)>0$ by 
    $$\mu|_A(B)=\frac{\mu(B\cap A)}{\mu(A)}.$$
\end{lem}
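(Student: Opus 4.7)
The plan is to reduce the general case to the case $w = w_0$ (the longest element of $W_0$) and then handle that base case directly using the prouniform machinery.

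For the inductive reduction, I would pick $w' := w^{-1} w_0$, so that $ww' = w_0$ with $\ell(ww') = \ell(w) + \ell(w')$ (using that $w_0$ is the longest element). Applying Lemma \ref{lem: desintegration des nu_o} gives
$$\nu_{\omega,o}^{w_0} = \int_\Omega \nu_{\omega'',o}^{w'}\,d\nu_{\omega,o}^w(\omega'').$$
For any $\omega'$ in the support of $\nu_{\omega'', o}^{w'}$, we have $\delta(\omega'', \omega') = w'$ in an apartment containing $o$ together with $\delta(\omega, \omega'') = w$, so $\delta(\omega, \omega') = ww' = w_0$; moreover $\omega''$ lies in the (unique) apartment through $(\omega, \omega')$, so $\proj_\omega^w(\omega') = \omega''$ by definition. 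Hence $(\proj_\omega^w)_* \nu_{\omega'', o}^{w'} = \delta_{\omega''}$, and pushing the displayed identity forward by $\proj_\omega^w$ yields
$$(\proj_\omega^w)_* \nu_{\omega, o}^{w_0} = \int_\Omega \delta_{\omega''}\,d\nu_{\omega, o}^w(\omega'') = \nu_{\omega, o}^w.$$
Noting that $\proj_\omega^{w_0}$ acts as the identity on $\Omega_o^\op(\omega)$, it will thus suffice to verify the base case $\nu_{\omega, o}^{w_0} = \nu_o|_{\Omega_o^\op(\omega)}$.

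For the base case, two geometric observations will be key. First, $\Lambda^{w_0}$ is all of $\Sigma$: in a Euclidean apartment the convex hull of two opposite sectors $\Lambda$ and $w_0\Lambda$ already fills the whole ambient space, so $\Isom(\Lambda^{w_0}, X) = \mathscr F$ and $\mu_{\omega, o}^{w_0}$ is the restricted prouniform measure on frames $\phi$ with $\phi(0) = o$ and $\phi(\Lambda) = \mc S(o, \omega)$. Second, for $(\omega, \omega') \in \Omega_o^{2,\op}$ one has $\beta_o(\omega, \omega') = 0$: since $o$ lies in the apartment through $\omega, \omega'$, we may take $z = o$ in the definition of $\beta_o$, and then $h(o, o; \omega) + h(o, o; \omega') = 0$ by the cocycle property. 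Consequently $m$ and $\nu_o \otimes \nu_o$ agree on $\Omega_o^{2,\op}$.

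Combining these observations with the identification $\mu_\mathscr F = m \otimes \lambda$, the pushforward $(i_{w_0})_* \mu_\mathscr F^o$ is a probability measure on $\Omega_o^{2,\op}$ proportional to $\nu_o \otimes \nu_o|_{\Omega_o^{2,\op}}$. I would then disintegrate this probability along its first marginal, which equals $\nu_o$. On one side this yields, by the prouniform disintegration $\mu_\mathscr F^o = \int \mu_{\omega, o}^{w_0}\,d\nu_o(\omega)$ of Lemma \ref{lem: desintegration des prouniformes}, the family $\nu_{\omega, o}^{w_0}$; on the other side it yields the conditional $\nu_o|_{\Omega_o^\op(\omega)}$ of the normalized $\nu_o \otimes \nu_o|_{\Omega_o^{2,\op}}$. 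By uniqueness of disintegration, these two families agree, establishing the base case.

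The hard part will be making the identification $\mathscr F \simeq \Omega^{2,\op} \times \Sigma$ precise enough to conclude that $(i_{w_0})_* \mu_\mathscr F^o$ really has the claimed proportionality, with the correct normalization making it a probability. This requires tracking how the condition $\phi(0) = o$ pins down the $\Sigma$-factor to a single point for each apartment through $o$, and appealing to $\Aut(X)$-invariance (together with the spherical-building fact that $\nu_o(\Omega_o^\op(\omega))$ is independent of $\omega$) to verify that the first marginal of the normalized measure is indeed $\nu_o$ itself.
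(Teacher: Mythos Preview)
Your overall architecture matches the paper's: first establish the case $w=w_0$, then reduce the general case to it via $(\proj_\omega^w)_*\nu_{\omega,o}^{w_0}=\nu_{\omega,o}^w$. However, both steps are carried out differently.

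For the reduction, you use Lemma~\ref{lem: desintegration des nu_o} together with the building-theoretic fact that if $\delta(\omega,\omega'')=w$, $\delta(\omega'',\omega')=w'$ and $\ell(ww')=\ell(w)+\ell(w')$, then $\omega''$ lies in the apartment determined by $(\omega,\omega')$ (because any minimal gallery between opposite chambers is contained in that apartment). This is correct, but the paper instead observes that the inclusion $j:\Lambda^w\hookrightarrow\Sigma=\Lambda^{w_0}$ makes the diagram with $i_{w_0},i_w$ and $\id\times\proj_\omega^w$ commute, and then applies Corollary~\ref{cor: i star star} to get $(j^*)_*\mu_{\omega,o}^{w_0}=\mu_{\omega,o}^w$ directly. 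This stays entirely inside the prouniform formalism and avoids the extra combinatorial input.

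For the base case, the two approaches differ more substantially. The paper does not go through $\mu_\mathscr F=m\otimes\lambda$ at all. Instead it argues directly from the defining property of restricted prouniform measures: it suffices to check that $\nu_o(\Omega_o(y)\cap\Omega_o^{\op}(\omega))$ depends only on $\lambda=\sigma(o,y)$ whenever this intersection is nonempty. By Lemma~\ref{lem: opp ds link ->o ds apt}, the intersection is nonempty exactly when $\proj_{\lk o}(y)$ is opposite to $\proj_{\lk o}(\omega)$, and in that case $\Omega_o(y)\subset\Omega_o^{\op}(\omega)$, so the measure is simply $1/N_\lambda$. This is a two-line combinatorial check. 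Your route via $\beta_o\equiv 0$ on $\Omega_o^{2,\op}$ and disintegration of $(i_{w_0})_*\mu_\mathscr F^o$ can be made to work, but as you yourself note, the ``hard part'' is pinning down how the slice $\{\phi(0)=o\}$ sits inside the identification $\mathscr F\simeq\Omega^{2,\op}\times\Sigma$ and tracking the normalization; the paper sidesteps this entirely.
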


\begin{proof}
    We first prove the result in the case $w=w_0$. In that case, $\proj_\omega^{w_0}\equiv \id$ $\nu_o$-a.e., so that we are left to prove that $\nu_{\omega,o}^{w_0}=\nu_o|_{\Omega_o^{\op}(\omega)}$.
    
    For this, by definition of the prouniform measure, it is enough to prove that for $\lambda\in P^+$ fixed and $y\in V_\lambda(o)$, the quantity
    $$\nu_o(\Omega_o(y)\cap \Omega_o^\op(\omega))$$
    does not depend on $y$, only on $\lambda$, as long as 
    \begin{equation}
        \Omega_o(y)\cap \Omega_o^\op(\omega)\ne\emptyset.\label{eqn: shadow of y opposite to omega}
    \end{equation}
    But now, notice that (\ref{eqn: shadow of y opposite to omega}) happens if and only if $\proj_{\lk_X(o)}(y)$ is opposite to $\proj_{\lk_X(o)}(\omega)$, in which case $\Omega_o(y)\subset \Omega_o^\op(\omega)$ by Lemma \ref{lem: opp ds link ->o ds apt}, so that $\nu_o(\Omega_o(y)\cap \Omega_o^\op(\omega))=\nu_o(\Omega_o(y))=\frac1{N_\lambda}$, which does not depend on $y\in V_\lambda(o)$.\medskip

    Now for a general $w$, consider the identity embedding $j:\Lambda^w\hookrightarrow\Sigma$, and notice that the following diagram commutes:
    \[\begin{tikzcd}
	\Isom(\Sigma,X)^o && \Isom(\Lambda^w,X)^o \\
	\\
	\Omega\times\Omega && \Omega\times\Omega\\
    \\
    \Omega&&\Omega
	\arrow["j^*"', from=1-1, to=1-3]
	\arrow["i_{w_0}", hook', from=1-1, to=3-1]
	\arrow["i_w", hook', from=1-3, to=3-3]
	\arrow["\id\times \proj_\omega^w"', from=3-1, to=3-3]
    \arrow["\pr_2", from=3-1, to=5-1]
    \arrow["\pr_2", from=3-3, to=5-3]
    \arrow["\proj_\omega^w"', from=5-1, to=5-3]
\end{tikzcd}\]
    In particular, since Corollary \ref{cor: i star star} gives us that 
    $$(j^*)_*\mu_{\omega,o}^{w_0}=\mu_{\omega,o}^w,$$
    we get
    $$(\proj_\omega^w)_*\nu_{\omega,o}^{w_0}=\nu_{\omega,o}^w,$$
    and we are done.
\end{proof}

At some point, we will want to relate the measure $\nu_{\omega,o}^w$ with the measure $(\proj_\omega^w)_*\nu_o$ itself. Although this is unclear in general, there are special values of $w$ for which it is feasible, which will be enough for us.

We now study further the measures on residues and on the set of residues, which will be useful in establishing Proposition \ref{pro: the special subgroups are the special subgroups} by relating them to the $\nu_{\omega,o}^w$.

Let $J\subset\llbracket1,n\rrbracket$, and let $\Omega_J$ be the space of residues of type $J$ in $\Omega$, with projection $\pi_J:\Omega\to\Omega_J$. Note that $\Omega_J$ can be identified with the set of simplices of cotype $J$ of $\partial X$, as any chamber of some fixed residue of type $J$ has the same face of cotype $J$. Under this identification, $\pi_J$ becomes the map which associates to some $\omega\in\Omega$ its face of cotype $J$.

Let also $\Lambda_J$ be the simplicial face of $\Lambda$ in $\Sigma$ associated to the face of \emph{cotype} $J$ of $\partial \Lambda$. Then any residue of type $J$ can be identified with a simplex of cotype $J$ in $\partial X$, which, again, can itself be identified with an isometry $\phi:\Lambda_J\to X$ with $\phi(0)=o$. 

We start by recording the following consequence of Corollary \ref{cor: i star star}:

\begin{lem}
    Let $\nu_J$ be the restricted prouniform measure on $\Isom(\Lambda_J,X)^o$. Then 
    $$\nu_J=(\pi_J)_*\nu_o.$$
\end{lem}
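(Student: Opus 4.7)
The plan is to use the identification between chambers at infinity and based isometric embeddings of the fundamental sector, together with the functoriality of the prouniform measures under restriction.

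First, I would unpack the two identifications involved. As noted in the example following the definition of prouniform measures, a chamber $\omega\in\Omega$ is in bijection with the isometry $\phi_\omega:\Lambda\to X$ sending $0$ to $o$ whose image is $\mc S(o,\omega)$, and under this bijection $\nu_o=\mu_\Lambda^o$. Analogously, a residue $\pi_J(\omega)\in\Omega_J$, viewed as the simplicial face of cotype $J$ at infinity of $\omega$, is in bijection with an isometry $\psi:\Lambda_J\to X$ with $\psi(0)=o$ whose image is the $(o,\pi_J(\omega))$-sector face. Moreover, these bijections are compatible with the inclusion $i:\Lambda_J\hookrightarrow\Lambda$: namely, the restriction $\phi_\omega\circ i$ is exactly the isometry corresponding to $\pi_J(\omega)$. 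In other words, under the two bijections above, the projection $\pi_J:\Omega\to\Omega_J$ becomes the restriction map
\[ i^*:\Isom(\Lambda,X)^o\to \Isom(\Lambda_J,X)^o. \]

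Once this is done, the statement follows directly from the functoriality of prouniform measures under restriction: Corollary~\ref{cor: i star star} (applied with $Y_0=\{o\}$, $Y_1=\Lambda_J$, $Y_2=\Lambda$ and $\alpha_0$ the embedding of $\{o\}$ as the vertex $0$) gives
\[ (i^*)_*\mu_\Lambda^o=\mu_{\Lambda_J}^o=\nu_J. \]
Translating through the identifications above, this reads $(\pi_J)_*\nu_o=\nu_J$, which is the desired equality.

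The only point requiring a bit of care is verifying that $\Lambda_J$ genuinely has property $P$ (it is convex and contains $0\in V_Q$, so this is immediate) and that the two bijections $\Omega\simeq\Isom(\Lambda,X)^o$ and $\Omega_J\simeq\Isom(\Lambda_J,X)^o$ are measurable and intertwine $\pi_J$ with $i^*$. Neither is a serious obstacle: the first is recalled as an example in the paper, and the second amounts to the observation that a sector and its cotype-$J$ face share their vertex at $o$ and determine each other's boundary behavior simplicially. Once the identifications are in place, no further computation is needed.
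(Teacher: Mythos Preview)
Your proof is correct and follows essentially the same route as the paper: both arguments reduce to applying Corollary~\ref{cor: i star star} once one has checked that, under the identifications $\Omega\simeq\Isom(\Lambda,X)^o$ and $\Omega_J\simeq\Isom(\Lambda_J,X)^o$, the projection $\pi_J$ coincides with the restriction map $i^*$ induced by the inclusion $\Lambda_J\hookrightarrow\Lambda$. You are slightly more explicit about the identifications and about $\Lambda_J$ having property $P$, but the substance is identical.
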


\begin{proof}
    In order to apply Corollary \ref{cor: i star star}, we need only prove that if $i_J:\Lambda_J\to\Lambda$ is the inclusion, then $\pi_J=(i_J)^*$. This comes from the fact that we only consider type-preserving isometries $\phi:\Lambda\to X$, so $\phi(\Lambda_J)$ must be a face of $\partial \phi(\Lambda)$ of the same type as $\partial \Lambda_J$, i.e. of cotype $J$, as required.
\end{proof}

\begin{lem}
    Let $\mc R$ be a residue of type $J$, $u$ the associated face of cotype $J$ and $\psi\in\Isom(\Lambda_J,X)^o$ be the isometry centred at $o$ with $\partial\psi(\Lambda_J)=u$. Then the restricted prouniform measure on $\Isom(\Lambda,X)^\psi$ is $(\proj_\mc R)_*\nu_o$.
\end{lem}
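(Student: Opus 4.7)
The plan follows the pattern of Lemma~\ref{lem: nuw est pushforward de nu}: realize the identity via a disintegration of $\nu_o$ along restriction to $\Lambda_J$, then invoke Corollary~\ref{cor: i star star} fiberwise. First, identify $\mc R$ with $\Isom(\Lambda, X)^\psi$ via $\phi \mapsto \partial\phi(\Lambda)$: the constraint $\phi|_{\Lambda_J}=\psi$ forces $\partial\phi(\Lambda)$ to have $u$ as its cotype-$J$ face, and conversely each $\omega' \in \mc R$ determines the unique sector based at $o$ whose face of cotype $J$ is $\psi(\Lambda_J)$. Under this identification, both $(\proj_\mc R)_*\nu_o$ and $\mu_{\Lambda,\Lambda_J}^\psi$ are measures on $\mc R$ whose equality we must establish.

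Next, apply Lemma~\ref{lem: desintegration des prouniformes} to the chain $\{o\} \hookrightarrow \Lambda_J \hookrightarrow \Lambda$ with $\alpha_0 = \mathrm{id}$, obtaining
\[
\nu_o \;=\; \int_{\Isom(\Lambda_J, X)^o} \mu_{\Lambda,\Lambda_J}^{\psi'}\, d\nu_J(\psi').
\]
Pushing forward by $\proj_\mc R$ reduces the lemma to the pointwise identity $(\proj_\mc R)_*\mu_{\Lambda,\Lambda_J}^{\psi'} = \mu_{\Lambda,\Lambda_J}^\psi$ for $\nu_J$-almost every $\psi'$. For such generic $\psi'$, the cotype-$J$ face $u' = \partial\psi'(\Lambda_J)$ is opposite to $u$ in $\partial_\infty X$, so $\proj_\mc R$ restricts to a bijection $\mc R' \to \mc R$ realized by the Tits opposition inside a common apartment. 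The required measure transport should then follow from Corollary~\ref{cor: i star star} applied to a commuting diagram of restrictions $\Isom(\Sigma,X)^{\phi_0} \to \Isom(\Lambda,X)^\psi$ and $\Isom(\Sigma,X)^{\phi_0} \to \Isom(\Lambda,X)^{\psi'}$, where $\phi_0$ varies over apartment embeddings containing both $\psi(\Lambda_J)$ and $\psi'(\Lambda_J)$.

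The principal obstacle is this fiberwise identity. It requires two ingredients: first, showing that the set of $\psi'$ with $u'$ not opposite to $u$ has $\nu_J$-measure zero, which one expects to deduce from an analogue of Corollary~\ref{cor:opposées de mesure pleine} for cotype-$J$ simplices together with the link-level opposition criterion of Lemma~\ref{lem: opp ds link ->o ds apt}; and second, verifying that the Tits projection $\mc R' \to \mc R$ indeed coincides with the map induced by the two restrictions above, so that Corollary~\ref{cor: i star star} delivers the compatibility of prouniform measures. Granted these two points, integrating the fiberwise identity against $\nu_J$ recovers $(\proj_\mc R)_*\nu_o = \mu_{\Lambda,\Lambda_J}^\psi$, as claimed.
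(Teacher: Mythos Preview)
Your strategy departs from the paper's and leaves the central step unproven. The paper does not disintegrate $\nu_o$ along $\pi_J$; instead it verifies directly the defining property of the restricted prouniform measure on cylinder sets $\Omega_o(z)$. Concretely, it shows that $\nu_o\bigl((\proj_\mc R)^{-1}(\Omega_o(z))\cap\Omega^{o,u}\bigr)$ depends only on $\lambda=\sigma(o,z)$ and not on $z\in V_\lambda(o)$, by rewriting membership in that set as a condition on apartment embeddings $\phi\in\Isom(\Sigma,X)^o$ (namely $\phi(w_1\lambda)=z$ and $\phi(w_0\Lambda_J)=\psi(\Lambda_J)$), and then invoking the prouniformity of $\mu_\mathscr F^o$ on the convex hull $\Conv(\lambda,\Lambda_J)$. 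The passage from $\Omega^{o,u}$ to all of $\Omega$ is deferred to \cite{BFL23}.

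Your reduction to the fiberwise identity $(\proj_\mc R)_*\mu_{\Lambda,\Lambda_J}^{\psi'}=\mu_{\Lambda,\Lambda_J}^\psi$ is where the argument breaks. First, this identity is false when $u'$ is not opposite to $u$: if $\mc R,\mc R'$ share a chamber $\omega$, then $\proj_\mc R$ collapses $\mc R'$ to the single chamber $\omega$, so the pushforward is a Dirac mass. Hence the generic-opposition step is essential and cannot be deferred. Second, and more seriously, your proposed proof of the fiberwise identity via Corollary~\ref{cor: i star star} does not go through as stated. That corollary transports prouniform measures along restriction maps $i^*$ between spaces $\Isom(Y_k,X)^{\alpha_0}$ sharing the \emph{same} base constraint $\alpha_0$, whereas you need to relate measures with two different base constraints $\psi$ and $\psi'$. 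Introducing an intermediate $\phi_0$ on some $Y_0$ containing both $\Lambda_J$ and a translate $w\Lambda_J$ does not resolve this: for the restriction $\Isom(\Sigma,X)^{\phi_0}\to\Isom(\Lambda,X)$ to land in $\Isom(\Lambda,X)^{\psi'}$ and yield $\mu_{\Lambda,\Lambda_J}^{\psi'}$ via Corollary~\ref{cor: i star star}, one needs $Y_0\subset\Lambda$ and $Y_0\cap\Lambda=\Lambda_J$; but then $w\Lambda_J\not\subset Y_0$, so $\phi_0$ carries no information about $\psi$, and the second leg of the diagram is undefined. Finally, even granting a suitable $Y_0$, you would still have to check that the building-theoretic projection $\proj_\mc R$ agrees with the map induced by the two restrictions, which is a genuine geometric statement about opposite residues that you have not verified. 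The paper's direct computation on cylinder sets sidesteps all of this.
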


\begin{proof}
    This proof will work the same as that of \cite[Lemma 5.4]{BFL23}, as the statements are analogous. Fix $\lambda\in P$ and $z\in V_\lambda(o)$. By definition of the prouniform measure, it is enough to prove that $\nu_o((\proj_\mc R)\inv(\Omega_o(z))$ only depends on $\lambda$ and not on $z$, as long as $\Omega_o(z)\cap\mc R\ne \emptyset$. 
    
    For $x\in V_P$, let $\Omega^{x,u}$ be the set of chambers $\omega\in\Omega$ for which there exists an apartment in $X$ which contains $x,u$ and $\omega$. We first start by proving that 
    $$\nu_o((\proj_\mc R)\inv(\Omega_o(z))\cap \Omega^{o,u})$$
    does not depend on $z$, only on $\lambda$, as long as $\Omega_o(z)$ and $\mc R$ intersect non-trivially.

    For this, let $\mu_\mathscr F^o$ be the restricted prouniform measure on $\Isom(\Sigma,X)^o$. By Corollary \ref{cor: i star star}, restricting an isometry in $\Isom(\Sigma,X)^o$ to $\Lambda$ pushes the measure $\mu_\mathscr F^o$ onto $\nu_o$. Note also that $\nu_o$-a.e. $\omega\in\Omega$ is opposite to $\mc R$, in the sense that it belongs to a residue which is opposite to $\mc R$. In particular, the Weyl distance between $\omega$ and $\proj_\mc R(\omega)$ is a.e. constant, say equal to some $w_1\in W$ (see \cite[Lemma 5.36 and Proposition 5.114]{AB08}). Hence $\omega\in (\proj_\mc R)\inv(\Omega_o(z))\cap \Omega^{o,u}$ exactly when there is some isometry $\phi:\Sigma\to X$ with $\phi(\Lambda)=\mc S(o,\omega)$, $z=\phi(w_1\lambda)$ and $\psi(\Lambda_J)=\phi(w_0\Lambda_J)(=\phi(w_1\Lambda_J))$, so that 
    $$\nu_o((\proj_\mc R)\inv(\Omega_o(z))\cap \Omega^{o,u})=\mu_\mathscr F^o(\{\phi:\Sigma\to X|\phi(\Lambda_J)=\psi(\Lambda_J),z\in\phi(\Lambda)\}),$$
    which only depends on $\Conv(z,\psi(\Lambda_J))$, by definition of the prouniform measure $\mu_\mathscr F^o$. But this is isometric to $\Conv(\lambda,\Lambda_J)$, as we assumed that $\Omega_o(z)$ intersected $\mc R$ nontrivially, which only depends on $\lambda$ and not on $z$, as hoped.\medskip

    Proving that this is enough to conclude is the exact same as that of \cite{BFL23}, the reader is referred to their proof for this part.
\end{proof}

Given the two previous lemmas, the following corollary is now a straightforward application of Lemma \ref{lem: desintegration des prouniformes}.
\begin{cor}\label{cor: desintegration wrt pi_J}
    For $J\subset \llbracket1,n\rrbracket$, the following holds:
    $$\int_{\Omega_J} (\proj_\mc R)_*\nu_o\:\d((\pi_J)_*\nu_o)(\mc R)=\nu_o.$$
\end{cor}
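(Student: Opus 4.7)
The plan is to invoke Lemma~\ref{lem: desintegration des prouniformes} with the triple $Y_0 = \{o\}$, $Y_1 = \Lambda_J$, $Y_2 = \Lambda$, equipped with the obvious type-preserving inclusions (sending $o$ to the vertex of type $0$ in $\Lambda_J$, and $\Lambda_J$ as the face of $\Lambda$ of cotype $J$). With $\alpha_0$ the inclusion of $o$ into $X$, the disintegration reads
\[
\mu_{\Lambda}^{o} \;=\; \int_{\Isom(\Lambda_J, X)^{o}} \mu_{\Lambda,\Lambda_J}^{\alpha_1}\, d\mu_{\Lambda_J}^{o}(\alpha_1).
\]

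Now I would translate each of the three pieces through the identifications between spaces of embeddings and simplicial objects at infinity. Via the bijection $\Isom(\Lambda, X)^{o} \simeq \Omega$ (recalled in the example after Lemma~\ref{lem: desintegration des prouniformes}), the left-hand side is $\nu_o$. Via the bijection $\Isom(\Lambda_J, X)^{o} \simeq \Omega_J$ sending $\alpha_1$ to the residue $\mc R$ whose defining face of cotype $J$ is $\partial_\infty \alpha_1(\Lambda_J)$, the first of the two preceding lemmas identifies the integrating measure $\mu_{\Lambda_J}^{o}$ with $(\pi_J)_*\nu_o$. Finally, the second preceding lemma asserts exactly that the fiber measure $\mu_{\Lambda,\Lambda_J}^{\alpha_1}$ on $\Isom(\Lambda, X)^{\alpha_1} \simeq$ (chambers at infinity in $\Res(\mc R)$) becomes $(\proj_{\mc R})_*\nu_o$ under the same identification.

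Substituting these three reformulations directly into the disintegration formula yields
\[
\nu_o \;=\; \int_{\Omega_J} (\proj_{\mc R})_*\nu_o \; d\bigl((\pi_J)_*\nu_o\bigr)(\mc R),
\]
which is the desired identity. The only delicate point is verifying that the identification $\Isom(\Lambda_J, X)^{o} \simeq \Omega_J$ (used to rewrite the integrating measure) is compatible with the identification used to apply the second preceding lemma, i.e.\ that the $\alpha_1$-indexed family of fiber measures really is a function of the associated residue $\mc R$. This is transparent from the construction: the residue $\mc R$ attached to $\alpha_1$ is precisely the one containing all chambers at infinity of isometries $\phi:\Lambda\to X$ with $\phi|_{\Lambda_J} = \alpha_1$, so both lemmas label the same $\alpha_1$ by the same $\mc R$, and no further check is required.
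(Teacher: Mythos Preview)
Your proof is correct and follows precisely the approach indicated in the paper: the paper's proof consists of a single sentence stating that the corollary is a straightforward application of Lemma~\ref{lem: desintegration des prouniformes} given the two preceding lemmas, and you have simply made that application explicit. The only quibble is notational: $Y_0$ should be a vertex of type $0$ in $\Sigma$ (with $\alpha_0$ mapping it to $o\in X$), rather than literally $\{o\}\subset X$, but your intent is clear.
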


This disintegration formula for $\pi_J$ allows us to prove another useful formula:
\begin{cor}\label{cor: nu_R est absorbante}
    Let $J\subset\llbracket1,n\rrbracket$, and let $w\in W_J$. Let $\mc R$ be a residue of type $J$, and denote $\nu_\mc R=(\proj_\mc R)_*\nu_o$ and $\nu_J=(\pi_J)_*\nu_o$ for simplicity. Then for $\nu_J$-a.e. $\mc R\in\Omega_J$, 
    $$\int_\mc R \nu_{\omega,o}^w\d\nu_\mc R(\omega)=\nu_\mc R.$$
\end{cor}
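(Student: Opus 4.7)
The strategy is to combine the global absorption formula of Lemma \ref{lem: nu_o est absorbante} with the disintegration of $\nu_o$ along $\pi_J$ given by Corollary \ref{cor: desintegration wrt pi_J}, and then invoke the uniqueness of disintegrations. The crucial observation that makes this work is that when $w\in W_J$, the measure $\nu_{\omega,o}^w$ is supported entirely inside the $J$-residue of $\omega$, so it contributes only to that residue in the global formula.

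The first step I would carry out is to verify this support property: for $\omega\in\mathcal{R}$ and $w\in W_J$, the measure $\nu_{\omega,o}^w$ is supported on $\mathcal{R}$. By definition $\nu_{\omega,o}^w$ is the $(\mathrm{pr}_2\circ i_w)$-pushforward of the restricted prouniform measure on isometries $\phi:\Lambda^w\to X$ with $\phi(0)=o$ and $\partial_\infty\phi(\Lambda)=\omega$. For such a $\phi$ the chamber $\partial_\infty\phi(w\Lambda)$ is at Weyl distance $w$ from $\omega$, hence lies in the $J$-residue $\mathcal{R}$ of $\omega$, which is what we need.

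The second step is to start from Lemma \ref{lem: nu_o est absorbante},
$$\int_\Omega \nu_{\omega,o}^w\,d\nu_o(\omega)=\nu_o,$$
and disintegrate $\nu_o$ on the right as $\int_{\Omega_J}\nu_\mathcal{R}\,d\nu_J(\mathcal{R})$ via Corollary \ref{cor: desintegration wrt pi_J}. Doing the same thing on the left by Fubini yields
$$\int_{\Omega_J}\Bigl(\int_\mathcal{R}\nu_{\omega,o}^w\,d\nu_\mathcal{R}(\omega)\Bigr)\,d\nu_J(\mathcal{R})=\int_{\Omega_J}\nu_\mathcal{R}\,d\nu_J(\mathcal{R})=\nu_o.$$
By the support property established above, the inner measure $\widetilde{\nu}_\mathcal{R}:=\int_\mathcal{R}\nu_{\omega,o}^w\,d\nu_\mathcal{R}(\omega)$ is a probability measure concentrated on $\pi_J^{-1}(\mathcal{R})=\mathcal{R}$, so $\mathcal{R}\mapsto\widetilde{\nu}_\mathcal{R}$ provides a disintegration of $\nu_o$ along the measurable map $\pi_J:\Omega\to\Omega_J$. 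Uniqueness of disintegration in the standard Borel setting then forces $\widetilde{\nu}_\mathcal{R}=\nu_\mathcal{R}$ for $\nu_J$-almost every $\mathcal{R}\in\Omega_J$, which is exactly the claim.

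The main obstacle, such as it is, lies in justifying the measurability of $\omega\mapsto\nu_{\omega,o}^w$ and the use of Fubini in the second step, together with confirming that $\widetilde\nu_\mathcal{R}$ really qualifies as a disintegrating family (in particular, that it is supported in the fibre and is a probability measure). Both points follow from the prouniform construction of Section \ref{ssec: Prouniform measures} together with the support property: once those are in place, uniqueness of disintegration does the rest.
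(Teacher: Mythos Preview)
Your proof is correct and follows essentially the same route as the paper's own argument: start from Lemma \ref{lem: nu_o est absorbante}, disintegrate both sides along $\pi_J$ via Corollary \ref{cor: desintegration wrt pi_J}, note that $\nu_{\omega,o}^w$ is supported on the $J$-residue of $\omega$ because $w\in W_J$, and conclude by uniqueness of disintegration. The paper's version is terser and does not spell out the support or measurability checks, but the skeleton is identical.
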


\begin{proof}
    By Lemma \ref{lem: nu_o est absorbante},
    $$\int_\Omega \nu_{\omega,o}^w\d\nu_o(\omega)=\nu_o,$$
    so by the previous corollary,
    $$\int_{\Omega_J}\int_\Omega \nu_{\omega,o}^w\d\nu_\mc R(\omega)\d\nu_J(\mc R)=\int_{\Omega_J}\nu_\mc R\d\nu_J(\mc R).$$
    But now since for all $\omega\in\mc R$, $\nu_{\omega,o}^w$ is supported on $\mc R$, by uniqueness of the disintegration with respect to $\pi_J$, 
    $$\int_\Omega \nu_{\omega,o}^w\d\nu_\mc R(\omega)=\nu_\mc R$$
    for $\nu_J$-a.e. $\mc R$.
\end{proof}

\begin{rem}
    Note that this last formula is very similar to that of Lemma \ref{lem: nu_o est absorbante}, in that $\nu_\mc R$ is the equivalent in the smaller building $\mc R$ of $\nu_o$ in $\Omega$. The only difference between the two is that we need an affine building structure in order to define the measures on the spherical building at infinity. It turns out that there is in fact a construction of an affine building whose building at infinity is $\mc R$, called the \emph{façade} (see \cite{Rou23}) associated to the residue $\mc R$. The measure $\nu_o$ for this building is then easily seen to coincide with $\nu_\mc R$ in that context, so that Corollary \ref{cor: nu_R est absorbante} is an application of Lemma \ref{lem: nu_o est absorbante} to this façade. In particular, the formula is in fact true for \emph{every} $\mc R\in\Omega_J$, instead of $\nu_J$-a.e. $\mc R$ like in this statement. 
    
    The version we wrote here is enough for us however, and its proof does not require us to delve into the technicalities of façades.
\end{rem}

\subsection{Special subgroups of generalized Weyl groups}

Now that some basic properties of the $\nu_{\omega,o}^w$ have been established, we are ready to give a description of the special subgroups of $W$. We will need a couple preliminary lemmas. 

\begin{lem}\label{lem: chambre opp a 2 autres}
    For this Lemma, $\Delta$ is a thick \emph{spherical} building of type $(W,S)$. Then for any two chambers in $X$ there is another chamber which is opposite to both.
\end{lem}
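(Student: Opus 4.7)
The plan is to induct on $\ell(w)$, where $w := \delta(C_1, C_2) \in W$ denotes the Weyl distance between the two given chambers. The base case $w = e$ (that is, $C_1 = C_2$) reduces to the well-known fact that every chamber in a spherical building admits an opposite: taking any apartment $\Sigma$ through $C_1$, the unique chamber $C \in \Sigma$ with $\delta(C_1, C) = w_0$ does the job, and thickness is not needed here.

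For the inductive step, I would write $w = s w'$ with $\ell(w') = \ell(w) - 1$ and pick the chamber $C_1'$ which is $s$-adjacent to $C_1$ and lies on a minimal gallery from $C_1$ to $C_2$, so that $\delta(C_1', C_2) = w'$. The inductive hypothesis applied to the pair $(C_1', C_2)$ produces a chamber $D$ opposite to both, i.e. $\delta(C_1', D) = \delta(C_2, D) = w_0$. The multiplication rule for Weyl distances then yields $\delta(C_1, D) \in \{w_0, s w_0\}$; in the first case $D$ itself works, so one may assume $\delta(C_1, D) = s w_0 = w_0 t$, where $t := w_0 s w_0 \in S$ (conjugation by the longest element permutes $S$).

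The heart of the argument is the analysis of the $t$-panel $P$ through $D$. I first claim $\proj_P(C_1) = D$: were it some other chamber $D' \in P$, one would have $\delta(C_1, D) = \delta(C_1, D') \cdot t$ with $\ell(\delta(C_1, D)) = \ell(\delta(C_1, D')) + 1$, forcing $\delta(C_1, D') = w_0$ and leading to the impossible equality $\ell(w_0 t) = \ell(w_0) + 1$. Hence every $D'' \in P \setminus \{D\}$ satisfies $\delta(C_1, D'') = (w_0 t) \cdot t = w_0$, and is therefore opposite $C_1$. Symmetrically, maximality of $\ell(w_0) = \ell(\delta(C_2, D))$ forces $D \ne \proj_P(C_2)$ (otherwise every other chamber of $P$ would lie at combinatorial distance $\ell(w_0)+1$ from $C_2$), so there is a single bad chamber $D^* := \proj_P(C_2) \in P \setminus \{D\}$, with $\delta(C_2, D^*) = w_0 t$, while every other chamber of $P$ is opposite $C_2$. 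By thickness, $|P| \ge 3$, so I may pick $D' \in P \setminus \{D, D^*\}$; this $D'$ is opposite both $C_1$ and $C_2$, concluding the induction.

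The main subtlety is the Weyl-distance bookkeeping at the two projections onto $P$: both the identification $\proj_P(C_1) = D$ and the inequality $\proj_P(C_2) \ne D$ rest on the maximality of $\ell(w_0)$, and the element $t = w_0 s w_0$ is what pinpoints the two chambers of $P$ to be avoided. Once these two obstructions are located, thickness is exactly the hypothesis needed to supply a third chamber of $P$ opposite to both $C_1$ and $C_2$.
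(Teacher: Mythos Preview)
Your proof is correct, but it is organized differently from the paper's. The paper argues algorithmically: it starts with some $C_0$ opposite to $C$, and as long as $\delta(C_0,C')=w\ne w_0$ it picks $s\in S$ with $\ell(sw)=\ell(w)+1$, looks at the $s$-panel $\mc R$ through $C_0$, and uses thickness to find $C_1\in\mc R\setminus\{C_0,\proj_{\mc R}(C)\}$; this $C_1$ stays opposite $C$ while $\ell(\delta(C_1,C'))$ strictly increases, so the process terminates. Your argument instead inducts on $\ell(\delta(C_1,C_2))$, obtains $D$ from the inductive hypothesis, and then corrects $D$ by a single move in the $t$-panel with $t=w_0sw_0$. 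Both proofs ultimately rest on the same mechanism---thickness provides a third chamber in a panel once two ``bad'' chambers (two distinct projections) have been identified---but the paper walks the candidate towards oppositeness to the second chamber, whereas you walk the two given chambers towards each other and then perform one repair step. Your route requires the extra observation that $w_0Sw_0=S$ to name the correct panel type; the paper's route avoids this at the cost of an unbounded number of panel moves rather than your single one.
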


\begin{proof}
    This proof will be algorithmic. Let $C,C'\in \Delta$, and let $C_0\in \Delta$ be opposite to $C$. Let $\delta:\Delta\times \Delta\to W$ be the Weyl distance function on $\Delta$, and $w_0\in W$ be the maximal element.
    
    There are two possible cases: 
    \begin{itemize}
        \item[-] either $\delta(C_0,C')=w_0$, in which case we are done;
        \item[-] or $\delta(C_0,C')=w$ is not maximal, and there exists $s\in S$ such that $\ell(sw)=\ell(w)+1$. Let $\mc R$ be the $s$-residue containing $C_0$, by the thickness assumption $|\mc R|\ge 3$. Note also that there is exactly one element $D$ of $\mc R$ for which $\delta(C,D)\ne w_0$, namely $\proj_{\mc R}(C)$. Hence there is at least another chamber $C_1\in\mc R$ which satisfies $\delta(C_1,C)=w_0$ and $\ell(\delta(C_1,C'))=\ell(\delta(C_0,C'))+1$.
    \end{itemize}
    This reasoning may now be repeated with $C_1$ instead of $C_0$, and so on. Since $\ell(\delta(C_i,C'))$ is bounded, we must fall in the first scenario at some point, and we are done.
\end{proof}

\begin{lem}\label{lem: invariance par w_0J}
    Let $J\subset\llbracket1,n\rrbracket$ and let $w_J$ be the longest element of $W_J$. Let $A\subset \Omega$ be measurable such that for $\nu_o$-a.e. $\omega\in\Omega$,
    $$\nu_{\omega,o}^{w_J}(A)\in\{0,1\}.\qquad(*)$$
    Then, for $\nu_o$-a.e. $\omega\in\Omega$,
    $$\m 1_A(\omega)=\nu_{\omega,o}^{w_J}(A)=(\proj_\mc R)_*\nu_o(A),$$
    where $\mc R$ is the $J$-residue containing $\omega$.
\end{lem}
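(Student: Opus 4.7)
First, by Corollary \ref{cor: desintegration wrt pi_J}, $\nu_o=\int_{\Omega_J}\nu_\mc R\,d\nu_J(\mc R)$, so it suffices to prove the conclusion $\nu_\mc R$-almost everywhere on $\mc R$ for $\nu_J$-a.e.\ $\mc R\in\Omega_J$. Write $P_\omega:=\nu_{\omega,o}^{w_J}$ and $g(\omega):=P_\omega(A)$; by hypothesis, $g\in\{0,1\}$ $\nu_\mc R$-a.e.\ on each good $\mc R$. The plan is to show that $P_\omega$ and $\nu_\mc R$ are equivalent as probability measures on $\mc R$ for $\nu_\mc R$-a.e.\ $\omega$, and then deduce the statement from this equivalence.

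The identification $\proj_\omega^{w_J}(\omega')=\proj_\mc R(\omega')$ holds for every $\omega'\in\Omega^{\op}(\omega)$: writing $v=\delta(\omega,\omega'')$ for $\omega''\in\mc R$, the distance $\delta(\omega',\omega'')=v\inv w_0$ has length $\ell(w_0)-\ell(v)$, minimized precisely at $v=w_J$. Combined with Lemma \ref{lem: nuw est pushforward de nu} this yields $P_\omega=(\proj_\mc R)_*(\nu_o|_{\Omega_o^{\op}(\omega)})$, so $P_\omega\ll\nu_\mc R=(\proj_\mc R)_*\nu_o$ is automatic. The reverse direction $\nu_\mc R\ll P_\omega$ amounts to showing that the Radon--Nikodym density, which up to normalization is the conditional probability $\nu_o(\Omega_o^{\op}(\omega)\mid\proj_\mc R=\omega'')$, is positive for $\nu_\mc R$-a.e.\ $\omega''$ opposite to $\omega$ in $\mc R$. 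This is a combinatorial statement: every legitimate pair $(\omega,\omega'')$ at Weyl distance $w_J$ admits apartments through $o$ of positive prouniform mass, which follows by applying Lemma \ref{lem: desintegration des prouniformes} to an ind-$P$ tower prescribing first the vertex $o$ and the face of cotype $J$ at infinity indexing $\mc R$, then the convex hull $\Lambda^{w_J}$, and using that prouniform measures assign positive mass to every admissible constraint by thickness and regularity.

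Given the equivalence $P_\omega\sim\nu_\mc R$, the Lemma follows quickly: if $g(\omega_1)=1$ for some good $\omega_1\in\mc R$, then $P_{\omega_1}(A)=1$, which by equivalence forces $\nu_\mc R(A\cap\mc R)=1$; symmetrically $g(\omega_1)=0$ gives $\nu_\mc R(A\cap\mc R)=0$. Both cannot hold on the same $\mc R$, so $g$ is $\nu_\mc R$-a.e.\ constant on $\mc R$ equal to $\nu_\mc R(A)\in\{0,1\}$. Since $\m 1_A$ is $\{0,1\}$-valued with $\nu_\mc R$-mean $\nu_\mc R(A)\in\{0,1\}$, the set $A\cap\mc R$ is $\nu_\mc R$-null or $\nu_\mc R$-co-null, and so $\m 1_A=\nu_\mc R(A)=g$ $\nu_\mc R$-a.e.\ on $\mc R$.

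The main obstacle is the positivity of the conditional prouniform measures giving the reverse absolute continuity of $P_\omega$ with respect to $\nu_\mc R$; the remaining steps reduce to the elementary identification of the two projections and routine measure-theoretic manipulations using the assumption $g\in\{0,1\}$.
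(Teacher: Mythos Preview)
Your strategy hinges on the equivalence $P_\omega\sim\nu_\mc R$, but the reverse absolute continuity $\nu_\mc R\ll P_\omega$ is \emph{false}. Take $J=\llbracket1,n\rrbracket$, so that $w_J=w_0$, $\mc R=\Omega$, $\proj_\mc R=\id$, and $\nu_\mc R=\nu_o$. Then by Lemma~\ref{lem: nuw est pushforward de nu}, $P_\omega=\nu_o|_{\Omega_o^{\op}(\omega)}$, which is supported on $\Omega_o^{\op}(\omega)$. But $\Omega_o^{\op}(\omega)$ is the set of $\omega'$ whose projection to $\lk o$ is opposite to $\proj_{\lk o}(\omega)$ (Lemma~\ref{lem: opp ds link ->o ds apt}), and since $\lk o$ is a thick spherical building, the complement has positive $\nu_o$-measure. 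Hence $\nu_o\not\ll P_\omega$. Your Radon--Nikodym density $\nu_o(\Omega_o^{\op}(\omega)\mid\proj_\mc R=\omega'')$ vanishes on a set of positive $\nu_\mc R$-measure, and the appeal to ``positive prouniform mass of admissible constraints'' does not rescue this: the constraint that the apartment of $(\omega,\omega')$ pass through $o$ is genuinely restrictive.

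The paper's proof avoids this trap. It does not compare $P_\omega$ to $\nu_\mc R$ directly; instead it compares $P_{\omega'}$ and $P_{\omega''}$ for two chambers $\omega',\omega''\in\mc R$, using that the underlying sets $\Omega_o^{\op}(\omega')$ and $\Omega_o^{\op}(\omega'')$ always overlap with positive $\nu_o$-measure. This overlap is a combinatorial fact about thick spherical buildings (Lemma~\ref{lem: chambre opp a 2 autres} applied in $\lk o$): any two chambers admit a common opposite. From the overlap one gets that if $P_{\omega'}(A)=1$ then $A_\mc R:=\proj_\mc R^{-1}(A)$ is conull in $\Omega_o^{\op}(\omega')$, hence has positive mass in $\Omega_o^{\op}(\omega'')$, forcing $P_{\omega''}(A)>0$ and so $=1$. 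Thus $g$ is $\nu_\mc R$-essentially constant, and its value is identified with $\nu_\mc R(A)$ via Corollary~\ref{cor: nu_R est absorbante}. Your identification $\proj_\omega^{w_J}=\proj_\mc R$ and the final $\{0,1\}$ argument are fine; what is missing is this overlap mechanism in place of the non-existent measure equivalence.
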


\begin{proof}
    For simplicity, let $\nu_\mc R$ denote the measure $(\proj_\mc R)_*\nu_o$, and $\nu_J$ denote $(\pi_J)_*\nu_o$. By Corollary \ref{cor: desintegration wrt pi_J}, $(*)$ must hold $\nu_\mc R$-a.e. for $\nu_J$-a.e. $\mc R$. Fix now such an $\mc R$.
    
    Then for any $\omega'\in\mc R$, $\proj_{\omega'}^{w_J}\equiv \proj_\mc R$ $\nu_o$-a.e. In particular, by Lemma \ref{lem: nuw est pushforward de nu},
    $$\nu_{\omega',o}^{w_J}(A)= \frac1{\nu_o(\Omega_o^\op(\omega'))} \nu_o(A_\mc R\cap\Omega_o^\op(\omega')),$$
    where $A_\mc R=\proj_\mc R\inv(A)$. Also, $\omega''\in \Omega_o^\op(\omega')$ if and only if $\proj_{\lk o}(\omega')$ and $\proj_{\lk o}(\omega'')$ are opposite by Lemma \ref{lem: opp ds link ->o ds apt}. But now since $X$ is thick, so is $\lk o$, and thus by Lemma \ref{lem: chambre opp a 2 autres}, for any $\omega',\omega''\in\Omega,$ there is some $C\in \lk o$ such that $$\Omega_o^\op(\omega')\cap \Omega_o^\op(\omega'')\supset \proj_{\lk o}\inv(C),$$ 
    and in particular 
    $$\nu_o(\Omega_o^\op(\omega')\cap \Omega_o^\op(\omega''))>0.$$
    Hence, if $\omega',\omega''$ are chosen to verify
    \begin{eqnarray}
    \nu_{\omega',o}^{w_J}(A),\:\nu_{\omega'',o}^{w_J}(A)\in\{0,1\}\label{eqn: lem: "ergodicite" de W}
    \end{eqnarray}
    and if one of them, say $\omega'$, verifies $\nu_o(A_\mc R\cap \Omega_o^\op(\omega'))=\nu_o(\Omega_o^\op(\omega'))$, then the other verifies $\nu_o(A_\mc R\cap \Omega_o^\op(\omega''))>0$, so that it must also verify $\nu_{\omega'',o}^{w_J}(A)=1$. In other words, $\nu_{\omega',o}^{w_J}(A)=\nu_{\omega'',o}^{w_J}(A)$, as long as (\ref{eqn: lem: "ergodicite" de W}) holds. As (\ref{eqn: lem: "ergodicite" de W}) holds for $\nu_\mc R$-a.e. $\omega',\omega''$, we conclude that $\omega'\mapsto\nu_{\omega',o}^{w_J}(A)$ is constant $\nu_\mc R$-a.e.

    But now by Corollary \ref{cor: nu_R est absorbante}, for $\nu_J$-a.e. $\mc R$, this value is also shared by $\nu_\mc R$. Finally, we may use Corollary \ref{cor: desintegration wrt pi_J} in the other direction to conclude that the lemma does hold for $\nu_o$-a.e. $\omega\in\Omega$.
\end{proof}

\begin{lem}\label{lem: "ergodicite" de W}
    Let $o\in X$ and $J\subset \llbracket1,n\rrbracket$. Let $A\subset \Omega$ be such that for all $i\in J$ and $\nu_o$-a.e. $\omega\in \Omega$, $\nu_{\omega,o}^{s_i}(A)\in\{0,1\}$. Then for $\nu_o$-a.e. $J$-residue $\mc R$, $(\proj_\mc R)_*\nu_o(A)\in\{0,1\}$.
    
    In particular, if $J=\llbracket1,n\rrbracket$, then $\nu_o(A)\in\{0,1\}$.
\end{lem}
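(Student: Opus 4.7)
My plan is to bootstrap the hypothesis from the simple reflections $s_i$, $i \in J$, to the full Weyl group $W_J$, and then apply Lemma~\ref{lem: invariance par w_0J} to the longest element $w_J \in W_J$.

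First I would apply Lemma~\ref{lem: invariance par w_0J} once with the singleton $\{i\}$ in place of $J$: for each $i \in J$, the hypothesis $\nu_{\omega,o}^{s_i}(A) \in \{0,1\}$ gives that $\m 1_A(\omega) = \nu_{\omega,o}^{s_i}(A)$ for $\nu_o$-a.e. $\omega$. Intersecting the finitely many conull sets yields a single conull set $G \subset \Omega$ on which $\m 1_A(\omega) = \nu_{\omega,o}^{s_i}(A)$ for \emph{every} $i \in J$ simultaneously.

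The next step is an induction on $\ell(w)$ for $w \in W_J$, aiming to show that $\nu_{\omega,o}^w(A) = \m 1_A(\omega)$ for $\nu_o$-a.e.\ $\omega$. The case $\ell(w) \le 1$ is the previous paragraph. Given $w$ with $\ell(w) \geq 2$ and a reduced factorization $w = w' s_i$ (necessarily with $i \in J$, since $w \in W_J$), Lemma~\ref{lem: desintegration des nu_o} lets me write
$$\nu_{\omega,o}^w(A) = \int_\Omega \nu_{\omega',o}^{s_i}(A)\, d\nu_{\omega,o}^{w'}(\omega').$$
To replace the integrand by $\m 1_A(\omega')$, I need $\nu_{\omega,o}^{w'}$ to give full mass to $G$. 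This is the one technical point, and it is exactly what Lemma~\ref{lem: nu_o est absorbante} provides: since
$$\int_\Omega \nu_{\omega,o}^{w'}(G^c)\, d\nu_o(\omega) = \nu_o(G^c) = 0,$$
we get $\nu_{\omega,o}^{w'}(G) = 1$ for $\nu_o$-a.e.\ $\omega$. The integral then collapses to $\nu_{\omega,o}^{w'}(A) = \m 1_A(\omega)$ by the induction hypothesis.

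Applying this to $w = w_J$ I obtain $\nu_{\omega,o}^{w_J}(A) = \m 1_A(\omega) \in \{0,1\}$ for $\nu_o$-a.e.\ $\omega$, so Lemma~\ref{lem: invariance par w_0J} applies with the full $J$ and yields $(\proj_\mc R)_*\nu_o(A) = \m 1_A(\omega) \in \{0,1\}$ for $\nu_o$-a.e.\ $\omega$, where $\mc R = \pi_J(\omega)$. Since the value $(\proj_\mc R)_*\nu_o(A)$ depends only on $\mc R$, disintegrating via $\pi_J$ (Corollary~\ref{cor: desintegration wrt pi_J}) promotes this to ``for $\nu_J$-a.e.\ $\mc R \in \Omega_J$''. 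For the ``in particular'' statement, when $J = \llbracket 1,n\rrbracket$ the group $W_J$ is the full spherical Weyl group, so $\Omega$ consists of a single $J$-residue; hence $\Omega_J$ is a point, $\nu_\mc R = \nu_o$, and we conclude $\nu_o(A) \in \{0,1\}$.
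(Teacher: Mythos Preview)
Your proof is correct and follows essentially the same route as the paper's: apply Lemma~\ref{lem: invariance par w_0J} to each singleton $\{i\}$ to get $\m 1_A(\omega)=\nu_{\omega,o}^{s_i}(A)$ a.e., propagate this to all $w\in W_J$ by induction on $\ell(w)$ via Lemma~\ref{lem: desintegration des nu_o} (using Lemma~\ref{lem: nu_o est absorbante} to control the null sets), and then invoke Lemma~\ref{lem: invariance par w_0J} at $w_J$. The paper phrases the induction as $\nu_{\omega,o}^{ws_i}(A)=\nu_{\omega,o}^w(A)$ and concludes only $\nu_{\omega,o}^w(A)\in\{0,1\}$, but this is the same chain of equalities you wrote down, and your explicit handling of the conull set $G$ is a welcome clarification.
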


\begin{proof}
    By the previous lemma with $J=\{i\}$, if we let $B_i\subset\Omega$ be the set of $\omega\in\Omega$ such that $\nu_{\omega,o}^{s_i}(A)=1$, then $A=B_i$ $\nu_o$-a.e., for any $i$. In particular, since 
    $$\int_\Omega \nu_{\omega,o}^w\d \nu_o(\omega)=\nu_o$$
    for any $w\in W$, we also obtain $\nu_{\omega,o}^w(B_i)=\nu_{\omega,o}^w(A)$ for $\nu_o$-a.e. $\omega$. But now, 
    $$\nu_{\omega,o}^{ws_i}(A)=\int_\Omega \underbrace{\nu_{\omega',o}^{s_i}(A)}_{\m 1_{B_i}(\omega')}\d \nu_{\omega,o}^w(\omega')=\nu_{\omega,o}^w(B_i)=\nu_{\omega,o}^w(A)$$
    for $\nu_o$-a.e. $\omega$, all $w\in W_J$ and all $i\in J$ such that $\ell(ws_i)=\ell(w)+1$ by Lemma \ref{lem: desintegration des nu_o}.
    
    By a straightforward induction on $\ell(w)$, we thus get that for all $w\in W_J$, $\nu_{\omega,o}^w(A)\in\{0,1\}$ for $\nu_o$-a.e. $\omega\in \Omega$.\medskip

    In particular, we have $\nu_{\omega,o}^{w_J}(A)\in\{0,1\}$ for a.e. $\omega\in\Omega$, and we are done by Lemma \ref{lem: invariance par w_0J}.
\end{proof}

This is the first part of the analogue of Lemma \ref{lem: splitting in everywhere case}...

\begin{lem}\label{lem: passage au suffixe}
    If $A\subset \Omega$ and $w\in W$ are such that for $\nu_o$-a.e. $\omega$, 
    $$\nu_{\omega,o}^w(A)\in \{0,1\},$$
    then this is also true for any suffix of $w$. That is, if $w=w_1w_2$ with $\ell(w)=\ell(w_1)+\ell(w_2)$, then we also have 
    $$\nu_{\omega,o}^{w_2}(A)\in \{0,1\}.$$
\end{lem}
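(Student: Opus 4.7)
The plan is to use the disintegration Lemma~\ref{lem: desintegration des nu_o} applied to the decomposition $w=w_1w_2$, together with the basic real-analysis fact that a $[0,1]$-valued function whose integral against a probability measure lies in $\{0,1\}$ must be $\{0,1\}$-valued a.e.\ for that measure. To globalize from an a.e.\ statement with respect to $\nu_{\omega,o}^{w_1}$ to an a.e.\ statement with respect to $\nu_o$, I will then integrate in $\omega$ and invoke the ``absorbing'' property $\int_\Omega \nu_{\omega,o}^{w_1}\,d\nu_o(\omega)=\nu_o$ from Lemma~\ref{lem: nu_o est absorbante}.

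More concretely, define $f:\Omega\to [0,1]$ by $f(\omega')=\nu_{\omega',o}^{w_2}(A)$. Since $\ell(w_1w_2)=\ell(w_1)+\ell(w_2)$, Lemma~\ref{lem: desintegration des nu_o} gives
\[
\nu_{\omega,o}^{w}(A)\;=\;\int_\Omega f(\omega')\,d\nu_{\omega,o}^{w_1}(\omega')
\]
for every $\omega$. By hypothesis the left-hand side lies in $\{0,1\}$ for $\nu_o$-a.e.\ $\omega$. Since $f$ takes values in $[0,1]$, integrating $f$ against a probability measure can give $0$ only when $f=0$ a.e., and can give $1$ only when $f=1$ a.e.; in either case $f(1-f)=0$ holds $\nu_{\omega,o}^{w_1}$-a.e.

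It remains to upgrade this to an a.e.\ statement with respect to $\nu_o$. The function $f(1-f)$ is non-negative and vanishes on $\{f\in\{0,1\}\}$, so Tonelli together with Lemma~\ref{lem: nu_o est absorbante} yields
\[
\int_\Omega f(1-f)\,d\nu_o \;=\; \int_\Omega\!\int_\Omega f(\omega')(1-f(\omega'))\,d\nu_{\omega,o}^{w_1}(\omega')\,d\nu_o(\omega)\;=\;0,
\]
where the inner integral vanishes for $\nu_o$-a.e.\ $\omega$ by the previous paragraph. Consequently $f(1-f)=0$ holds $\nu_o$-a.e., that is, $\nu_{\omega',o}^{w_2}(A)\in\{0,1\}$ for $\nu_o$-a.e.\ $\omega'$, which is the desired conclusion.

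There is no real obstacle; the only point that deserves care is checking that the integrability hypotheses for applying Tonelli and the disintegration are met (both are fine because $f$ is bounded and the measures involved are probabilities), and that Lemma~\ref{lem: nu_o est absorbante} applies to $w_1$ itself (which it does, since the lemma is stated for arbitrary $w\in W_\Sigma$). The argument is symmetric in structure to the inductive step used in Lemma~\ref{lem: "ergodicite" de W}, but here one reads the disintegration formula ``backwards'' to propagate the $\{0,1\}$-property from $w$ down to its suffixes rather than building up from generators.
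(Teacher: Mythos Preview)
Your proof is correct and follows essentially the same approach as the paper: apply the disintegration formula from Lemma~\ref{lem: desintegration des nu_o} to write $\nu_{\omega,o}^w(A)$ as an integral of the $[0,1]$-valued function $\omega'\mapsto\nu_{\omega',o}^{w_2}(A)$ against the probability $\nu_{\omega,o}^{w_1}$, note that this forces the integrand to be $\{0,1\}$-valued $\nu_{\omega,o}^{w_1}$-a.e., and then use Lemma~\ref{lem: nu_o est absorbante} to pass to a $\nu_o$-a.e.\ statement. Your use of the auxiliary function $f(1-f)$ and Tonelli makes the last step slightly more explicit than in the paper, but the argument is the same.
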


\begin{proof}
    By Lemma \ref{lem: desintegration des nu_o}, we have 
    $$\nu_{\omega,o}^w(A)=\int_\Omega \nu_{\omega',o}^{w_2}(A)\d\nu_{\omega,o}^{w_1}.$$
    Since for every $\omega'\in\Omega$, $0\le \nu_{\omega',o}^{w_2}(A)\le 1$, we must have $\nu_{\omega',o}^{w_2}(A)=\nu_{\omega,o}^w(A)$ for $\nu_{\omega,o}^{w_1}$-a.e. $\omega'$ for every $\omega$ such that $\nu_{\omega,o}^w(A)\in\{0,1\}$. 
    
    As this happens for $\nu_o$-a.e. $\omega$, by Lemma \ref{lem: nu_o est absorbante}, we do indeed have $\nu_{\omega',o}^{w_2}(A)\in\{0,1\}$ for $\nu_o$-a.e. $\omega'$.
\end{proof}

...and this is the second part of the analogue of Lemma \ref{lem: splitting in everywhere case}.

\begin{lem}\label{lem: passage au préfixe}
    Let $A\subset \Omega$ and $w\in W$, and let $i,w'$ be such that $w=w's_i$ and $\ell(w)=\ell(w')+\ell(s_i)$. Assume that for $\nu_o$-a.e. $\omega$, 
    $$\nu_{\omega,o}^w(A)\in \{0,1\}.$$
    Then this also holds for $w'$: for $\nu_o$-a.e. $\omega$,
    $$\nu_{\omega,o}^{w'}(A)\in \{0,1\}.$$
\end{lem}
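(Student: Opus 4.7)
The plan is to reduce everything to Lemma \ref{lem: invariance par w_0J} applied with $J=\{i\}$, thereby identifying $A$ (up to null sets) with the set $\{\omega':\nu_{\omega',o}^{s_i}(A)=1\}$, and then to use the disintegration formula to conclude that $\nu_{\omega,o}^{w'}(A)=\nu_{\omega,o}^w(A)$ $\nu_o$-almost everywhere.

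First, set $f_1(\omega'):=\nu_{\omega',o}^{s_i}(A)\in[0,1]$. Since $\ell(w)=\ell(w')+1$, Lemma \ref{lem: desintegration des nu_o} (with $w_1=w'$, $w_2=s_i$) gives
$$\nu_{\omega,o}^{w}(A)=\int_\Omega f_1(\omega')\,\d\nu_{\omega,o}^{w'}(\omega').$$
By hypothesis, the left-hand side lies in $\{0,1\}$ for $\nu_o$-a.e.\ $\omega$. For any such $\omega$, since $0\le f_1\le 1$, this forces $f_1$ to be $\nu_{\omega,o}^{w'}$-a.e.\ equal to $0$ or $\nu_{\omega,o}^{w'}$-a.e.\ equal to $1$, depending on whether the integral is $0$ or $1$; either way, $f_1\in\{0,1\}$ holds $\nu_{\omega,o}^{w'}$-almost everywhere.

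Next I would globalize this: let $N=\{\omega'\in\Omega:f_1(\omega')\notin\{0,1\}\}$. By Lemma \ref{lem: nu_o est absorbante},
$$\nu_o(N)=\int_\Omega \nu_{\omega,o}^{w'}(N)\,\d\nu_o(\omega)=0,$$
so $f_1\in\{0,1\}$ holds $\nu_o$-almost everywhere. This is precisely the hypothesis of Lemma \ref{lem: invariance par w_0J} with $J=\{i\}$ (so that $w_J=s_i$), which yields $\m 1_A(\omega')=f_1(\omega')$ for $\nu_o$-a.e.\ $\omega'$.

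Finally, applying Lemma \ref{lem: nu_o est absorbante} once more to the $\nu_o$-null set $\{f_1\ne \m 1_A\}$ shows that for $\nu_o$-a.e.\ $\omega$, $f_1=\m 1_A$ holds $\nu_{\omega,o}^{w'}$-a.e.; plugging back,
$$\nu_{\omega,o}^{w}(A)=\int_\Omega f_1(\omega')\,\d\nu_{\omega,o}^{w'}(\omega')=\int_\Omega \m 1_A(\omega')\,\d\nu_{\omega,o}^{w'}(\omega')=\nu_{\omega,o}^{w'}(A)$$
for $\nu_o$-a.e.\ $\omega$. Since the left-hand side is in $\{0,1\}$ a.e., so is $\nu_{\omega,o}^{w'}(A)$, as required. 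The only delicate point is the bookkeeping with the various ``almost everywhere'' quantifiers; this is dealt with uniformly by the absorbing property $\int \nu_{\omega,o}^{w'}\,\d\nu_o(\omega)=\nu_o$ of Lemma \ref{lem: nu_o est absorbante}, which is the sole non-trivial ingredient besides Lemma \ref{lem: invariance par w_0J}.
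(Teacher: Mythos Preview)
Your proof is correct and follows essentially the same route as the paper's: establish $\nu_{\omega',o}^{s_i}(A)\in\{0,1\}$ for $\nu_o$-a.e.\ $\omega'$, apply Lemma \ref{lem: invariance par w_0J} with $J=\{i\}$ to identify this with $\m 1_A$, then use Lemma \ref{lem: nu_o est absorbante} and the disintegration to conclude $\nu_{\omega,o}^{w'}(A)=\nu_{\omega,o}^w(A)$. The only cosmetic difference is that the paper obtains the first step by citing Lemma \ref{lem: passage au suffixe} (with suffix $s_i$), whereas you reprove that special case inline via the same integral argument.
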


\begin{proof}
    Applying Lemma \ref{lem: passage au suffixe}, we have 
    $$\nu_{\omega,o}^{s_i}(A)\in\{0,1\}$$
    for $\nu_o$-a.e. $\omega$. Now applying Lemma \ref{lem: invariance par w_0J}, we also have that for $\nu_o$-a.e. $\omega$,
    $$\nu_{\omega,o}^{s_i}(A)=\m 1_A(\omega).\qquad (*)$$
    Now by Lemma \ref{lem: nu_o est absorbante}, $(*)$ also holds for $\nu_o$-a.e. $\omega$ for $\nu_{\omega,o}^{w'}$-a.e. $\omega'$. Integrating $(*)$, we then get
    $$\nu_{\omega,o}^{w'}(A)=\int_\Omega\m 1_A(\omega') \d\nu_{\omega,o}^{w'}(\omega')=\int_\Omega\nu_{\omega',o}^{s_i}(A)\d\nu_{\omega,o}^{w'}(\omega')=\nu_{\omega,o}^{w's_i}(A)=\nu_{\omega,o}^w(A)\in\{0,1\}$$
    for $\nu_o$-a.e. $\omega$.
\end{proof}

\begin{pro}\label{pro: the special subgroups are the special subgroups}
    The special subgroups of $W_\Sigma<\Aut_\Gamma(\Omega\times \Omega)$ are exactly the $W_J$ for some $J\subset \llbracket1,n\rrbracket$.
\end{pro}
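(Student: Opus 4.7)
The plan is to prove both inclusions. For the forward direction (every $W_J$ is special), I would take $\pi_J:\Omega\to\Omega_J$ sending a chamber to its face of cotype $J$; this map is $\Gamma$-equivariant whenever $\Gamma$ acts type-preservingly on $X$ (otherwise one enlarges the target to absorb the $\Gamma$-orbit of $J$ in the Dynkin diagram). Identifying the $W_\Sigma$-action on $\Omega^{2,\op}$ with the right action on $\mathscr F = \Isom(\Sigma,X)$ yields
$$\pr_1(w\cdot(\omega,\omega')) = \proj_\omega^w(\omega'),$$
which lies in the $J$-residue of $\omega$ if and only if $\delta(\omega,\proj_\omega^w(\omega')) = w$ lies in $W_J$. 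Hence $W_\Sigma \cap W_{\Gamma,\Omega}(\pi_J) = W_J$, so $W_J$ is special.

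For the converse, let $W' = W_\Sigma \cap W_{\Gamma,\Omega}(\pi)$ for some $\Gamma$-equivariant Borel $\pi:\Omega\to C$, and set $J = \{i\in S : s_i\in W'\}$. Since $W'$ is a subgroup, the inclusion $W_J\subset W'$ is automatic, and it remains to prove $W'\subset W_J$. Fix a countable generating family $\{A_n\}_{n\in\m N}$ for the Borel $\sigma$-algebra of $C$, and write $\tilde A_n = \pi^{-1}(A_n)\subset\Omega$. The strategy is to translate membership in $W'$ into a statement about the prouniform measures $\nu_{\omega,o}^w$, so that Lemmas~\ref{lem: passage au suffixe}, \ref{lem: passage au préfixe} and \ref{lem: invariance par w_0J} can be brought to bear.

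Given $w\in W'$, the defining identity $\pi(\proj_\omega^w(\omega')) = \pi(\omega)$, valid for $\nu_o\otimes\nu_o$-a.e.\ $(\omega,\omega')$, says via Fubini that for $\nu_o$-a.e.\ $\omega$ and every $n$ the set $(\proj_\omega^w)^{-1}(\tilde A_n)$ has $\nu_o$-measure equal to $\m 1_{\tilde A_n}(\omega)$. Intersecting with the positive-measure set $\Omega_o^{\op}(\omega)$ and applying Lemma~\ref{lem: nuw est pushforward de nu} yields $\nu_{\omega,o}^w(\tilde A_n) = \m 1_{\tilde A_n}(\omega)\in\{0,1\}$ for $\nu_o$-a.e.\ $\omega$ and every $n$. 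Now fix a reduced expression $w = s_{i_1}\cdots s_{i_k}$ and any index $j$: Lemma~\ref{lem: passage au suffixe} peels off the prefix $s_{i_1}\cdots s_{i_{j-1}}$ and Lemma~\ref{lem: passage au préfixe} peels off the suffix $s_{i_{j+1}}\cdots s_{i_k}$, leaving $\nu_{\omega,o}^{s_{i_j}}(\tilde A_n)\in\{0,1\}$ a.e. Lemma~\ref{lem: invariance par w_0J} with $J=\{i_j\}$ then upgrades this to
$$\nu_{\omega,o}^{s_{i_j}}(\tilde A_n) = (\proj_\mc R)_*\nu_o(\tilde A_n) = \m 1_{\tilde A_n}(\omega)$$
a.e., where $\mc R$ is the $\{i_j\}$-residue of $\omega$. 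Since $\mc R$ is finite and $(\proj_\mc R)_*\nu_o$ charges each chamber of $\mc R$ positively, this forces $\tilde A_n\cap\mc R\in\{\emptyset,\mc R\}$ for $\nu_o$-a.e.\ $\omega$ and every $n$.

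Taking a countable intersection over $n$, for $\nu_o$-a.e.\ $\omega$ the map $\pi$ is genuinely constant on the $\{i_j\}$-residue of $\omega$. Since $\proj_\omega^{s_{i_j}}(\omega')$ always lies in that residue when defined, it follows that $\pi(\proj_\omega^{s_{i_j}}(\omega')) = \pi(\omega)$ for $\nu_o\otimes\nu_o$-a.e.\ $(\omega,\omega')$; that is, $s_{i_j}\in W'$ and $i_j\in J$. Running this over every letter of the reduced expression of $w$ gives $w\in W_J$, as desired. The main obstacle I anticipate is the translation carried out in the third paragraph: the Bader--Furman condition lives on $\Omega^2$ with the ambient product harmonic measure, while the prouniform machinery of the previous section describes measures on chambers at prescribed Weyl distance within apartments through the marked vertex $o$. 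Lemma~\ref{lem: nuw est pushforward de nu} is precisely what bridges these two viewpoints and permits the inductive lemmas to take over.
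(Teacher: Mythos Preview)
Your overall strategy is sound and uses the same peeling lemmas as the paper, but there is a genuine gap in the endgame. The residue $\mc R$ you invoke is a rank-$1$ residue in the spherical building at infinity $\Omega=\Delta_\infty$, not in $X$; such residues are \emph{infinite} (for instance, in type $\tilde A_2$ over a local field $k$ they are copies of $\m P^1(k)$). Hence your claim that $\mc R$ is finite and that $(\proj_\mc R)_*\nu_o$ charges every chamber positively is false, and you cannot conclude that $\pi$ is \emph{genuinely} constant on $\mc R$: only $\nu_\mc R$-essential constancy follows from $(\proj_\mc R)_*\nu_o(\tilde A_n)=\m 1_{\tilde A_n}(\omega)$. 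The next sentence (``since $\proj_\omega^{s_{i_j}}(\omega')$ always lies in that residue, $\pi(\proj_\omega^{s_{i_j}}(\omega'))=\pi(\omega)$'') then does not follow: you need to know that the law of $\proj_\omega^{s_{i_j}}(\omega')$ under $\nu_o(\d\omega')$ is absolutely continuous with respect to $\nu_\mc R$, so that it almost surely lands in the $\nu_\mc R$-full set where $\pi$ equals $\pi(\omega)$.

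This is easily repaired using the observation recorded just before Lemma~\ref{lem: nuw est pushforward de nu} that $\proj_\omega^{s_i}$ coincides $\nu_o$-a.e.\ with the residue projection, whence $(\proj_\omega^{s_{i_j}})_*\nu_o=\nu_\mc R$ and essential constancy suffices. The paper organises the argument slightly differently: it takes $J$ \emph{minimal} with $W'\subset W_J$ and, after the same peeling to reach $\nu_{\omega,o}^{s_j}(A)\in\{0,1\}$ for every $j\in J$, invokes Lemma~\ref{lem: "ergodicite" de W} together with Corollary~\ref{cor: desintegration wrt pi_J} to show $\pi$ is $\nu_\mc R$-essentially constant on $J$-residues, giving $W_J\subset W'$ directly. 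Your ``$J$ from below'' and the paper's ``$J$ from above'' are equivalent organisations once the measure-theoretic point above is handled.
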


\begin{proof}
    Let $W'$ be a special subgroup, and let $\pi:\Omega\to C$ be the $\Gamma$-equivariant projection for which $W'=W_\Sigma\cap W_{\Gamma,\Omega}(\pi)$. Let also $J$ be minimal such that $W'<W_J$. For simplicity, denote $\nu_\mc R:=(\proj_\mc R)_*\nu_o$ and $\nu_J=(\pi_J)_*\nu_o$.  Our goal is to show that for $\nu_J$-a.e. $\mc R$, $\pi$ is $\nu_\mc R$-essentially constant. We would then be done, as we would thus obtain that any $w\in W_J$ satisfies $\pi\circ\pr_1\circ w=\pi\circ\pr_1$ $\nu_o$-a.e. by Corollary \ref{cor: desintegration wrt pi_J}, so that $W_J<W_\Sigma\cap W_{\Gamma,\Omega}(\pi)$. 

    Recall that for $W'$ to be a special subgroup, $\pi$ needs to satisfy 
    $$\pi(\pr_1(w\cdot(\omega,\omega')))=\pi(\omega)$$
    for $\nu_o$-a.e. $\omega,\omega'$ and all $w\in W'$. In other words, for almost every apartment of $\partial X$, all the chambers in an orbit of the Weyl group have the same image by $\pi$. By Fubini's Theorem, this is the same as saying that for $\nu_o$-a.e. $\omega$ and for $\nu_o$-a.e. apartment containing $\omega$, the chamber $\omega$ and the chamber at distance $w$ of $\omega$ in that apartment have the same image by $\pi$. 
    
    Hence if we let $A'\subset C$ be Borel, and let $A=\pi\inv(A')$, then for $\nu_o$-a.e. $\omega$ and all $w\in W'$,
    $$(\proj_\omega^w)_*\nu_o(A)=\m 1_A(\omega)\in \{0,1\}.$$
    Now since $\nu_o|_{\Omega_o^{\op}(\omega)}\ll \nu_o$ and by Lemma \ref{lem: nuw est pushforward de nu}, we also have $\nu_{\omega,o}^w\ll (\proj_\omega^w)_*\nu_o$. We then also have, for all $w\in W'$ and $\nu_o$-a.e. $\omega$,
    $$\nu_{\omega,o}^w(A)\in \{0,1\}.$$
    
    Let $j\in J$, and pick $w\in W'$ such that some reduced expression of $w$ contains $s_j$, say $w=s_{i_1}\cdots s_{i_k}$ with $i_l=j$. Such a $w$ always exists, as otherwise we would have $W'<W_{J\setminus \{j\}}$.
    
    We have proven above that for $\nu_o$-a.e. $\omega$,
    $$\nu_{\omega,o}^w(A)\in \{0,1\}.$$
    We can thus apply Lemma \ref{lem: passage au préfixe} inductively to obtain that for $w'=s_{i_1}\cdots s_{i_l}$, we also have 
    $$\nu_{\omega,o}^{w'}(A)\in \{0,1\}$$
    for $\nu_o$-a.e. $\omega$.
    Now applying Lemma \ref{lem: passage au suffixe}, we have 
    $$\nu_{\omega,o}^{s_j}(A)\in \{0,1\}$$
    for $\nu_o$-a.e. $\omega$.\medskip
    
    This reasoning holds for any $j\in J$, so we can now apply Lemma \ref{lem: "ergodicite" de W} to conclude that for any Borel subset $A'\subset C$, and for almost every $J$-residue $\mc R$, $\nu_\mc R(\pi\inv(A'))\in\{0,1\}$, which does indeed prove that $\pi$ is $\nu_\mc R$-essentially constant for $\nu_J$-a.e. $\mc R$.
\end{proof}

\section{Superrigidities}\label{sec: Superrigidities}
\subsection{Geometrically rigid spaces and superrigidity for morphisms}
The general idea in this section is that building lattices cannot act on spaces with hyperbolic properties. Here ``hyperbolic properties" should be understood in a slightly more general setting than just Gromov-hyperbolic, as the proof of Propositions \ref{pro: 3 cases morphism rigidity} and \ref{pro: 3 cases cocycle rigidity} can be done for more than just hyperbolic spaces. This is formalized in the following definition, which was inspired by earlier works of Bader and Furman, and given in \cite{GHL22}. Recall that the definition of a universally amenable action was given right before Definition \ref{dfn: strong boundary}.

\begin{dfn}\label{dfn: geometrically rigid}
    Let $\Lambda$ be a countable discrete group. Let $\m D$ be a countable discrete $\Lambda$-space, let $K$ be a compact metric space equipped with a $\Lambda$-action by homeomorphisms, and let $\Delta$ be a Polish space equipped with a $\Lambda$-action by Borel automorphisms. We say that the triple $(\m D,K,\Delta)$ is \emph{geometrically rigid} if the following hold:
    \begin{enumerate}
        \item The space $K$ admits a Borel $\Lambda$-invariant partition $K = K_\mathrm{bdd} \sqcup K_\infty$ such that
        \begin{itemize}
            \item there exists a Borel $\Lambda$-equivariant map $\theta_\mathrm{bdd} : K_\mathrm{bdd} \to \mc P_{<\infty}(\m D)$,
            \item there exists a Borel $\Lambda$-equivariant map $\theta_\infty : K_\infty \to \Delta$.
        \end{itemize}
        \item There exists a Borel $\Lambda$-equivariant map bar : $\Delta^{(3)} \to \mc P_{<\infty}(\m D)$.
        \item The $\Lambda$-action on $\Delta$ is universally amenable.
    \end{enumerate}
We say that $\Lambda$ is \emph{geometrically rigid with respect to} $\m D$ if there exist a compact metric $\Lambda$-space $K$ and a Polish $\Lambda$-space $\Delta$ such that the triple $(\m D,K,\Delta)$ is geometrically rigid.
\end{dfn}

We now give some examples of geometrically rigid spaces.\medskip
\begin{itemize}
    \item If $\Lambda$ is a hyperbolic group, then $\Lambda$ is geometrically rigid to itself. Indeed, let $\Delta$ be the Gromov boundary of $\Lambda$, and $K=\Lambda\cup\Delta$. If we let $K_{\rm{bdd}}=\Lambda$ and $K_\infty=\Delta$, and $\theta_{\rm{bdd}}$ and $\theta_\infty$ be the corresponding identity maps, everything works. The existence and measurability of the barycenter map, as well as the universal amenability of the $\Lambda$-action on its boundary is due to Adams (see \cite{Ada94}).
    \item If now $H$ is a finitely generated group, relatively hyperbolic with respect to a finite collection $\mc P$ of subgroups, then $H$ is geometrically rigid with respect to a hyperbolic graph, whose vertex stabilizers are either trivial or conjugated to a subgroup in $\mc P$, see \cite[Section 5.1]{GHL22}.
    \item If $\mc G$ is a finite simple graph (that is, $\mc G$ has no loops), let $A_\mc G$ be the right-angled Artin group associated to $\mc G$. If $\mc G$ does not split non-trivially as a join, then $A_\mc G$ is geometrically rigid with respect to the set of standard subcomplexes of the Salvetti complex $\tilde S_\mc G$, see \cite[Section 5.2]{GHL22}.
    \item Let $S$ be a boundaryless connected orientable surface of finite type. Its extended mapping class group $\rm{Mod}^*(S)$ is the group of all isotopy classes of homeomorphisms of $S$ which restrict to the identity on every boundary component (where isotopies are required to fix the boundary at all times). If $S$ is neither the torus or a $p$-punctured sphere with $p\le 3$, then $\rm{Mod}^*(S)$ is geometrically rigid with respect to the vertex set of the curve graph (see \cite[Section 9.1]{GHL22} for details).
    \item Let $G=H_1*H_2*\cdots H_p*F_N$ be the free product of some infinite countable groups $H_i$ and a free group of rank $N$. We let $\mc F=\{[H_1],\ldots [H_p]\}$ be the set of conjugacy classes of the $H_i$ in $G$, and let $\Out(G,\mc F)$ be the subgroup of $\Out(G)$ which preserves all the conjugacy classes in $\mc F$. Let also $\Out(G,\mc F^{(t)})$ be the subgroup of $\Out(G,\mc F)$ made of all outer automorphisms which have a representative in $\Aut(G)$ whose restriction to each of the $H_i$ is the conjugation by an element $g_i \in G$. If for any $i$, the centralizer in $H_i$ of any non-trivial element is amenable, then $\Out(G,\mc F^{(t)})$ is geometrically rigid with respect to the set of all conjugacy classes of proper free factors of $(G,\mc F)$. See \cite[Section 9.2]{GHL22}.
\end{itemize}

Geometrically rigid spaces are used in \cite[Proposition 4.10]{GHL22} to give a criterion for cocycles to verify some sort of rigidity. The following is their result applied to $X=\{\cdot\}$, so that it may be stated without any mention of cocycles for the reader only interested in morphisms (see Proposition \ref{pro: 3 cases cocycle rigidity} for the cocycle version).

\begin{pro}\label{pro: 3 cases morphism rigidity}
    Let $G$ be a locally compact group, and let $B$ be a strong boundary for $G$. Let $\Lambda$ be a countable discrete group, let $\m D$ be a countable discrete $\Lambda$-space, and assume that $\Lambda$ is geometrically rigid with respect to $\m D$. 
    
    Then there exists a Polish space $\Delta$ equipped with a universally amenable $\Lambda$-action, such that for any morphism $f:G\to \Lambda$, either 
    \begin{enumerate}
        \item the image of $f$ setwise stabilizes a finite subset of $\m D$, or
        \item the image of $f$ stabilizes a set of cardinality at most $2$ in $\Delta$, or else
        \item there exists a morphism $\phi:W_{G,B}\to\m Z/2\m Z$ with $\phi(w_{\mathrm{flip}})\ne0$ whose kernel is a special subgroup of the Weyl group $W_{G,B}$.  
    \end{enumerate}
\end{pro}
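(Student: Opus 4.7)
The plan is to follow the strategy of the proof of \cite[Proposition~4.10]{GHL22}, where an analogous statement is established in the more general cocycle setting (cf.\ Proposition~\ref{pro: 3 cases cocycle rigidity} below). Via the morphism $f$, the $\Lambda$-actions on $K$ and $\Delta$ become Borel $G$-actions. Since $G\curvearrowright B$ is amenable and $K$ is a compact metric $G$-space, there is a $G$-equivariant Borel map $\phi_0:B\to\mathrm{Prob}(K)$. Isometric ergodicity of $G\curvearrowright B\times B$ implies ordinary ergodicity of $G\curvearrowright B$, so the $G$-invariant Borel function $b\mapsto\phi_0(b)(K_\mathrm{bdd})$ is essentially constant; we split according to whether this constant is positive or zero, corresponding to the two components of the partition $K=K_\mathrm{bdd}\sqcup K_\infty$.

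When $\phi_0(b)(K_\mathrm{bdd})>0$ for a.e.\ $b$, I normalize the restriction and push forward by $\theta_\mathrm{bdd}$ to obtain a $G$-equivariant Borel map $\phi_1:B\to\mathrm{Prob}(\mc P_{<\infty}(\m D))$. Since $\mc P_{<\infty}(\m D)$ is countable, $\mathrm{Prob}(\mc P_{<\infty}(\m D))$ embeds isometrically in $\ell^1(\mc P_{<\infty}(\m D))$, on which $\Lambda$ acts by permutation of the canonical basis, hence isometrically. Isometric ergodicity of $G\curvearrowright B$ thus forces $\phi_1$ to be essentially constant; its essential value is an $f(G)$-invariant probability measure $\mu$ on the countable set $\mc P_{<\infty}(\m D)$. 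The (finite) set of atoms of $\mu$ of maximal weight is $f(G)$-invariant, and its union in $\m D$ is a finite $f(G)$-invariant subset, giving conclusion~(1).

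When instead $\phi_0(b)$ is concentrated on $K_\infty$ for a.e.\ $b$, pushing forward along $\theta_\infty$ yields a $G$-equivariant Borel map $\psi:B\to\mathrm{Prob}(\Delta)$. The partition of $B$ according to whether $|\mathrm{supp}(\psi(b))|\leq 2$ or $\geq 3$ is $G$-invariant Borel, and ergodicity forces one of the two sides to have full measure. In the first case, the map $b\mapsto\mathrm{supp}(\psi(b))$ defines a $G$-equivariant Borel map from $B$ into the Borel $\Lambda$-space of subsets of $\Delta$ of cardinality at most two; combining universal amenability of $\Lambda\curvearrowright\Delta$ with isometric ergodicity of $G\curvearrowright B$ (following the manipulation in \cite[Sec.~4.3]{GHL22}) yields an $f(G)$-invariant subset of $\Delta$ of cardinality at most two, giving conclusion~(2). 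In the second case, $\psi(b_1)\otimes\psi(b_2)\otimes\psi(b_3)$ assigns positive mass to $\Delta^{(3)}$ for almost every $(b_1,b_2,b_3)\in B^3$; composing with the pushforward by $\mathrm{bar}$ yields a $G$-equivariant Borel map $B^3\to\mathrm{Prob}(\mc P_{<\infty}(\m D))$, which, together with the flip involution on $B\times B$, is used to extract a $G$-equivariant Borel map $\pi:B\to C$ and a nontrivial morphism $\phi:W_{G,B}\to\m Z/2\m Z$ with $\phi(w_{\mathrm{flip}})\ne 0$ whose kernel is precisely $W_{G,B}(\pi)$, giving conclusion~(3).

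The main obstacle is the third case: one must simultaneously produce the target $\pi:B\to C$ identifying the kernel of $\phi$ as a special subgroup \emph{and} verify that $w_{\mathrm{flip}}$ is not in this kernel. This is the delicate part of \cite{GHL22}, where it is carried out for semisimple algebraic source groups; however, their argument only relies on the abstract strong boundary structure of $B$ together with the geometric rigidity data $(\m D,K,\Delta)$, so it transports verbatim once $\phi_0$ above has been produced. The role of the preceding paragraphs is therefore to make contact with the framework of \cite{GHL22} in our abstract strong boundary setting, after which the full trichotomy of that paper applies.
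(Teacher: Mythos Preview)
Your outline starts correctly (amenability gives a map $B\to\mathrm{Prob}(K)$, split along $K_{\mathrm{bdd}}\sqcup K_\infty$, and the $K_{\mathrm{bdd}}$ branch yields conclusion~(1) via isometric ergodicity), but the case analysis in the $K_\infty$ branch is misrouted and this is a genuine gap.

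In the $|\mathrm{supp}\,\psi(b)|\geq 3$ case, the barycenter map does \emph{not} lead to conclusion~(3). Pushing $\psi(b_1)\otimes\psi(b_2)\otimes\psi(b_3)$ through $\mathrm{bar}$ gives a $G$-equivariant map $B^3\to\mathrm{Prob}(\mc P_{<\infty}(\m D))$; one then uses isometric ergodicity (of $B\times B$, applied to the restriction in two of the three variables) to force this map to be essentially constant, landing back in conclusion~(1). There is no $\m Z/2\m Z$-morphism extracted here, and the flip involution plays no role in this branch.

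Conversely, the $|\mathrm{supp}\,\psi(b)|\leq 2$ case does \emph{not} directly yield conclusion~(2). You only get conclusion~(2) if $\psi$ is essentially independent of $b$; if $\psi$ genuinely depends on $b$, you must split further. When $\psi$ lands in $\mathrm{Prob}_2(\Delta)$, one again exploits the barycenter map (applied to the pair of supports from two different $b$'s together with a third point) to obtain conclusion~(1). It is only in the remaining subcase---$\psi$ lands in $\mathrm{Prob}_1(\Delta)=\Delta$ and depends on $b$---that conclusion~(3) arises: one builds the $G$-equivariant map $B\times B\to\mc P_2(\Delta)$, $(b,b')\mapsto\{\psi(b),\psi(b')\}$, shows there are exactly two $G$-equivariant lifts $B\times B\to\Delta^{(2)}$, and the $W_{G,B}$-action on these two lifts gives the morphism $\phi:W_{G,B}\to\m Z/2\m Z$ with $\phi(w_{\mathrm{flip}})\neq 0$ and special kernel (with $C=\Delta$ and $\pi=\psi$). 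Your final paragraph correctly identifies this as the delicate step, but it is not sitting in the branch you placed it in. Compare the overview given after Proposition~\ref{pro: 3 cases cocycle rigidity}.
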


An overview of the proof is given right after the statement of Proposition \ref{pro: 3 cases cocycle rigidity} in the more general case of cocycles.\medskip

Since the $\Lambda$-action on $\Delta$ is amenable, any point-stabilizer is amenable. In particular, in the second case of the proposition, the image of $f$ is amenable. Hence if $G$ has property (T), then so will $\rm{im}(f)$, so that $\rm{im}(f)$ is in fact finite.

We are now able to prove the morphism analogue for buildings of \cite[Theorem 2]{GHL22}:

\begin{theo}\label{theo: morphism superrigidity}
    Let $\Lambda$ be a countable group acting on a countable set $\m D$. Assume that $\Lambda$ is geometrically rigid with respect to $\m D$.
    Let $X$ be a thick irreducible Euclidean building of higher rank, or a product thereof. Let $\Gamma$ be a lattice in $X$ with property (T). 
    
    Then the image of every morphism $f : \Gamma \to \Lambda$ virtually fixes an element of $\m D$.
\end{theo}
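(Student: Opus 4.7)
The plan is to apply Proposition~\ref{pro: 3 cases morphism rigidity} to $G = \Gamma$ with the strong boundary $(\Omega, \nu)$, whose existence and the inclusion $W_\Sigma < W_{\Gamma,\Omega}$ are given by Theorem~\ref{theo: strong bnd & Weyl gp of bdg lattice} (extended factorwise to a product of buildings, using $\Omega=\prod_j\Omega_j$ and $W_\Sigma=\prod_j W_{\Sigma_j}$). This produces three alternatives, and I will show that case~(1) gives the desired conclusion directly, case~(2) collapses to the finite-image case using property~(T), and case~(3) does not occur.

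Case~(1) is immediate: if $f(\Gamma)$ preserves a finite $F \subset \m D$, the kernel of the induced action $f(\Gamma) \to \mathrm{Sym}(F)$ is a finite-index subgroup pointwise fixing $F$. In case~(2), $f(\Gamma)$ preserves a set of cardinality at most $2$ in $\Delta$, so an index-$\leq 2$ subgroup fixes a point $x \in \Delta$ and thus embeds in $\mathrm{Stab}_\Lambda(x)$, which is amenable since the $\Lambda$-action on $\Delta$ is universally amenable. Property~(T) passes to quotients and to finite-index subgroups, so this finite-index subgroup of $f(\Gamma)$ is simultaneously amenable and Kazhdan, hence finite; therefore $f(\Gamma)$ is finite, and it virtually fixes any element of $\m D$.

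The main obstacle is ruling out case~(3), in which one is given a morphism $\phi : W_{\Gamma,\Omega} \to \m Z/2\m Z$ with $\phi(w_{\mathrm{flip}}) \ne 0$ and $\ker \phi$ special. Restricting $\phi$ to $W_\Sigma$, which contains $w_0 = w_{\mathrm{flip}}$, yields a morphism whose kernel $\ker \phi \cap W_\Sigma$ is special in $W_\Sigma$ in the relative sense and therefore equal to some $W_J$ with $J \subsetneq S$, by Proposition~\ref{pro: the special subgroups are the special subgroups} applied to each irreducible factor. As the kernel of a group morphism, $W_J$ is normal in $W_\Sigma$. The decisive input is the following purely algebraic fact: in any irreducible spherical Coxeter system of rank at least $2$, the only normal standard parabolic subgroups are $\{1\}$ and the full group. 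Indeed, if $\emptyset \subsetneq J \subsetneq S$ and $W_J$ were normal, then for $i \in S\setminus J$ and $j \in J$ the element $s_i s_j s_i^{-1}$ would lie in $W_J$, but the subword property forces any reduced expression for $s_i s_j s_i$ to contain $s_i$ unless $m_{ij} = 2$; hence no edge of the Coxeter diagram could join $J$ to $S\setminus J$, contradicting irreducibility. Applied factorwise, $W_J = \prod_j W_{K_j}$ with each $W_{K_j} \in \{\{1\}, W_{\Sigma_j}\}$, so the quotient $W_\Sigma/W_J$ is a product of irreducible higher-rank spherical Weyl groups (each of order at least $6$). The only such product that embeds into $\m Z/2\m Z$ is the trivial one, forcing $W_J = W_\Sigma$ and $\phi(w_0) = 0$, the desired contradiction. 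This rules out case~(3) and completes the proof.
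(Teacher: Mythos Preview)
Your proof is correct and follows essentially the same path as the paper: invoke Theorem~\ref{theo: strong bnd & Weyl gp of bdg lattice}, apply the trichotomy of Proposition~\ref{pro: 3 cases morphism rigidity}, and dispatch the three cases identically. The only real difference is in case~(3): the paper simply asserts that since every factor has higher rank, any proper $W_J$ has index larger than $2$ in $W_\Sigma$, whereas you derive this more carefully by observing that $W_J$, being a kernel, must be normal, and then invoking the fact that an irreducible spherical Coxeter system of rank $\geq 2$ admits no nontrivial proper normal standard parabolic. Your argument is more explicit and self-contained; the paper's is shorter but relies on the reader knowing (or checking) the index bound.
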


\begin{proof}
    By Theorem \ref{theo: strong bnd & Weyl gp of bdg lattice}, $W_\Sigma$ is a generalized Weyl group for the strong boundary $(\Omega,\nu)$ of $\Gamma$. Applying Proposition \ref{pro: 3 cases morphism rigidity}, there is a Polish space $\Delta$ equipped with a universally amenable $\Lambda$-action such that we are in one of three cases.
    \begin{enumerate}
        \item If $\mathrm{im}(f)$ setwise stabilizes a finite subset $A$ of $\m D$, then the kernel of the map $\mathrm{im}(f)\to \mathrm{Sym}(A)$ is of finite index in $\mathrm{im}(f)$ and fixes a point in $\m D$.
        \item If $\mathrm{im}(f)$ stabilizes a set of cardinality at most $2$ in $\Delta$, then $\mathrm{im}(f)$ is finite, since it has (T) and is amenable, so $\mathrm{im}(f)$ virtually fixes all of $\m D$.
        \item The last possibility never happens. Indeed, assume we have a morphism $\phi:W_{\Gamma,\Omega}\to \m Z/2\m Z$ for which the kernel is a special subgroup and $\phi(w_{\mathrm{flip}})\ne0$. Then $\phi$ restricted to $W_\Sigma$ is still surjective as $w_{\mathrm{flip}}\in W_\Sigma$, and the kernel $W'$ of $\phi|_{W_\Sigma}$ is a proper special subgroup of $W_\Sigma$. By Proposition \ref{pro: the special subgroups are the special subgroups}, there is some $J\subsetneq \llbracket1,n\rrbracket$ such that $W' =W_J$. But since every factor of $X$ is of higher rank, any $W_J$ is of index larger than $2$ in $W_\Sigma$, a contradiction.
     \end{enumerate}
\end{proof}

\begin{rem}\label{rem: hyp (T) pas grave}
    Note that the assumption for $\Gamma$ to have property (T) is verified for all known examples, as Oppenheim recently proved that all uniform building lattices have (T) (see \cite{Opp24}). Property (T) also holds for non-uniform exotic lattices of type $\tilde A_2$ (see \cite{LSW23}), and is conjectured to hold for all types, though no examples of non-uniform exotic lattices are known at the moment.
\end{rem}

In specific examples of geometrically rigid spaces, this result can be strengthened to the fact that $\mathrm{im}(f)$ is simply finite. We only state the results here, their proof is the exact same as their analogues in \cite{GHL22}.

\begin{cor}\label{cor: grosse liste pour morphismes}
    Let $X$ be a product of thick irreducible Euclidean buildings of higher rank, and $\Gamma$ be a lattice in $X$ with property (T). 
    Then the following hold:
    \begin{enumerate}
        \item Let $\Lambda$ be a group which is hyperbolic relative to a finite collection $\mc P$ of subgroups. 
        Then the image of every morphism $\Gamma\to\Lambda$ is either finite or included in a conjugate of a parabolic subgroup of $\Lambda$.
        \item As a direct corollary from the first point, if $\Lambda$ is a hyperbolic group, then every morphism $\Gamma\to\Lambda$ has finite image.
        \item Let $\mc G$ be a finite simple graph, and let $A_\mc G$ be the associated right-angled Artin group.
        Then every morphism $\Gamma\to A_\mc G$ is trivial.
        \item Let $S$ be a surface of finite type. Then the image of every morphism $\Gamma\to\mathrm{Mod}^*(S)$ is finite.
        \item Let $H$ be a torsion-free hyperbolic group. Then any morphism $\Gamma\to \Out(H)$ has finite image. 
    \end{enumerate}
\end{cor}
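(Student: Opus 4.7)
The plan is to apply Theorem~\ref{theo: morphism superrigidity} to each of the five situations listed in the corollary. That theorem produces, for each such $f$, a finite-index subgroup of $f(\Gamma)$ stabilizing some point $d\in\m D$; I then intend to upgrade this conclusion, case by case, using the specific structure of the target $\Lambda$ together with the fact that any quotient of $\Gamma$ inherits property~(T). The post-Theorem~\ref{theo: morphism superrigidity} arguments are exactly those used in \cite{GHL22} to derive the corresponding corollaries from their semisimple-algebraic version of the same type of result, so I only sketch them.

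For (1), I would use the fact that a vertex stabilizer in the hyperbolic graph $\m D$ associated to a relatively hyperbolic $(\Lambda,\mc P)$ is either finite or a conjugate of some $P\in\mc P$. In the finite-stabilizer case, $f(\Gamma)$ is virtually trivial and hence finite; in the parabolic case, almost-malnormality of parabolics in relatively hyperbolic groups upgrades ``$f(\Gamma)$ virtually sits in a conjugate of $P$'' to ``$f(\Gamma)$ sits in a conjugate of $P$''. Item (2) is then the special case $\mc P=\emptyset$ of (1). For (3), the stabilizer of a standard subcomplex of $\tilde S_\mc G$ is a standard parabolic subgroup of $A_\mc G$, itself a right-angled Artin group on an induced subgraph; since any non-trivial right-angled Artin group surjects onto $\m Z$, it has property~(T) only when trivial. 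The finite-index subgroup of $f(\Gamma)$ fixing the chosen subcomplex is therefore trivial, forcing $f(\Gamma)$ to be finite and, since $A_\mc G$ is torsion-free, trivial.

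For (4) and (5), I intend to induct on the complexity of the surface $S$ (respectively on the Kurosh rank of $(G,\mc F)$). The stabilizer in $\mathrm{Mod}^*(S)$ of a vertex of the curve graph is, up to finite index, an extension of mapping class groups of subsurfaces of strictly smaller complexity; similarly the stabilizer of a proper free factor in $\Out(G,\mc F^{(t)})$ reduces, up to finite index, to an $\Out$-group of a strictly simpler free product. In each case the simpler object still falls under the geometric rigidity framework, so the inductive hypothesis applies and the image is eventually forced to be finite.

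The main obstacle is the induction in (4) and (5): it requires a careful combinatorial analysis of vertex stabilizers in the curve complex (respectively in the free factor graph of $(G,\mc F)$) together with a verification that the induction terminates at a base case on which the image must be finite. This is carried out in detail in \cite[Sections~9.1 and~9.2]{GHL22}, and the building-theoretic Theorem~\ref{theo: morphism superrigidity} plugs in as a drop-in replacement for the semisimple-algebraic input used there, so no modification of those arguments is needed.
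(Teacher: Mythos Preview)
Your proposal is correct and follows exactly the approach the paper takes: the paper does not give a proof at all beyond the sentence ``their proof is the exact same as their analogues in \cite{GHL22}'', and your plan is precisely to plug Theorem~\ref{theo: morphism superrigidity} into those arguments. One small imprecision: in item~(3) your one-shot argument (``the finite-index subgroup has (T) and sits in a RAAG, hence is trivial'') only shows the image lies in the kernel of the abelianization, not that it is trivial; the actual argument in \cite{GHL22} iterates, passing to successively smaller standard parabolics until the subgraph is empty, which you should mention even in a sketch.
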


\subsection{Cocycles}
Cocycles are a general construction, generalizing that of a morphism and arising in a multitude of situations in geometric and measured group theory. We state here most results from the previous section in the context of cocycles.
\begin{dfn}
    Let $G$ be a group, along with a measure-class preserving action on a standard Borel set $(X,\mu)$, and let $\Lambda$ be a countable discrete group.
    \begin{itemize}
        \item A map $c:G\times X\to\Lambda$ is a \emph{cocycle} if it verifies 
    $$c(g_1g_2,x)=c(g_1,g_2x)c(g_2,x)$$
    for every $g_1,g_2\in G$ and almost every $x\in X$.
        \item Two cocycles $c_1,c_2$ are said to be \emph{cohomologous} if there is a Borel map $f:X\to \Lambda$ such that for any $g$ and a.e. $x$, $$c_2(g,x)=f(gx)^{-1}c_1(g,x)f(x).$$
        \item Given a standard Borel $\Lambda$-space $\Delta$ and a cocycle $c$, a Borel map $f:X\to \Delta$ is said to be $c$-\emph{equivariant} if for all $g$ and a.e. $x$,
        $$f(gx)=c(g,x)f(x).$$
    \end{itemize}
\end{dfn}

Cocycles may be seen as a generalization of morphisms. Indeed, if $\pi:G\to\Lambda$ is a morphism, then the map $c_\pi:G\times X\to\Lambda$ defined by $c_\pi(g,x)=\pi(g)$ is a cocycle. 

Cocycles appear naturally in group cohomology, where these names are taken from, but also in the context of measure equivalence.

\begin{ex}
    Two discrete groups $\Gamma,\Lambda$ are said to be \emph{measure equivalent} (ME for short) if there is an essentially free action $\Gamma\times\Lambda\curvearrowright(\Omega,\mu)$ by measure-preserving automorphisms on some (infinite) measured space, for which both the actions of $\Lambda$ and $\Gamma$ admit fundamental domains of finite measure.
    
    Let $X$ be a fundamental domain for the $\Lambda$-action. Then by freeness of the $\Lambda$-action, for any $\gamma\in\Gamma$ and $x\in X$, there is a unique element $\lambda\in\Lambda$ which takes $\gamma x$ back to $X$, i.e. $(\gamma,\lambda)\cdot x\in X$. This allows us both to define a cocycle 
    $$c:(\gamma,x)\mapsto \lambda$$ 
    and an action of $\Gamma$ on $X$ by $\gamma\cdot x=(c(\gamma,x),\gamma)\cdot x$.
\end{ex}

We will say a pair $(G,\Lambda)$ is \emph{cocycle-rigid}, where $G$ is any lcsc group and $\Lambda$ is countable discrete, if for any standard probability space $X$ endowed with an ergodic pmp $G$-action, any cocycle $G\times X\to\Lambda$ is cohomologous to a cocycle with values in a finite subgroup of $\Lambda$. 

We will see that the machinery of geometrically rigid spaces is able to provide a number of examples of cocycle rigid pairs (see Corollary \ref{cor: grosse liste pour cocycles}).

This is the cocycle equivalent of Proposition \ref{pro: 3 cases morphism rigidity} (see also \cite[Proposition 4.10]{GHL22}):
\begin{pro}\label{pro: 3 cases cocycle rigidity}
    Let $G$ be a locally compact group, and let $B$ be a strong boundary for $G$. Let $X$ be a standard probability space equipped with an ergodic measure-preserving $G$-action. Let $\Lambda$ be a countable discrete group, let $\m D$ be a countable discrete $\Lambda$-space, and assume that $\Lambda$ is geometrically rigid with respect to $\m D$. 
    
    Then there exists a Polish space $\Delta$ equipped with a universally amenable $\Lambda$-action, such that for every cocycle $c:G\times X\to \Lambda$, either 
    \begin{enumerate}
        \item the cocycle $c$ is cohomologous to a cocycle that takes its values in the setwise stabilizer of a finite subset of $\m D$, or\label{item: 3 cases cocycle rigidity 1}
        \item there exists a $c$-equivariant Borel map $X\to\Delta$ or a $c$-equivariant Borel map $X\to\Delta^{(2)}/\mathfrak S_2$, or else\label{item: 3 cases cocycle rigidity 2}
        \item there exists a morphism $\phi:W_{G,B}\to\m Z/2\m Z$ with $\phi(w_{\mathrm{flip}})\ne0$ whose kernel is a special subgroup of the Weyl group $W_{G,B}$.\label{item: 3 cases cocycle rigidity 3}
    \end{enumerate}
\end{pro}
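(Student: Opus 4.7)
The plan is to follow the blueprint of \cite[Proposition 4.10]{GHL22}, the direct analogue of this statement for semisimple algebraic groups, with our strong boundary $B$ playing the role of $G/P$. I would begin by extending $c$ to a cocycle $\tilde c : G \times (B \times X) \to \Lambda$ via $\tilde c(g,(b,x)) = c(g,x)$. Since $G \curvearrowright B$ is amenable, so is $G \curvearrowright B \times X$, and applying amenability to the compact metric $\Lambda$-space $K$ produces a $\tilde c$-equivariant Borel map $\psi : B \times X \to \mathrm{Prob}(K)$. Because the partition $K = K_{\mathrm{bdd}} \sqcup K_\infty$ is $\Lambda$-invariant, the scalar $(b,x) \mapsto \psi(b,x)(K_\infty)$ is $G$-invariant, hence essentially constant, and splits the argument into two main branches.

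If $\psi(b,x)(K_{\mathrm{bdd}}) > 0$ on a positive measure set, I would condition $\psi$ on $K_{\mathrm{bdd}}$, push it through $\theta_{\mathrm{bdd}}$ to land in $\mathrm{Prob}(\mc P_{<\infty}(\m D))$, and extract a $\tilde c$-equivariant map $B \times X \to \mc P_{<\infty}(\m D)$ by choosing an atom of maximal weight and minimal cardinality. Since $\mc P_{<\infty}(\m D)$ is a countable discrete $\Lambda$-metric space, isometric ergodicity of $G \curvearrowright B \times B$ forces the result to be essentially independent of $b$, giving a $c$-equivariant $X \to \mc P_{<\infty}(\m D)$ whence case \ref{item: 3 cases cocycle rigidity 1} after a standard coboundary reduction. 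Otherwise $\psi$ is a.e. supported on $K_\infty$, and composing with $(\theta_\infty)_*$ gives a $\tilde c$-equivariant $\varphi : B \times X \to \mathrm{Prob}(\Delta)$. Since $\Lambda$ acts on $\Delta$ by Borel bijections, the cardinality of the essential support of $\varphi(b,x)$ is a $G$-invariant function, hence a.e. equal to some $N \in \{1, 2, \dots\} \cup \{\infty\}$; for $N = 1$ (respectively $N = 2$) one integrates out $B$ via isometric ergodicity to obtain a $c$-equivariant $X \to \Delta$ (respectively $X \to \Delta^{(2)}/\mathfrak S_2$), giving case \ref{item: 3 cases cocycle rigidity 2}.

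The main obstacle is the case $N \geq 3$, which must produce case \ref{item: 3 cases cocycle rigidity 3}. Here the plan is to exploit two boundary variables: from $\varphi$ I would construct a $\tilde c$-equivariant Borel map $B \times B \times X \to \Delta^{(3)}$ by selecting three distinct points distributed over the essential supports of $\varphi(b,x)$ and $\varphi(b',x)$, then compose with the barycenter $\mathrm{bar}$ to land in $\mc P_{<\infty}(\m D)$, and refine via a Borel section to a $\tilde c$-equivariant $\pi : B \times X \to C$ where $C$ is a countable discrete standard Borel $G$-space. The subgroup $W' = \{w \in W_{G,B} \mid \pi \circ \pr_1 \circ w = \pi \circ \pr_1\}$ is then by definition a special subgroup of $W_{G,B}$, and the $N \geq 3$ hypothesis together with the asymmetric role of $b$ and $b'$ in the construction forces $w_{\mathrm{flip}} \notin W'$. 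An appropriate sign-type morphism $\phi : W_{G,B} \to \m Z/2\m Z$ attached to the resulting index-$2$ configuration then satisfies $\phi(w_{\mathrm{flip}}) \neq 0$ and has special kernel, as required. The delicate point is making the Borel selection canonical enough that $W'$ is a genuine special subgroup and that the resulting $\m Z/2\m Z$-quotient is well-defined; this is exactly the content of \cite[Section 4]{GHL22} and transfers without modification to our setting, the only input specific to buildings being the existence of the strong boundary $(\Omega,\nu)$ provided by Theorem \ref{theo: strong bnd & Weyl gp of bdg lattice}.
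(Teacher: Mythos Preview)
Your overall strategy is correct and matches the paper's (which in turn just cites \cite[Proposition 4.10]{GHL22}), but your case routing is wrong in a way that creates a genuine gap.

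The step ``for $N=1$ (respectively $N=2$) one integrates out $B$ via isometric ergodicity'' does not work. Isometric ergodicity applies to $G$-equivariant maps into separable metric spaces on which $G$ acts by \emph{isometries}. The space $\Delta$ is only Polish with a Borel $\Lambda$-action; there is no reason the map $B\times X\to\Delta$ should be essentially independent of $b$. In the paper's argument, the $N=1$ case where the map genuinely depends on the $B$-coordinate is precisely the source of conclusion~\ref{item: 3 cases cocycle rigidity 3}, not conclusion~\ref{item: 3 cases cocycle rigidity 2}.

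Conversely, your routing of $N\ge 3$ to conclusion~\ref{item: 3 cases cocycle rigidity 3} is also off. When $\varphi(b,x)$ has support of size at least $3$, you can pick a triple from that support using a single boundary variable, apply $\mathrm{bar}$ to land in $\mc P_{<\infty}(\m D)$, and then (since this target \emph{is} countable discrete) isometric ergodicity does kill the $b$-dependence, yielding conclusion~\ref{item: 3 cases cocycle rigidity 1}. Your two-variable construction in this case would likewise land in $\mc P_{<\infty}(\m D)$ and collapse to conclusion~\ref{item: 3 cases cocycle rigidity 1}; there is no mechanism forcing $w_{\mathrm{flip}}\notin W'$ here.

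The correct bifurcation, as in the paper's sketch, is: $N\ge 3$ gives conclusion~\ref{item: 3 cases cocycle rigidity 1} via the barycenter; $N\le 2$ with $b$-independent $f$ gives conclusion~\ref{item: 3 cases cocycle rigidity 2}; $N=2$ with $b$-dependent $f$ again gives conclusion~\ref{item: 3 cases cocycle rigidity 1} (two distinct pairs yield a triple, then barycenter); and only the residual case of a $b$-dependent map $f:X\times B\to\Delta$ produces conclusion~\ref{item: 3 cases cocycle rigidity 3}, via the map $f^2(x,b,b')=\{f(x,b),f(x,b')\}$, its essential uniqueness, and the resulting $W_{G,B}$-action on the two ordered lifts, with the special quotient being $C=L^0(X,\Delta)$.
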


Here is a quick overview of the proof of this proposition. For an actual proof, see \cite[Proposition 4.10]{GHL22}. For $i\in\m N$, let $c_i:G\times X\times B^i\to\Lambda$ be defined by $c_i(g,x,b_1,\ldots b_i)=c(g,x)$. This is still a cocycle, and we can now use amenability of the $G$-action on $B$ (and thus on $X\times B$), we get a $c_1$-equivariant map $X\times B\to \rm{Prob}(K)$. By ergodicity and $G$-invariance of the decomposition $K=K_{\rm{bdd}}\sqcup K_\infty$, the image of the map is either in $\rm{Prob}(K_{\rm{bdd}})$ or in $\rm{Prob}(K_\infty)$. In the first case, composing with $(\theta_{\rm{bdd}})_*$ we get a $c_1$-equivariant map $X\times B\to \rm{Prob}(\mc P_{<\infty}(\m D))$. Using the fact that $\mc P_{<\infty}(\m D)$ is countable and a bit of work, we get conclusion \ref{item: 3 cases cocycle rigidity 1}.

If instead we have a $c_1$-equivariant map $X\times B\to \rm{Prob}(K_\infty)$, we can push it to a map $f:X\times B\to \rm{Prob}(\Delta)$. By ergodicity, it is either supported on $\rm{Prob}_{\ge 3}(\Delta)$ or $\rm{Prob}_{\le 2}(\Delta)$. In the first case, we can use the barycenter map to get conclusion \ref{item: 3 cases cocycle rigidity 1} again. In the other case, if $f$ does not depend on $B$, conclusion \ref{item: 3 cases cocycle rigidity 2} holds. Otherwise, still by ergodicity, $f$ has image either in $\rm{Prob}_1(\Delta)=\Delta$ or $\rm{Prob}_2(\Delta)$. In the second case, we again get conclusion \ref{item: 3 cases cocycle rigidity 1}. Finally, we must have a $c_1$-equivariant map $f:X\times B\to \Delta$, which depends on the $B$-coordinate. We can then again construct a $c_2$-equivariant map $f^2:X\times B^2\to \mc P_2(\Delta)$ by $f^2(x,b,b')=\{f(x,b),f(x,b')\}$. This map must be essentially unique, lest the barycenter argument be repeated again. In this case, there are exactly two $c_2$-equivariant maps $X\times B^2\to \Delta^{(2)}$, and the action on these two maps of $W_{G,B}$ induces a morphism into $\m Z/2\m Z$, for which $w_{\rm{flip}}$ acts non-trivially. One then checks that the kernel of this action is special, for $C=L^0(X,\Delta)$, giving conclusion \ref{item: 3 cases cocycle rigidity 3}.\medskip

The following result generalizes the fact that any amenable discrete group with property (T) is finite, and will be used in the same way as for the morphism case. It appears in a paper by Spatzier-Zimmer \cite{SZ91}, and a proof of this result is also given in \cite[Lemma 4.16]{GHL22} for instance.

\begin{pro}\label{pro: Spatzier-Zimmer}
    Let $G$ be a lcsc group with property (T), and let $X$ be a standard Borel probability space equipped with an ergodic measure-preserving $G$-action. Let $\Lambda$ be a countable group acting on a standard Borel space $\Delta$, so that the $\Lambda$-action on $\Delta$ is universally amenable. 
    Let $c : G \times X \to \Lambda$ be a measurable cocycle, and assume that there exists a $c$-equivariant Borel map from $X$ to $\Delta$.
    Then $c$ is cohomologous to a cocycle with values in a finite subgroup of $\Lambda$.
\end{pro}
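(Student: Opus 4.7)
The plan is to follow the classical Spatzier--Zimmer strategy by combining two ingredients: the universal amenability of $\Lambda \acts \Delta$ to exhibit $c$ as an \emph{amenable cocycle} in Zimmer's sense, and property (T) of $G$ to force the reduction to a finite subgroup.

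First I would form the skew-product $G$-action on $X \times \Delta$ defined by $g \cdot (x, \delta) = (gx, c(g,x)\delta)$. By $c$-equivariance, the graph of $\phi$, namely $\{(x, \phi(x)) : x \in X\}$, is a $G$-invariant Borel subset, and via this section the cocycle $c$ gets realized as taking its values in the pointwise stabilizer subgroupoid $\{(x, \lambda) : \lambda \phi(x) = \phi(x)\}$. The hypothesis that $\Lambda \acts \Delta$ is universally amenable then implies that this skew-product $G$-action on $X \times \Delta$ is amenable in the sense of Zimmer with respect to any $G$-quasi-invariant measure, because amenability of the skew-product is exactly controlled by the behaviour of quasi-invariant measures on the $\Delta$-fibres. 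In particular, $c$ is an amenable cocycle and the stabilizers $\Lambda_{\phi(x)}$ are amenable subgroups of $\Lambda$ for $\mu$-a.e.~$x$.

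Second, I would invoke Zimmer's cocycle reduction theorem: any amenable cocycle from a property (T) group, defined on an ergodic probability measure-preserving action, into a locally compact group is cohomologous to a cocycle with values in a compact subgroup. Since $\Lambda$ is discrete, compact subgroups are finite, which yields the desired conclusion. Concretely, one would then produce the desired Borel map $f : X \to \Lambda$ conjugating $c$ into a cocycle taking values in a finite subgroup $F < \Lambda$.

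The main obstacle lies in the second step, which is the hard direction of Zimmer's theorem. Its standard proof uses the amenability from Step 1 to construct a $c$-equivariant Borel map from $X$ into $\mathrm{Prob}(K)$ for a suitable compact metric $\Lambda$-space $K$, and then applies property (T) --- in its fixed-point characterization on convex compact subsets of reflexive Banach spaces --- to concentrate the image on Dirac masses supported on points whose $\Lambda$-orbit is finite. Such a point yields a $c$-equivariant map into $\Lambda / F$ for a finite subgroup $F$, and this is exactly the datum needed to produce the Borel conjugacy $f : X \to \Lambda$ rewriting $c$ as a cocycle valued in $F$. Rather than re-prove this, in practice one would cite \cite[Lemma 4.16]{GHL22} or the original \cite{SZ91}.
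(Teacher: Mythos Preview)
The paper does not actually prove this proposition: it simply states the result and attributes it to Spatzier--Zimmer \cite{SZ91}, noting that a proof is also given in \cite[Lemma 4.16]{GHL22}. Your proposal ends at the same place, citing exactly these references, so there is nothing to compare; your sketch of the Spatzier--Zimmer strategy is additional exposition beyond what the paper offers.
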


With these results in hand, we are now able to prove the analog for buildings of \cite[Theorem 2]{GHL22}. Compare also with Theorem \ref{theo: morphism superrigidity}.

\begin{theo}\label{theo: cocycle superrigidity}
    Let $\Lambda$ be a countable group acting on a countable set $\m D$. Assume that $\Lambda$ is geometrically rigid with respect to $\m D$.
    Let $\mc X$ be a thick irreducible Euclidean building of higher rank, or a product thereof. Let $\Gamma$ be a lattice in $\mc X$ with property (T). Let $X$ be a standard probability space equipped with an ergodic measure-preserving $\Gamma$-action.
    Then every cocycle $c : \Gamma \times X \to \Lambda$ is cohomologous to a cocycle that takes its values in a subgroup of $\Lambda$ that virtually fixes an element of $\m D$.
\end{theo}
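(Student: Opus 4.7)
The plan is to mirror the strategy of Theorem \ref{theo: morphism superrigidity}, but substituting Proposition \ref{pro: 3 cases cocycle rigidity} for its morphism analogue. Once Theorem \ref{theo: strong bnd & Weyl gp of bdg lattice} is extended to the product setting, $(\Omega,\nu)$ is a strong boundary of $\Gamma$ and $W_\Sigma \leq W_{\Gamma,\Omega}$ contains $w_{\mathrm{flip}}$, so that Proposition \ref{pro: 3 cases cocycle rigidity} applies to the cocycle $c$ with three possible alternatives that I would address separately.

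In the first alternative, $c$ is cohomologous to a cocycle valued in the setwise stabilizer $\Stab_\Lambda(A)$ of a finite subset $A \subset \m D$. The pointwise stabilizer of $A$ is the kernel of the natural homomorphism $\Stab_\Lambda(A) \to \mathfrak S_A$, and hence has finite index in the image of the cocycle while fixing every element of $A$. This directly yields the desired conclusion.

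In the second alternative, there is a $c$-equivariant Borel map $X \to \Delta$ or $X \to \Delta^{(2)}/\mathfrak S_2$. Here I would use that $\Delta^{(2)}/\mathfrak S_2$ inherits a universally amenable $\Lambda$-action from $\Delta$ (via the equivariant factor map from $\Delta^{(2)} \subset \Delta^2$, amenability being preserved under products and equivariant quotients). Since $\Gamma$ has property (T), Proposition \ref{pro: Spatzier-Zimmer} then forces $c$ to be cohomologous to a cocycle taking values in a finite subgroup $F \leq \Lambda$. For any $d \in \m D$, the stabilizer $F_d$ has finite index in $F$, so this cohomologous cocycle takes values in a subgroup virtually fixing a point of $\m D$.

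The third alternative I would exclude exactly as in the morphism case: a morphism $\phi: W_{\Gamma,\Omega} \to \m Z/2\m Z$ with $\phi(w_{\mathrm{flip}}) \ne 0$ and special kernel would restrict to a surjection $W_\Sigma \to \m Z/2\m Z$ whose kernel is a special subgroup of $W_\Sigma$ of index $2$. Proposition \ref{pro: the special subgroups are the special subgroups} identifies the special subgroups of $W_\Sigma$ as the $W_J$ with $J \subseteq \{1,\ldots,n\}$, and in the higher-rank setting each proper $W_J$ has index strictly greater than $2$, a contradiction. The main technical point I expect will require care is the extension of the strong-boundary and Weyl-group machinery of Section \ref{sec: Strong boundaries and Weyl groups} to a product of buildings, where $\Omega$ factors as a product of the factor chamber sets, $W_\Sigma$ factors as a direct product of the factor Weyl groups, and one must verify that special subgroups behave coordinate-wise so that the higher-rank hypothesis in each factor still forbids index-$2$ special subgroups in the product.
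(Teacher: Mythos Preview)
Your proposal is correct and follows essentially the same approach as the paper's own proof: invoke Theorem \ref{theo: strong bnd & Weyl gp of bdg lattice}, apply Proposition \ref{pro: 3 cases cocycle rigidity}, dispose of case (1) via the finite-index pointwise stabilizer, of case (2) via Proposition \ref{pro: Spatzier-Zimmer}, and rule out case (3) using Proposition \ref{pro: the special subgroups are the special subgroups} exactly as in the morphism argument. You are in fact slightly more careful than the paper in flagging the product-building extension and the universal amenability of $\Delta^{(2)}/\mathfrak S_2$; the paper simply invokes the irreducible results without further comment on these points.
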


(See Remark \ref{rem: hyp (T) pas grave} on the Property (T) hypothesis for $\Gamma$.)\bigskip

\begin{proof}
    By Example \ref{theo: strong bnd & Weyl gp of bdg lattice}, $W_\Sigma$ is a generalized Weyl group for the strong boundary $(\Omega,\nu)$ of $\Gamma$. Applying Proposition \ref{pro: 3 cases cocycle rigidity}, there is a Polish space $\Delta$ equipped with a universally amenable $\Lambda$-action such that we are in one of three cases.
    \begin{enumerate}
        \item If $c$ is cohomologous to a cocycle that takes its values in the setwise stabilizer $H$ of a finite subset $A$ of $\m D$, then the kernel of the map $H\to \mathrm{Sym}(A)$ is of finite index in $H$ and fixes a point in $\m D$.
        \item If there exists a $c$-equivariant Borel map, from $X$ to either $\Delta$ or $\Delta^{(2)}/\mathfrak S_2$, then by Proposition \ref{pro: Spatzier-Zimmer}, the cocycle $c$ is cohomologous to a cocycle that takes its values in a finite subgroup of $\Lambda$, so the conclusion clearly holds.
        \item The last possibility never happens, the statement here is the exact same as that of Theorem \ref{theo: morphism superrigidity}, and so the same proof still applies.
     \end{enumerate}
\end{proof}

The results from the previous section with morphisms instead of cocycles still hold with cocycles. We sum everything up in the following corollary. The proofs all work in the exact same way as in \cite{GHL22}. 

Recall that a \emph{cocycle-rigid pair} is a pair $(G,\Lambda)$, where $G$ is any lcsc group and $\Lambda$ is a countable discrete group, if for any standard probability space $X$ endowed with an ergodic pmp $G$-action, any cocycle $G\times X\to\Lambda$ is cohomologous to a cocycle with values in a finite subgroup of $\Lambda$.

\begin{cor}\label{cor: grosse liste pour cocycles}
    Let $\Gamma$ be a lattice in a product of thick irreducible Euclidean buildings of higher rank with property (T), and let $X$ be a standard probability space equipped with an ergodic measure-preserving action of $\Gamma$.
    The following hold:
    \begin{enumerate}
        \item If $\Lambda$ is a group which is hyperbolic relative to a finite collection $\mc P$ of subgroups. Then every cocycle $\Gamma \times X \to \Lambda$ is cohomologous to a cocycle that takes its values in a parabolic or in a finite subgroup of $\Lambda$.
        \item In particular, if $\Lambda$ is hyperbolic, $(\Gamma,\Lambda)$ is cocycle rigid.
        \item Let $\mc G$ be a finite simple graph, and let $A_\mc G$ be the associated right-angled Artin group. Let $X$ be a standard probability space equipped with an ergodic measure-preserving action of $\Gamma$.
        Then every cocycle $\Gamma\times X \to A_\mc G$ is cohomologous to the trivial cocycle.
        \item Let $S$ be a surface of finite type. Then $(\Gamma,\rm{Mod}^*(S))$ is cocycle rigid.
        \item Let $H_1,\ldots ,H_p$ be countable groups, let $F_N$ be a free group of rank $N$, and let
        $G := H_1*\cdots*H_p*F_N$.
        Let $\mc F = \{[H_1],\ldots ,[H_p]\}$.
        
        Assume that
        \begin{itemize}
            \item for each $i \in \{1,\ldots,k\}$, the pair $(\Gamma, H_i/Z(H_i))$ is cocycle-rigid, and
            \item for each $i \in \{1,\ldots,k\}$, the centralizer in $H_i$ of every nontrivial element is amenable.
        \end{itemize}
        Then $(\Gamma,\Out(H,\mc F^{(t)}))$ is cocycle-rigid.
        \item Let $H$ be a torsion-free group which is hyperbolic relative to a finite collection $\mc P$ of finitely generated subgroups. Assume that
        \begin{itemize}
            \item for every $P\in\mc P$, the pair $(\Gamma, P/Z(P))$ is cocycle-rigid, and
            \item for every $P\in \mc P$, the centralizer in $P$ of every nontrivial element is amenable.
        \end{itemize}
        Then $(\Gamma,\Out(H,\mc P^{(t)}))$ is cocycle-rigid.
    \end{enumerate}
\end{cor}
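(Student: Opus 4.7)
The plan is to apply Theorem \ref{theo: cocycle superrigidity} in each case, then exploit the specific structure of the target group $\Lambda$ to promote the conclusion. In every item, the theorem produces a cocycle $c'$ cohomologous to $c$ and taking values in a subgroup $H < \Lambda$ that virtually fixes a point $d \in \m D$; in particular, $H$ has finite orbit on $\m D$ and admits a finite-index subgroup $H_0 < \Stab_\Lambda(d)$. A standard cohomology reduction, using the fact that $H_0$ has finite index in $H$ and the ergodicity of $\Gamma \acts X$, then allows one to take $c'$ with values in $H_0$. What remains is to identify $\Stab_\Lambda(d)$ in each setting and, where the answer is not already small enough, to iterate.

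For (1), stabilizers of vertices in the hyperbolic graph associated to $(\Lambda,\mc P)$ are either finite or conjugate into a subgroup of $\mc P$, which is exactly the desired alternative; (2) is the subcase where $\mc P = \emptyset$, so $\Stab_\Lambda(d)$ is trivial and $H_0$ is forced to be finite. For (3), stabilizers of standard subcomplexes of the Salvetti complex split (up to finite index) as products of a smaller right-angled Artin group $A_{\mc G'}$ and a free abelian group $\m Z^k$; an induction on $|\mc G|$, combined with the fact that property (T) forbids nontrivial cocycles into $\m Z^k$, forces the cocycle to be trivial. For (4), the stabilizer of a vertex in the curve graph is an extension of the mapping class group of a (strictly simpler) cut subsurface by the finitely generated abelian subgroup of Dehn twists along the boundary curves; an induction on the complexity of $S$, using property (T) to kill the abelian part and Proposition \ref{pro: Spatzier-Zimmer} at the base case, concludes.

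The hardest items will be (5) and (6). There, the stabilizer of a proper free factor system inside $\Out(G, \mc F^{(t)})$ or $\Out(H, \mc P^{(t)})$ decomposes as an extension built from smaller outer-automorphism groups on the one hand, and from factors of the form $H_i/Z(H_i)$ or $P/Z(P)$ on the other. The hypotheses in the statement are tailored exactly so that cocycle rigidity is available for each of these factors; an inductive argument (decreasing in the complexity of the free factor system), with property (T) and Proposition \ref{pro: Spatzier-Zimmer} bridging between successive extension steps, then yields cocycle rigidity of $\Out(G, \mc F^{(t)})$ and $\Out(H, \mc P^{(t)})$. This is the main obstacle, and it is carried out exactly as in \cite[Section 9.2]{GHL22}. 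Since all arguments following the application of Theorem \ref{theo: cocycle superrigidity} proceed identically to those in \cite{GHL22} -- the novelty here being only the availability of the superrigidity result for building lattices -- we do not reproduce the details.
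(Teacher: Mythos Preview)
Your proposal is correct and takes the same approach as the paper: apply Theorem \ref{theo: cocycle superrigidity} and then invoke the item-by-item arguments of \cite{GHL22}. The paper's own proof is in fact even terser than yours---it simply states that ``the proofs all work in the exact same way as in \cite{GHL22}''---so your sketch of how the stabilizer structure and the inductive arguments go in each case is a welcome expansion rather than a deviation.
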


\bibliographystyle{alpha}  
\bibliography{biblio}
\end{document}